\newtheorem{theorem}{Theorem}
\newtheorem{corollary}[theorem]{Corollary}
\newtheorem{proposition}[theorem]{Proposition}
\newtheorem{lemma}[theorem]{Lemma}
\theoremstyle{definition}
\newtheorem{definition}[theorem]{Definition}
\theoremstyle{remark}
\newtheorem{remark}[theorem]{Remark}
\crefname{prty}{property}{properties}
\Crefname{prty}{Property}{Properties}
\crefname{assump}{assumption}{assumptions}
\newcommand{\bbR}{\mathbb{R}}
\newcommand{\bbC}{\mathbb{C}}
\newcommand{\bbN}{\mathbb{N}}
\DeclarePairedDelimiter\abs{\lvert}{\rvert}
\DeclarePairedDelimiter\norm{\lVert}{\rVert}
\def\sto{\xrightarrow{\text{s}}} % strong "\to"
\def\srto{\xrightarrow{\text{sr}}} % strong resolvent "\to"
\def\sdto{\xrightarrow{\text{sd}}} % strong dynamical "\to"
\DeclareMathOperator{\ran}{ran}
\DeclareMathOperator{\Id}{Id}
\DeclareMathOperator{\Real}{Re}
\DeclareMathOperator{\Imag}{Im}
\DeclareMathOperator{\spn}{span}
\DeclareMathOperator{\diag}{diag}
\DeclareMathOperator{\divr}{div}
\journal{}
\begin{document}

\begin{frontmatter}

%% Title, authors and addresses

%% use the tnoteref command within \title for footnotes;
%% use the tnotetext command for theassociated footnote;
%% use the fnref command within \author or \affiliation for footnotes;
%% use the fntext command for theassociated footnote;
%% use the corref command within \author for corresponding author footnotes;
%% use the cortext command for theassociated footnote;
%% use the ead command for the email address,
%% and the form \ead[url] for the home page:
%% \title{Title\tnoteref{label1}}
%% \tnotetext[label1]{}
%% \author{Name\corref{cor1}\fnref{label2}}
%% \ead{email address}
%% \ead[url]{home page}
%% \fntext[label2]{}
%% \cortext[cor1]{}
%% \affiliation{organization={},
%%             addressline={},
%%             city={},
%%             postcode={},
%%             state={},
%%             country={}}
%% \fntext[label3]{}

\title{Physics-informed spectral approximation of Koopman operators}

%% use optional labels to link authors explicitly to addresses:
%% \author[label1,label2]{}
%% \affiliation[label1]{organization={},
%%             addressline={},
%%             city={},
%%             postcode={},
%%             state={},
%%             country={}}
%%
%% \affiliation[label2]{organization={},
%%             addressline={},
%%             city={},
%%             postcode={},
%%             state={},
%%             country={}}

\author[a]{Dimitrios Giannakis} %% Author name
\author[b]{Claire Valva} %% Author name

%% Author affiliation
\affiliation[a]{organization={Department of Mathematics, Dartmouth College},
                addressline={29 N.\ Main St.},
                city={Hanover},
                postcode={03755},
                state={New Hampshire},
                country={USA}}
\affiliation[b]{organization={Courant Institute of Mathematical Sciences, New York University},
                addressline={251 Mercer St.},
                city={New York},
                state={New York},
                postcode={10012},
                country={USA}}

%% Abstract
\begin{abstract}
    Koopman operators and transfer operators represent nonlinear dynamics in state space through its induced action on linear spaces of observables and measures, respectively. This framework enables the use of linear operator theory for supervised and unsupervised learning of nonlinear dynamical systems, and has received considerable interest in recent years. Here, we propose a data-driven technique for spectral approximation of Koopman operators of continuous-time, measure-preserving ergodic systems that is asymptotically consistent and makes direct use of known equations of motion (physics). Our approach is based on a bounded transformation of the Koopman generator (an operator implementing directional derivatives of observables along the dynamical flow), followed by smoothing by a Markov semigroup of kernel integral operators. This results in a skew-adjoint, compact operator whose eigendecomposition is expressible as a variational generalized eigenvalue problem. We develop Galerkin methods to solve this eigenvalue problem and study their asymptotic consistency in the large-data limit. A key aspect of these methods is that they are physics-informed, in the sense of making direct use of dynamical vector field information through automatic differentiation of kernel functions. Solutions of the eigenvalue problem reconstruct evolution operators that preserve unitarity of the underlying Koopman group while spectrally converging to it in a suitable limit. In addition, the computed eigenfunctions have representatives in a reproducing kernel Hilbert space, enabling out-of-sample evaluation of learned dynamical features. Numerical experiments performed with this method on integrable and chaotic low-dimensional systems demonstrate its efficacy in extracting dynamically coherent observables under complex dynamics.
\end{abstract}

%%Graphical abstract
% \begin{graphicalabstract}
%\includegraphics{grabs}
% \end{graphicalabstract}

%%Research highlights
\begin{highlights}
\item New, spectrally accurate method to approximate Koopman operators from data.
\item Makes direct use of equations of motion (physics).
\item Extracts dynamically coherent observables under complex dynamics.
\item Approximation can be computed from samples drawn from invariant measure, without need of time-ordered snapshots.
\item Allows out-of-sample evaluation of eigenfunctions.
\end{highlights}

%% Keywords
\begin{keyword}
%% keywords here, in the form: keyword \sep keyword

%% PACS codes here, in the form: \PACS code \sep code

%% MSC codes here, in the form: \MSC code \sep code
%% or \MSC[2008] code \sep code (2000 is the default)
    Koopman operators \sep spectral approximation \sep data-driven techniques \sep kernel methods
\end{keyword}

\end{frontmatter}

%% Add \usepackage{lineno} before \begin{document} and uncomment
%% following line to enable line numbers
%% \linenumbers

%% main text
%%

\section{Introduction}

Dynamical systems can be characterized via an operator-theoretic formulation of ergodic theory through their induced action on linear spaces of observables, realized through Koopman (composition) operators and their duals, the Ruelle-Perron-Frobenius (transfer) operator \cite{Baladi00,EisnerEtAl15}. This framework allows one to translate problems of nonlinear dynamics, to equivalent linear, if infinite dimensional, problems about linear evolution operators which act on observables by composition with the flow map. As such, many relevant problems in the sciences, such as coherent feature extraction or statistical prediction can be formulated as linear problems. Starting from work in the late 1990s and early 2000s \cite{Froyland99,DellnitzJunge99,Mezic05}, the operator-theoretic approach has seen a surge of interest as a framework for data-driven techniques for analysis and modeling of dynamical systems \cite{KlusEtAl18,BruntonEtAl22,OttoRowley21,Colbrook24}. These methods have been successfully applied in diverse scientific domains, including fluid dynamics, climate dynamics, molecular dynamics, energy systems science, and control; see, for example, \cite{Mezic13,FroylandEtAl21,WuEtAl17,SusukiEtAl16,MauroyEtAl20}.

Despite the enticing theoretical properties of Koopman/transfer operators and recent progress in computational methodologies, the design of associated data-driven spectral approximation techniques is challenging. For example, a measure-preserving flow is weak-mixing if and only if the Koopman operator on $L^2$ has a simple eigenvalue at 1, with a constant corresponding eigenfunction, and no other eigenvalues \cite{Walters81}. Many commonly studied systems are rigorously known to be mixing, such as the Lorenz 63 (L63) system \cite{Tucker99, LuzzattoEtAl05}. Further, even if the point spectrum of a dynamical system is nontrivial, it is not guaranteed to be discrete (e.g., the spectrum could possibly be dense in the unit circle). As such, to study systems with high dynamical complexity, numerical methods must be able to stably and consistently approximate the often non-discrete Koopman spectrum.

In scientific applications, it is additionally desirable to be able to use data-driven algorithms in combination with known physics to study a given system. Some popular and successful methods of this type include physics-informed neural networks \cite{RaissiEtAl19} or physics-informed dynamic mode decomposition (piDMD) \cite{BaddooEtAl23} that make use of structural information of a given dynamical system, e.g., enforcing that a fluid be incompressible or that the DMD modes are measure-preserving. Here, we will develop a method in this vein, where we make use of the equations of motion (i.e., known \emph{physics}) from a given dynamical system time to approximate the spectrum of the Koopman generator.

Specifically, we propose a data-driven methodology for the approximation of Koopman and transfer operators in continuous-time measure-preserving ergodic flows that directly uses vector field information of the dynamical system, via the use of automatic differentiation of kernel functions. Our primary focus is the generator $V$ of the unitary Koopman group $U^t = e^{tV}$ of such systems. The generator acts as a generalized derivative (detailed more in \cref{sec:dynamical_system}), i.e., $V f = \lim_{t\to 0} \frac{U^t f - f}{t}$, and as such, if we have vector field data, we can directly compute the action of $V$ upon a given function of sufficient regularity. We combine this observation with a bounded transformation of the Koopman generator and smoothing by kernel integral operators to build a family of skew-adjoint, unbounded operators $V_{z,\tau}$ with compact resolvents, parameterized by $z,\tau>0$. The operators $V_{z,\tau}$ have purely discrete spectra by compactness of their resolvent and will be shown to converge spectrally to $V$ in the iterated limit of $z\to 0^+$ after $\tau \to 0^+$ (see \cref{thm:vztau_conv}). Here, $\tau$ is a smoothing parameter and $z$ will correspond to the smallest frequency scale the algorithm can faithfully resolve. We formulate the computation of eigenvalues and eigenvectors of $V_{z,\tau}$ as a variational generalized eigenvalue problem that admits well-posed Galerkin approximations in a learned basis of kernel eigenfunctions.

The eigendecomposition of $V_{z,\tau}$ then allows for the identification coherent features in a dynamical system through the eigenpairs ($\omega_j, \xi_j$), $V_{z,\tau} \xi_j = e^{i \omega_j t} \xi_j$, where $\xi_j \in L^2$ is the eigenfunction representing a coherent feature that evolves with characteristic frequency $\omega_j \in \mathbb R$. Additionally, each eigenfunction obtained via our scheme has a representative in a reproducing kernel Hilbert space (RKHS) of continuously differentiable functions. This enables out-of-sample evaluation to reconstruct eigenfunction time series at high temporal resolution from coarsely time-sampled (even independent) training data, among other supervised and unsupervised learning tasks.

In previous work, the authors have developed related techniques for spectral approximation of Koopman operators of measure-preserving ergodic flows which are based on compact approximations of the generator \cite{DasEtAl21} or its resolvent \cite{GiannakisValva24}. Hereafter, we collectively reference these papers as DGV. The new contributions of this work, realized through explicit use of of equations of motion, include:
\begin{enumerate}
    \item Approximations of the Koopman generator and operator can be obtained from independent samples drawn from the invariant measure, rather than the time-ordered snapshots commonly used in DMD-type techniques.
    \item Various discretization errors such as finite-difference approximation of the generator \cite{DasEtAl21} or numerical quadrature for approximation of the resolvent \cite{GiannakisValva24} are avoided.
    \item The use of automatic differentiation makes the scheme amenable to implementation using basis functions derived from eigendecomposition of general classes of kernel integral operators, including Markov smoothing operators adapted to the invariant measure.
\end{enumerate}
We demonstrate our approach with numerical applications to low-dimensional dynamical systems exhibiting different types of spectral characteristics of the Koopman operator: an ergodic torus rotation, a Stepanoff flow on the 2-torus, and the L63 system on $\mathbb R^3$. The examples illustrate the ability of our approach to stably compute eigenvalue and eigenfunctions of Koopman operators for systems that have them, and to identify slowly decorrelating observables that behave as approximate Koopman eigenfunctions under mixing dynamics.

The plan of the paper is as follows. In \cref{sec:prelims}, we describe the dynamical system under study, along with our assumptions and function spaces employed in this work. In \cref{sec:scheme,sec:numimpl}, we describe our spectral approximation scheme and its numerical implementation, respectively. \Cref{sec:examples} presents our numerical experiments. \Cref{sec:conclusions} contains our primary conclusions and outlook on future work. Auxiliary technical results are collected in \ref{app:aux_results}. \ref{app:vbkernel} summarizes the construction of variable-bandwidth kernels used in our numerical experiments.

\section{Preliminaries and assumptions}
\label{sec:prelims}

\subsection{Dynamical system}
\label{sec:dynamical_system}

Our setup and notation follows closely DGV. In particular, we consider a continuous-time, continuous flow $\Phi^t \colon \mathcal M \to \mathcal M$, $t \in \mathbb R$, on a metric space $\mathcal M$, possessing an ergodic invariant Borel probability measure $\mu$ with compact support $X\subseteq \mathcal M$. We assume that there is a compact $C^1$ manifold $M \subseteq \mathcal M$ which is forward-invariant (i.e., $\Phi^t(M) \subseteq M$ for any $t \geq 0$) and contains $X$. These assumptions are met by many dynamical systems encountered in applications, including classes of measure-preserving flows on compact manifolds ($X = M = \mathcal M$), dissipative flows on manifolds with attractors ($ X \subseteq M \subseteq \mathcal M$, with $X$ being an attractor and $M$ an absorbing set), and infinite-dimensional systems governed by dissipative partial differential equations with inertial manifolds.

The flow $\Phi^t$ induces a strongly-continuous group of unitary Koopman operators $U^t \colon H \to H$ that act on observables in the Hilbert space $H=L^2(\mu)$ by composition, $U^t f = f \circ \Phi^t$ \cite{Koopman31,KoopmanVonNeumann32}. Moreover, the adjoint of the Koopman operator $U^{t*} \equiv U^{-t}$ is the transfer operator that governs the evolution of densities of measures in $H$ under the dynamics. By Stone's theorem on strongly continuous, one-parameter unitary groups \cite{Stone32}, the Koopman group $\{ U^t \}_{t \in \bbR}$ has a skew-adjoint generator $V\colon D(V) \to H$, defined on a dense subspace $D(V) \subseteq H$ by means of the norm limit
\begin{equation*}
V f = \lim_{t\to 0} \frac{U^t f - f}{t}.
\end{equation*}
The generator $V$ completely characterizes the Koopman group, in the sense that if two strongly continuous, one-parameter unitary groups have the same generator $V$ they are identical \cite{Schmudgen12}. Moreover, $V$ reconstructs the Koopman operator at any time $t \in \mathbb R$ by exponentiation, $U^t = e^{tV}$, defined via the Borel functional calculus.

When acting on elements with continuously differentiable representatives in $C^1(M)$, $V$ reduces to a directional derivative along the vector field $\vec V \colon M \to TM$ that generates $\Phi^t$. Specifically, we have
\begin{equation}
    \label{eq:v_grad}
    V \iota f = \iota \vec V \cdot \nabla f, \quad f \in C^1(M),
\end{equation}
where $\iota \colon C(M) \to H$ is the map that sends continuous functions on $M$ to their corresponding equivalence classes in $H$, and the vector field $\vec V$ satisfies
\begin{equation}
    \label{eq:v_phi}
    \frac{d\ }{dt} \Phi^t(x) = \vec V(\Phi^t(x)), \quad x \in M.
\end{equation}
 In other words, $\vec V$ represents the equations of motion (``physics'') governing the flow.
 In this work, we assume that the vector field $\vec V$ is known to us in the form of governing equations for the initial-value problem~\eqref{eq:v_phi}.

In the following, we write $\langle f, g\rangle = \int_X f^* g \, d\mu$ and $\norm{f}_H = \langle f, f\rangle$ for the inner product and norm of $H$, respectively, where the inner product is taken conjugate-linear in the first argument. Moreover, $\bm 1\colon \mathcal M \to \mathbb R$ is the function on $\mathcal M$ equal to 1 everywhere. We will denote the space of bounded linear maps between Banach spaces $\mathbb E$ and $\mathbb F$ as $B(\mathbb E, \mathbb F)$, and $\lVert A\rVert$ will be the operator norm of $A \in B(\mathbb E, \mathbb F)$. We will use the abbreviation $B(\mathbb E) \equiv B(\mathbb E, \mathbb E)$. Given an operator $A\colon D(A) \to \mathbb E$ defined on a subspace $D(A) \subseteq \mathbb E$, $\rho(A)$ and $\sigma(A)$ will denote the resolvent set and spectrum of $A$, respectively. For a complex number $z \in \rho(A)$, $R(z, A) \in B(\mathbb E)$ will be the corresponding resolvent operator, $R(z, A) = (z - V)^{-1}$. Note that for $z \in \rho(A)$, $z - V$ is a surjective operator.

\subsection{Spectral decomposition}
\label{sec:spec_decomp}

Our objective is to approximate the generator $V$ from \cref{sec:dynamical_system} by a family of operators whose spectral properties are amenable to numerical approximation, while also providing dynamically relevant feature extraction. Importantly, we seek that our approximations are physics-informed, in the sense of making use of the known dynamical vector field $\vec V$ via~\eqref{eq:v_grad}. In this subsection, we give a brief outline of results from spectral theory that underpin our constructions. For detailed expositions of these topics see, e.g., \cite{Baladi00,EisnerEtAl15,Oliveira09,Schmudgen12}.

By the spectral theorem for skew-adjoint operators, the spectrum $\sigma(V)$ of the generator $V$ is a closed subset of the imaginary line, and there exists a projection-valued measure (PVM) $E\colon \mathcal{B}(i\bbR) \to B(H)$ on the Borel $\sigma$-algebra $\mathcal B(i \mathbb R)$ of the imaginary line that decomposes the generator and Koopman operators via the spectral integrals
\begin{equation}
    \label{eq:spec_decomp}
    V = \int_{i\bbR} i\omega \, dE(i\omega), \quad U^t = \int_{i\bbR} e^{i \omega t} \, dE(i\omega).
\end{equation}
Moreover, there is a $U^t$-invariant splitting $H = H_p \oplus H_c$ into orthogonal subspaces $H_p$ and $H_c$ such that the spectral measure $E_p \colon \mathcal B(i \mathbb R) \to B(H_p)$ of $V \rvert_{H_p} $ is discrete and the spectral measure  $E_c \colon \mathcal B(i \mathbb R) \to B(H_c)$ of $V \rvert_{H_c}$ is purely continuous (i.e., has no atoms), yielding the decomposition $E = E_p \oplus E_c$.

The atoms of $E_p$ are singleton sets $ \{ i\omega_j \}$ containing the eigenvalues of $V$,
\begin{displaymath}
    V \xi_j = i \omega_j \xi_j, \quad \omega_j \in \mathbb R, \quad \xi_j \in H_p \setminus \{ 0 \},
\end{displaymath}
and the corresponding eigenfunctions $\xi_j$ form an orthonormal basis of $H_p$. By~\eqref{eq:spec_decomp}, we have $U^t \xi_j = e^{i\omega_j t} \xi_j$, so $\xi_j$ has periodic evolution with period $2 \pi / \omega_j$. We therefore interpret the real numbers $\omega_j$ as eigenfrequencies of the dynamical system. It can be shown that, by ergodicity, the eigenvalues of $V$ are all simple, and, by definition of $V$, come in complex-conjugate pairs. We can therefore choose a $\mathbb Z$-based indexing such that $\omega_0 = 1$, $\xi_0 = \bm 1$, and $\omega_{-j} = - \omega_j$, $\xi_{-j} = - \xi_j^* $ for $j \neq 0$.  It can further be shown that the eigenfunctions $\xi_j$ lie in $L^\infty(\mu)$ with $\lvert \xi_j\rvert = 1 $ $\mu$-a.e.\ when $\xi_j$ is normalized to unit $L^2(\mu)$ norm, and they form a multiplicative group, $\xi_j \xi_k = \xi_l$, with the eigenfrequencies forming an additive group, $\omega_j + \omega_k = \omega_l$.

Based on the above, a collection $ \xi_{j_1}, \ldots, \xi_{j_d} $ of eigenfunctions induces a feature map $F \colon \mathcal M \to \mathbb C^d$, where $F(x) = (\xi_{j_1}(x), \ldots, \xi_{j_d}(x))$ $\mu$-a.e.\ and the range of $F$ is a subset of the $d$-dimensional torus $\mathbb T^d \subset \mathbb C^d$. A key property of this feature map is that it intertwines the dynamics on $\mathcal M$ with a rotation system $R^t \colon \mathbb T^d \to \mathbb T^d$ parameterized by the eigenfrequencies corresponding to $\xi_{j_1}, \ldots, \xi_{j_d}$; that is, we have $ R^t \circ F(x) = F \circ \Phi^t(x) $, $\mu$-a.e., where $R^t(\theta) = \theta + \alpha t \mod 2 \pi $ and $\alpha = (\omega_{j_1}, \ldots, \omega_{j_d})$. Such a map $F$ is called a measure-theoretic semiconjugacy, and it is called a topological semiconjugacy if the functions $\xi_{j_1}, \ldots, \xi_{j_d}$ are continuous. In addition, the existence of the Koopman eigenfunction basis of $H_p$ leads to a closed-form expression for the dynamical evolution of observables in this space,
\begin{equation}
    \label{eq:koopman_evo_hp}
    U^t f = \sum_j \langle \xi_j, f\rangle e^{i\omega_j t} \xi_j, \quad \forall f \in H_p,
\end{equation}
which can be used to build predictive models when coupled with methods for estimating the expansion coefficients $\langle \xi_j, f\rangle$. Defining the cross-correlation function of two observables $f,g \in H$ as $C_{fg}(t) = \langle f, U^t g\rangle $, it follows from~\eqref{eq:koopman_evo_hp} that $C_{fg}(t) = \sum_j \langle g, \xi_j\rangle e^{i\omega_j t} \langle \xi_j, f\rangle$ has a quasiperiodic evolution that does not decay to 0 as $t \to \infty$ whenever $f, g \in H_p$. This suggests the possibility of long-term predictability of observables in this space.

In contrast to the structured nature of the Koopman spectra and evolution of observables in $H_p$, observables in the continuous spectrum subspace $H_c$ are characterized by a decay of correlations which is characteristic of weak-mixing dynamics, $ \lim_{T\to\infty} \int_0^T \lvert C_{fg}(t)\rvert \, dt/ T =0 $ for all $ f \in H$ and $g \in H_c$. In particular, there is no basis of $H_c$ compatible with the evolution of observables analogously to the Koopman eigenfunction basis of $H_p$, and this poses a significant methodological obstacle when dealing with observables in $H$ that are not well-approximated by elements of $H_p$. Indeed, by fundamental results from ergodic theory, the flow $\Phi^t$ is weak-mixing if and only if $H_p$ is the one-dimensional subspace of $H$ spanned by constant functions. If we consider mixing a hallmark of high dynamical complexity in a measure-theoretic sense, this suggests that in many real-world problems feature extraction and prediction based purely on Koopman eigenfunctions in $H$ will perform poorly.

To overcome these obstacles, considerable efforts have been made in recent years to develop techniques capable of consistently approximating the continuous spectra of unitary Koopman operators induced from measure-preserving dynamics; see, e.g., \cite{OttoRowley21,Colbrook24} for surveys. Building on DGV, the approach taken in this paper will be to approximate $V$ by a family of skew-adjoint operators that are diagonalizable on the entire Hilbert space $H$, and converge to $V$ in a spectral sense. In the next subsection, we lay out the notions of spectral convergence employed for that purpose.

\subsection{Spectral convergence}

We will use the following notions of the convergence of skew-adjoint operators \cite{Chatelin11,Oliveira09}.
\begin{definition}[Convergence of skew-adjoint operators] Let $A\colon D(A) \to \mathbb H$ be a skew-adjoint operator on a Hilbert space $\mathbb H$ and $A_{\tau} \colon D(A_{\tau}) \to \mathbb H$ a family of skew-adjoint operators indexed by $\tau>0$.
  \begin{enumerate}[(i)]
      \item The family $A_\tau$ is said to converge in strong resolvent sense to $A$ as $\tau\to 0^+$, denoted $A_\tau \srto A$, if for some (and thus, every) $z \in \mathbb C \setminus i \mathbb R$ the resolvents $R(z, A_\tau)$ converge strongly to $R(z, A)$; that is, $\lim_{\tau\to 0^+}R(z, A_\tau) f = R(z, A) f$ for every $f\in \mathbb H$.
      \item The family $A_\tau$ is said to converge in strong dynamical sense to $A$ as $\tau\to 0^+$, denoted $A_\tau \sdto  A$ if, for every $t \in \mathbb R$, the unitary operators $e^{tA_\tau}$ converge strongly to $e^{tA}$; that is, $\lim_{\tau\to0^+} e^{tA_\tau} f = e^{tA} f$ for every $f\in \mathbb H$.
  \end{enumerate}
  \label{def:src}
\end{definition}

It can be shown, \cite[e.g.,][Proposition~10.1.8]{Oliveira09}, that strong resolvent convergence and strong dynamical convergence are equivalent notions. For our purposes, this implies that if a family of skew-adjoint operators converges to the Koopman generator $V$ in strong resolvent sense, the unitary evolution groups generated by these operators consistently approximate the Koopman group generated by $V$.

Strong resolvent convergence and strong dynamical convergence imply the following form of spectral convergence \cite[e.g.,][Proposition~13]{DasEtAl21}.

\begin{theorem}
    With the notation of \cref{def:src}, let $\tilde E\colon \mathcal B(i \mathbb R) \to B(\mathbb H)$ and $\tilde E_\tau \colon \mathcal B(i \mathbb R) \to B(\mathbb H)$ be the spectral measures of $A$ and $A_\tau$, respectively, i.e., $A = \int_{i \mathbb R} \lambda\, d\tilde E(\lambda)$ and $A_\tau = \int_{i \mathbb R} \lambda\, d\tilde E_\tau(\lambda)$. Then, the following hold under strong resolvent convergence of $A_\tau$ to $A$.
\begin{enumerate}[(i)]
        \item For every element $\lambda \in \sigma(A)$ of the spectrum of $A$, there exists a sequence $\tau_1, \tau_2, \ldots \searrow 0$ and elements $\lambda_n \in \sigma(A_{\tau_n})$ of the spectra of $A_{\tau_n}$  such that $\lim_{n\to \infty} \lambda_n = \lambda$.
        \item For every bounded continuous function $h\colon i \mathbb R \to \mathbb C$, as $\tau\to 0^+$ the operators $h(A_\tau) = \int_{i \mathbb R} h(\lambda)\,d\tilde E_\tau(\lambda)$ converge strongly to $h(A) = \int_{i \mathbb R} h(\lambda) \,d\tilde E(\lambda)$.
        \item For every bounded Borel-measurable set $\Theta \in \mathcal B(i \mathbb R)$ such that $\tilde E(\partial \Theta) = 0$ (i.e., the boundary of $\Theta$ does not contain eigenvalues of $A_\tau$), as $\tau\to 0^+$ the projections $\tilde E_\tau(\Theta)$ converge strongly to $\tilde E(\Theta)$.
    \end{enumerate}
    \label{thm:spec-conv}
\end{theorem}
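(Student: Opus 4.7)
The plan is to treat the three claims in order, with the bulk of the work in part (ii). A key device is to pass from operator statements to statements about the scalar spectral measures $\mu_{\tau,f}(\cdot) = \langle f, \tilde E_\tau(\cdot) f\rangle$ and $\mu_f(\cdot) = \langle f, \tilde E(\cdot) f\rangle$, which are finite positive Borel measures on $i\mathbb R$ of common total mass $\lVert f\rVert^2$, and to exploit standard Portmanteau-type facts for weak convergence of measures. Part (i) then follows by contradiction: if no approximating sequence existed, there would be $\epsilon > 0$ and $\tau_0 > 0$ with $\mathrm{dist}(\lambda, \sigma(A_\tau)) \geq \epsilon$ for all $\tau \in (0, \tau_0)$. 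Choosing $z = \delta + \lambda$ with $0 < \delta < \epsilon/2$ so that $1/\delta > 2/\epsilon$, skew-adjointness yields $\lVert R_z(A_\tau)\rVert = 1/\mathrm{dist}(z, \sigma(A_\tau)) \leq 2/\epsilon$, while $\lVert R_z(A)\rVert \geq 1/\lvert z - \lambda\rvert = 1/\delta$ because $\lambda \in \sigma(A)$. This contradicts the bound $\lVert R_z(A)\rVert \leq \sup_\tau \lVert R_z(A_\tau)\rVert$ supplied by strong resolvent convergence $R_z(A_\tau) \to R_z(A)$ together with the uniform boundedness principle.

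Part (ii) splits into two stages. Stage one handles $h \in C_0(i\mathbb R)$: fix $z \in \mathbb C \setminus i\mathbb R$ and observe that the $*$-algebra $\mathcal A$ generated by $R_z, R_{\bar z}$ separates points of $i\mathbb R$, vanishes nowhere, and lies in $C_0(i\mathbb R)$, so $\mathcal A$ is uniformly dense in $C_0(i\mathbb R)$ by Stone--Weierstrass. Every polynomial in $R_z, R_{\bar z}$ converges strongly on $\mathbb H$ by uniform boundedness of the resolvents plus strong resolvent convergence; combined with the spectral bound $\lVert h(A_\tau)\rVert \leq \lVert h\rVert_\infty$ and a $3\epsilon$-argument, this yields $h(A_\tau) f \to h(A) f$ for every $f \in \mathbb H$ and every $h \in C_0(i\mathbb R)$.

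Stage two extends to $h \in C_b(i\mathbb R)$ through the measures $\mu_{\tau,f}$. Stage one gives $\int g \, d\mu_{\tau,f} \to \int g \, d\mu_f$ for every $g \in C_0(i\mathbb R)$, which upgrades to weak convergence of finite measures because total masses coincide. I would then deduce tightness of $\{\mu_{\tau,f}\}_{\tau < \tau_0}$: given $\epsilon > 0$, choose a compact $K \subset i\mathbb R$ with $\mu_f(K) > \lVert f\rVert^2 - \epsilon$ and $\chi \in C_c(i\mathbb R)$ with $\chi \equiv 1$ on $K$ and compact support $L$; then $\mu_{\tau,f}(L^c) \leq \lVert f\rVert^2 - \int \chi \, d\mu_{\tau,f} \to \lVert f\rVert^2 - \int \chi \, d\mu_f < \epsilon$. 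Tightness promotes integral convergence from $C_0$ to $C_b$, which in particular gives weak operator convergence $h(A_\tau) \to h(A)$. Strong convergence for $h \in C_b(i\mathbb R)$ then follows from
\begin{equation*}
\lVert h(A_\tau) f - h(A) f\rVert^2 = \lVert h(A_\tau) f\rVert^2 + \lVert h(A) f\rVert^2 - 2 \Real \langle h(A_\tau) f, h(A) f\rangle,
\end{equation*}
where the first two terms converge to $\lVert h(A) f\rVert^2$ by applying $C_b$-convergence to $\lvert h\rvert^2$, while the cross term converges to the same value by applying weak operator convergence $\bar h(A_\tau) \to \bar h(A)$ to the fixed vector $h(A) f$.

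Part (iii) is then a sandwich argument: approximate $1_\Theta$ from above and below by $h_\pm \in C_b(i\mathbb R)$ with $h_- \leq 1_\Theta \leq h_+$ and $\int (h_+ - h_-) \, d\mu_f < \epsilon$, which exist by regularity of $\mu_f$ together with the continuity-set hypothesis $\mu_f(\partial \Theta) = 0$ (a consequence of $\tilde E(\partial \Theta) = 0$). Operator monotonicity of the functional calculus gives $h_-(A_\tau) \leq \tilde E_\tau(\Theta) \leq h_+(A_\tau)$, so (ii) applied to $h_\pm$ combined with the same norm-squared identity yields $\tilde E_\tau(\Theta) f \to \tilde E(\Theta) f$ strongly. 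The main obstacle I anticipate is the tightness/upgrade step in (ii): Stone--Weierstrass delivers convergence only on $C_0$, and pushing to $C_b$ while preserving the strong (rather than merely weak) operator topology requires both the tightness argument for the scalar spectral measures and the norm-squared identity above. Once in place, (i) and (iii) become short consequences of classical weak-convergence machinery.
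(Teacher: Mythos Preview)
Your proof is correct and self-contained, but it is worth noting that the paper does not actually prove this theorem: it states the result and attributes it to Proposition~13 of \cite{DasEtAl21} (the reference abbreviated DGV). So there is no ``paper's own proof'' to compare against; you have supplied what the authors chose to cite.

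A few minor remarks on your argument. In part~(i), invoking the uniform boundedness principle is a slight misattribution: what you really use is lower semicontinuity of the operator norm under strong convergence, namely that $R_z(A_\tau)f \to R_z(A)f$ for every $f$ together with $\lVert R_z(A_\tau)\rVert \le 2/\epsilon$ forces $\lVert R_z(A)\rVert \le 2/\epsilon$. The contradiction with $\lVert R_z(A)\rVert \ge 1/\delta > 2/\epsilon$ then goes through. In part~(ii), stage one, the polynomial algebra in $R_z$ and $R_{\bar z}$ for a single fixed $z$ is not automatically closed under complex conjugation on $i\mathbb R$ (the conjugate of $\lambda \mapsto (z-\lambda)^{-1}$ on $i\mathbb R$ is $\lambda \mapsto -(-\bar z - \lambda)^{-1}$, which involves a resolvent at $-\bar z$ rather than $\bar z$). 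The clean fix is to use that strong resolvent convergence at one $z\in\mathbb C\setminus i\mathbb R$ implies it at every such $z$, and then take the $*$-algebra generated by all $R_w$ with $w\notin i\mathbb R$; this algebra is manifestly conjugation-closed, and Stone--Weierstrass applies. With that adjustment the remainder of your argument---tightness of the scalar spectral measures, the upgrade from $C_0$ to $C_b$, and the norm-squared identity for promoting weak to strong operator convergence---is standard and correct. Part~(iii) is fine as written; the key observation that $\lVert \tilde E_\tau(\Theta)f\rVert^2 = \langle f,\tilde E_\tau(\Theta)f\rangle$ for projections makes the passage from weak to strong convergence immediate once the sandwich bounds are in place.
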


\subsection{Markov smoothing operators}
\label{sec:markov}

We will regularize the generator $V$ by pre- and post-composing it with kernel integral operators $G_\tau$ that converge strongly to the identity as $\tau \to 0^+$. These operators are induced from a family of kernel functions $p_\tau\colon \mathcal M \times \mathcal M \to \mathbb R$, $\tau > 0$, with the following properties.

% We will regularize the generator $V$ by pre- and post-composing it with kernel integral operators. These operators are induced from a family of kernel functions $p_\tau\colon \mathcal M \times \mathcal M \to \mathbb R$, $\tau > 0$, with the following properties.
\begin{enumerate}[label=(K\arabic*)]
    \item \label[prty]{prty:K1} $p_\tau$ is measurable, and it is continuously differentiable on $M\times M$.
    \item \label[prty]{prty:K2} $p_\tau$ is Markovian with respect to $\mu$; i.e., $p_\tau \geq 0$ and for every $x\in \mathcal M $ the normalization condition $\int_X p_\tau(x,\cdot)\,d\mu = 1$ holds.
\end{enumerate}
For every such kernel function $p_\tau$, we let $G_\tau \colon H \to H$ be the corresponding integral operator on $H$ with
\begin{equation}
    \label{eq:g_tau}
    G_\tau f = \int_X p_\tau(\cdot, x)f(x) \, d\mu(x).
\end{equation}
We then require that the following hold.
\begin{enumerate}[label=(K\arabic*),resume]
    \item \label[prty]{prty:K3} The family $\{G_\tau \colon H \to H\}_{\tau > 0} \cup \{ G_0 \colon= \Id \}$ forms a strongly continuous semigroup; i.e., $G_\tau G_{\tau'} = G_{\tau+\tau'}$ for every $\tau,\tau' \geq 0$ and $\lim_{\tau\to 0^+} G_\tau f = f$ for every $f \in H$.
    \item \label[prty]{prty:K4} $G_\tau$ is strictly positive; i.e., $\langle f, G_\tau f \rangle > 0$ for every nonzero $f \in H$.
    \item \label[prty]{prty:K5} $G_\tau$ is ergodic; i.e., $G_\tau f = f$ for all $\tau > 0$ iff $f$ is constant $\mu$-a.e.
\end{enumerate}
Note that every such kernel $p_\tau$ is necessarily symmetric, $p_\tau(x, y) = p_\tau(y, x)$, and the corresponding integral operator $G_\tau$ is self-adjoint and of trace class.

Let $k\colon \mathcal M \times \mathcal M \to \mathbb R_+$ be a positive measurable kernel function that is continuously differentiable and integrally strictly positive definite on $M \times M$. The latter requirement means \cite{SriperumbudurEtAl11} that for every finite Borel measure $\nu$ on $\mathcal M$ with support contained in $M$ we have
\begin{equation}
    \label{eq:intregrally_pos_def}
    \int_M \int_M k(x, y) \, d\nu(x)\, d\nu(y) > 0.
\end{equation}

Following DGV, we use such a kernel $k$ to build the family $p_\tau$ by first constructing a symmetric Markov kernel $p\colon \mathcal M \times \mathcal M \to \mathbb R$ by means of a bistochastic kernel normalization procedure proposed by \cite{CoifmanHirn13}, and adapted for the purposes of this work. The normalization procedure involves computing the continuous functions $d, q \in C^1(M)$ and the asymmetric kernel function $\hat k\colon \mathcal M \times \mathcal M \to \mathbb R$ defined as
\begin{equation}
    \label{eq:k_asym}
    d = \int_M k(\cdot, x) \, d\mu(x), \quad q = \int_M \frac{k(\cdot, x)}{d(x)} \, d\mu(x), \quad \hat k(x, y) = \frac{k(x,y)}{d(x) q^{1/2}(y)}.
\end{equation}
We then set
\begin{equation}
    \label{eq:p_kernel}
    p(x, y) = \int_X \hat k(x, z ) \hat k(z, y) \, d\mu(z).
\end{equation}
One can readily verify that $p(x, y) > 0$, $p(x, y) = p(y, x)$, and $\int_X p(x, \cdot) \, d\mu = 1$ for every $x, y \in M$; that is, $p$ is a strictly positive Markovian kernel. Moreover, by compactness of $X$, the restriction of $p$ on $M \times M$ is $C^1$.

Consider now the integral operator $\hat K \colon H \to C^1(M)$, where
\begin{displaymath}
    \hat K f = \int_M \hat k(\cdot, x) f(x) \, d\mu(x),
\end{displaymath}
and define $\tilde K \colon H \to H$ and $G \colon H \to H$ as $\tilde K = \iota \hat K$, $G = \tilde K \tilde K^*$. Observe that $G$ is a kernel integral operator with kernel $p$,
\begin{displaymath}
    G f = \int_X p(\cdot, x) f(x) \, d\mu(x),
\end{displaymath}
and by Markovianity and continuity of $p$, $G$ is a self-adjoint Markov operator of trace class. Moreover, by~\eqref{eq:intregrally_pos_def}, $G$ is a strictly positive operator, and thus admits an eigendecomposition
\begin{displaymath}
    G \phi_j = \lambda_j \phi_j,
\end{displaymath}
where $j \in \mathbb N_0$, the $\phi_j$ form an orthonormal basis of $H$, and the eigenvalues $\lambda_j$ are strictly positive and have finite multiplicities. By convention, we order the eigenvalues $\lambda_j$ in decreasing order, so that $\lambda_0 = 1$ by Markovianity. It also follows by strict positivity of $k$ and compactness of $X$ that the Markov process generated by $G$ is ergodic. This means that $\lambda_0$ is a simple eigenvalue, and the corresponding eigenfunction $\phi_0$ can be chosen as being $\mu$-a.e.\ equal to 1.

For the purposes of the physics-informed approximation schemes studied in this paper, it is important that (i) the eigenfunctions $\phi_j$ have representatives in $C^1(M)$; and (ii) directional derivatives of these representatives can be computed by taking derivatives of the kernel functions $\hat k$. First, observe that an eigendecomposition of $G$ can be obtained from a singular value decomposition (SVD) of the compact operator $\tilde K$,
\begin{displaymath}
    \tilde K = \sum_{j=0}^\infty \phi_j \sigma_j \langle \gamma_j, \cdot \rangle,
\end{displaymath}
where $\phi_j$ (resp.\ $\gamma_j$) are left (resp.\ right) singular vectors forming an orthonormal basis of $H$, and $\sigma_j = \lambda_j^{1/2}$ are strictly positive singular values. One finds that the functions
\begin{equation}
    \label{eq:varphi}
    \varphi_j = \frac{1}{\sigma_j} \hat K \gamma_j
\end{equation}
are  representatives of $\phi_j$ in $C^1(M)$; that is, $\phi_j = \iota \varphi_j$ which implies that $\phi_j$ lies in $D(V)$. As a result, using~\eqref{eq:v_grad}, we can compute the action of the generator on the eigenbasis as
\begin{equation}
    \label{eq:vgrad_phi}
    V \phi_j = \iota \vec V \cdot \nabla \varphi_j \equiv \tilde K' \gamma_j,
\end{equation}
where $\tilde K' = \iota \hat K'$ and $\hat K' \colon H \to C(M)$ is the integral operator
\begin{displaymath}
    \hat K' f = \int_X \hat k'(\cdot, y) f(y) \, d\mu(y), \quad \hat k'(\cdot, y) = \vec V \cdot \nabla \hat k(\cdot, y).
\end{displaymath}
We therefore see that $V \phi_j$ can be computed from the right singular vectors $\gamma_j$ so long as the directional derivatives $\hat k'(\cdot, y)$ of the kernel sections $\hat k(\cdot, y)$ along $\vec V$ are known. For later convenience, we set $\phi_j' = V \phi_j$ and $\varphi_j' = \vec V \cdot \nabla \varphi_j$.

Finally, to build the Markov semigroup $G_\tau$, we define the self-adjoint operator $\Delta\colon D(\Delta) \to H$ on the dense domain $D(\Delta) = \{ f \in H: \sum_{j=1}^\infty \eta_j^2 \lvert \langle \phi_j, f\rangle\rvert^2 < \infty \} $ via the eigendecomposition
\begin{displaymath}
    \Delta \phi_j = \eta_j \phi_j, \quad \eta_j = \frac{\lambda_j^{-1} -1}{\lambda_1^{-1} -1}.
\end{displaymath}
Intuitively, we think of $\Delta$ as a Laplace-type operator, and we will use it as the generator of our Markov semigroup,
\begin{displaymath}
    G_\tau \colon= e^{-\tau \Delta}, \quad \tau \geq 0.
\end{displaymath}
In \cite[Theorem~1]{DasEtAl21}, it is shown that for each $\tau>0$, $G_\tau$ is a Markovian kernel integral operator of the form~\eqref{eq:g_tau}, with kernel $p_\tau \in C^1(M\times M)$ given by a Mercer sum,
\begin{displaymath}
    p_\tau(x, y) = \sum_{j=0}^\infty \lambda_{j,\tau} \varphi_j(x) \varphi_j(y), \quad \lambda_{j,\tau} = e^{-\tau \eta_{j,\tau}},
\end{displaymath}
that converges in $C^1(M\times M)$ norm. In particular, the kernels $p_\tau$ satisfy all the requisite \crefrange{prty:K1}{prty:K5} listed above. Moreover, building the semigroup $G_\tau $ from the generator $\Delta$ means that the eigenvalues/eigenvectors of $G_\tau$ can be directly obtained from eigenvalues/eigenvectors of $G$. This will be useful in \cref{sec:numimpl} for developing (data-driven) Galerkin methods.

\subsection{Reproducing kernel Hilbert spaces}
\label{sec:rkhs}

We outline a few results from RKHS theory used in our spectral approximation scheme. Further details on these constructions and terminology can be found in \cite{DasEtAl21}. Comprehensive expositions on RKHSs are available, e.g., in \cite{PaulsenRaghupathi16,SteinwartChristmann08}.

\begin{lemma}
    \label{lem:pos_def}
    Let $\kappa\colon M \times M \to \mathbb R$ be a positive-definite, Borel-measurable kernel function. Then, for every finite Borel measure $\nu$ on $M$ with (compact) support $X_\nu$, the kernel $\kappa_2\colon M \times M \to \mathbb R$ defined as $\kappa_2(x,y) = \int_M \kappa(x, z) \kappa(z, y) \, d\nu(z)$ is positive definite. Moreover, if $\kappa$ is strictly positive-definite on $X_\nu$ then so is $\kappa_2$.
\end{lemma}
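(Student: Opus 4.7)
The plan is to prove the two halves separately, with the positive-definiteness of $\kappa_2$ being an exercise in Fubini and the strict half hinging on a support/continuity argument.

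For the non-strict part, I would fix a finite collection of points $x_1,\ldots,x_n \in M$ and scalars $c_1,\ldots,c_n \in \mathbb C$ and compute the Gram sum directly. Since $\nu$ is finite, $\kappa$ is Borel, and the sum is finite, Fubini--Tonelli lets me exchange the sum and the integral:
\[
\sum_{i,j=1}^n c_i^* c_j \kappa_2(x_i,x_j) = \int_M \sum_{i,j=1}^n c_i^* c_j \, \kappa(x_i,z)\kappa(z,x_j)\,d\nu(z).
\]
Because $\kappa$ is positive-definite it is real and symmetric, so the integrand factorises as $|g(z)|^2$ with $g(z) = \sum_{i=1}^n c_i\,\kappa(x_i,z)$. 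The integrand is pointwise nonnegative, and integrating yields $\sum_{i,j} c_i^* c_j \kappa_2(x_i,x_j) \geq 0$, which is the definition of positive-definiteness for $\kappa_2$.

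For the strict half, I assume $x_1,\ldots,x_n$ are distinct points of $X_\nu$, not all $c_i$ zero, and aim for a contradiction from $\sum_{i,j} c_i^* c_j \kappa_2(x_i,x_j) = 0$. By the identity above, this forces $g(z) = 0$ for $\nu$-almost every $z \in M$. The natural next step is to evaluate $g$ at the points $x_k \in X_\nu$ themselves, which would give the linear system $\sum_i c_i \kappa(x_i,x_k) = 0$; multiplying by $c_k^*$, summing over $k$, and using strict positive-definiteness of $\kappa$ on $X_\nu$ would then force all $c_i = 0$. Passing from ``$\nu$-a.e.\ zero'' to ``zero on $X_\nu$'' is exactly the main obstacle, since a priori the null set could contain the $x_k$. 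The cleanest way to close the gap is to observe that the sections $\kappa(x_i,\cdot)$ in the applications of interest are continuous (recall the kernel $\hat k$ of \cref{sec:markov} is $C^1$ on $M\times M$), so $g$ has a continuous representative; a function that vanishes $\nu$-almost everywhere and is continuous must vanish on the topological support $X_\nu$, and in particular at each $x_k$.

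An alternative, more abstract route goes through the canonical feature map $\Phi\colon M \to \mathcal H_\kappa$ of the RKHS of $\kappa$, in which $\kappa_2(x,y) = \langle \Phi(x), T\Phi(y)\rangle_{\mathcal H_\kappa}$ where $T = \int_M \Phi(z)\otimes\Phi(z)\,d\nu(z)$ is the covariance operator associated with $\nu$. Positivity of $T$ instantly gives positive-definiteness of $\kappa_2$, while the quadratic form $\sum c_i^* c_j \kappa_2(x_i,x_j) = \|T^{1/2}\sum_i c_i\Phi(x_i)\|^2$ vanishes iff $\sum_i c_i \Phi(x_i) \in \ker T$. The characterisation $\ker T = \{v : \langle \Phi(\cdot), v\rangle = 0\ \nu\text{-a.e.}\}$ reduces the strict case to exactly the same ``null-set versus support'' question as above, so either formulation routes through the same obstruction. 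I expect the support/continuity step to be where care is needed; the remaining computations are essentially bookkeeping.
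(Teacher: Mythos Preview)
The paper does not supply its own argument for this lemma; it simply defers to lemma~12 of \cite{DasEtAl21}. Your proposal is therefore not comparable to an in-paper proof, but it is worth assessing on its own.

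Your non-strict half is correct and is the standard computation: the Fubini step and the factorisation $\sum_{i,j} c_i^* c_j \kappa(x_i,z)\kappa(z,x_j) = \lvert g(z)\rvert^2$ go through under the stated hypotheses (finite $\nu$, Borel-measurable symmetric $\kappa$, finite sum).

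For the strict half you have correctly located the only nontrivial point. The passage from ``$g=0$ $\nu$-a.e.'' to ``$g(x_k)=0$ for each $x_k\in X_\nu$'' is not available under the lemma's stated hypotheses, since $\kappa$ is assumed only Borel-measurable, not continuous. Your fix---invoking continuity of the kernel sections---is exactly what is used in the paper's application (the kernels $\tilde k$ and $p$ immediately following the lemma are $C^1$ on $M\times M$), so your argument is adequate for every use the paper makes of the result. But as a proof of the lemma \emph{as stated} it has a gap you have honestly flagged rather than closed; the RKHS/covariance-operator reformulation you sketch does, as you note, run into the identical obstruction. If you want to match the generality of the statement you would need either to add continuity as a hypothesis (which is how the referenced lemma in \cite{DasEtAl21} is effectively used) or to supply a separate argument for measurable kernels, and the latter does not appear to be straightforward.
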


\begin{proof}
    See \cite[Lemma~12]{DasEtAl21}.
\end{proof}

Next, observe that the kernel~$p$ from~\eqref{eq:p_kernel} can be expressed as
\begin{displaymath}
    p(x, y) = \int_M \frac{\tilde k(x,z) \tilde k(z, y)}{\tilde d(x) \tilde d(z)} \, d\mu(z), \quad \tilde k(x,y) = \frac{k(x,y)}{q^{1/2}(x) q^{1/2}(y)}, \quad \tilde d(x) = \frac{d(x)}{q^{1/2}(x)}.
\end{displaymath}
By strict positivity of the kernel $k$, $\tilde d$ and $q$ are strictly positive, continuous functions, and thus bounded away from zero on the compact set $M$. As a result, since $k$ is integrally strictly positive so is $\tilde k$. Moreover, by \cref{lem:pos_def}, $(x,y) \mapsto \int_M \tilde k(x, z) \tilde k(z, y) \, d\mu(z)$ is positive-definite on $M$ and strictly positive-definite on $X_\nu$, and $p$ inherits these properties by strict positivity of $\tilde d$. Similarly to $p_\tau$, $p$ has a Mercer expansion,
\begin{displaymath}
    p(x,y) = \sum_{j=0}^\infty \lambda_j \varphi_j(x) \varphi_j(y),
\end{displaymath}
which converges in $C^1(M \times M)$ norm.

Let $\mathcal H \subset C^1(M)$ be the RKHS on $M$ with reproducing kernel $p$. We denote the inner product of $\mathcal H$ by $\langle \cdot, \cdot \rangle_{\mathcal H}$. For a subset $S \subseteq M$ we define the Hilbert subspace $\mathcal H(S) \subseteq \mathcal H$ as $\mathcal H(S) = \overline{\spn\{ p(x, \cdot) : x \in S\}}^{\lVert \cdot\rVert_{\mathcal H}}$. This space can be identified with the restriction $\mathcal H \rvert_S \subseteq C(S)$ of $\mathcal H$ onto $S$. By strict positivity of the $\lambda_j$, $\mathcal H(X)$ can thus be identified with a dense subspace of $C(X)$. Moreover, $H^1 := \iota \mathcal H$ is a dense subspace of $H$ and the embedding $\mathcal H(X) \hookrightarrow H$ is compact. The subspace $H^1$ can be equivalently characterized by a decay condition of expansion coefficients of elements of $H$ with respect to the $\phi_j$ basis,
\begin{displaymath}
    H^1 = \left\{ f \in H: \sum_{j=0}^\infty \frac{\lvert \langle \phi_j, f\rangle\rvert^2}{\lambda_j} < \infty \right\},
\end{displaymath}
and becomes a Hilbert space for the inner product $\langle f, g\rangle_{H^1} = \sum_{j=0}^\infty \lambda_j^{-1} \langle f, \phi_j\rangle \langle \phi_j, g\rangle$. Note that, as a vector space, $H^1$ is equal to the domain of $G^{-1/2}$, and
\begin{displaymath}
    \lVert f \rVert_{H^1} = \lVert G^{-1/2} f\rVert_H \geq \lVert f\rVert_H
\end{displaymath}
holds since $G$ is a contraction. Moreover, $H^1$ is a subspace of $D(V)$ and, by exponential decay of $\lambda_{j,\tau}$ with increasing $\eta_j$, contains $\ran G_\tau$ as a subspace for all $\tau>0$. In particular, $G^{-1/2} G_\tau$ is a bounded operator on $H$ for every $\tau>0$.

Using standard RKHS constructions (see, e.g., \cite[Section~4.1]{DasEtAl21}), it can be shown that $P\colon H \to \mathcal H$ with
\begin{displaymath}
    P f = \int_X p(\cdot, x)f(x) \, d\mu(x)
\end{displaymath}
is a well-defined, Hilbert--Schmidt operator, whose range is a dense subspace of $\mathcal H(X)$. Moreover, the adjoint $P^*\colon \mathcal H \to H$ maps functions in $\mathcal H$ to their corresponding equivalence classes in $H$ (i.e., $P^* f = \iota f)$), leads to the factorization $G = P^*P$ of the Markov integral operator $G$ from \cref{sec:markov}. Using these facts, we can deduce the following polar decomposition of $P$,
\begin{equation}
    \label{eq:polar_decomp_p}
    P = T G^{1/2},
\end{equation}
where $T\colon H \to \mathcal H$ is an isometry with $\ran T = \mathcal H(X)$. Moreover, the functions $\psi_j := \lambda_j^{1/2} \varphi_j $, $j \in \mathbb N_0$, form an orthonormal basis of $\mathcal H(X)$, while $\{\phi^{(1)}_j := \iota \psi_j\}_{j \in \mathbb N_0}$ is an orthonormal basis of $H^1$. By \eqref{eq:polar_decomp_p}, we have $P^* T = G^{1/2}$. This implies that
\begin{equation}
    \label{eq:rkhs_unitary}
    T f = \sum_{j=0}^\infty \langle \phi_j, f\rangle \psi_j,
\end{equation}
and thus that the $L^2$ element $ \iota Tf = P^* T f$ has basis expansion $\iota T f  = \sum_{j=0}^\infty \lambda_j^{1/2} \langle \phi_j, f \rangle \phi_j$. Since $\lambda_j < 1$ for all $j \in \mathbb N$, we can interpret $T$ as a smoothing operator even though it is unitary as a map from $H$ to $\mathcal H(X)$. Later on, we will make use of a truncated version $T_{L'} \colon H \to \mathcal H$ of $T$, defined for $L' \in \mathbb N_0$ as

\begin{equation}
    \label{eq:rkhs_partial_iso}
    T_{L'} f = \sum_{j=0}^{L'} \langle \phi_j, f\rangle \psi_j.
\end{equation}
These operators are partial isometries that converge strongly to $T$ as $L' \to \infty$.

Next, the Nystr\"om operator $\mathcal N\colon D(\mathcal N) \to \mathcal H$ is an unbounded operator with dense domain $D(\mathcal N) = H^1 \subseteq H$ and range $\mathcal H(X)$ that maps elements in its domain to their representatives in $\mathcal H(X)$ according to the formula \begin{equation}
    \label{eq:nystrom}
    \mathcal N f = \sum_{j=0}^\infty \frac{\langle \phi_j, f \rangle}{\lambda_j^{1/2}} \psi_j.
\end{equation}
This induces in turn a Dirichlet energy functional $\mathcal E \colon H^1 \to \mathbb R$ that provides a measure of average spatial variability of elements in its domain with respect to the RKHS $\mathcal H$,
\begin{displaymath}
    \label{eq:dirichlete}
    \mathcal E(f) = \frac{\lVert \mathcal N f\rVert_{\mathcal H}^2}{\lVert f\rVert_H^2} - 1.
\end{displaymath}
Note that $\mathcal E(f) \geq 0$ with equality iff $f$ is constant $\mu$-a.e.

\begin{remark}
    When implemented using the pullback of a kernel $k^{(Y)}$ on data space $Y$ under a map $F \colon \mathcal M \to Y$, the isometric embedding~\eqref{eq:rkhs_unitary} and Nystr\"om extension~\eqref{eq:nystrom} are pullbacks of everywhere-defined functions on $Y$, i.e., $T f = g \circ F$ and $\mathcal N f = h \circ F$ with $g,h \colon Y \to \mathbb R$. In the case of the bistochastic kernel construction described in \cref{sec:markov}, these functions are given by
    \begin{displaymath}
        g = \sum_{j=0}^\infty \langle \phi_j, f\rangle \psi_j^{(Y)}, \quad
        h = \sum_{j=0}^\infty \frac{\langle \phi_j, f\rangle}{\sigma_j} \psi_j^{(Y)},
    \end{displaymath}
    where $\psi_j^{(Y)} = \int_M \hat k^{(Y)}(\cdot, F(x)) \gamma_j(x) \, d\mu(x)$. Here, $\hat k^{(Y)} \colon Y \times Y \to \mathbb R$ is the kernel on data space obtained by normalization of $k^{(Y)}$ using analogous steps to~\eqref{eq:k_asym}.
    \label{rk:nystrom_pullback}
\end{remark}

To verify well-posedness of our spectral approximation scheme, we will also need the space of distributions $H^{-1} := (H^1)^*$ given by the continuous dual of $H^1$ as a subspace of $H$. Since $H$ is a self-dual space, it can be identified with a subspace of $H^{-1}$, leading to the triple
\begin{displaymath}
    H^1 \subset H \subset H^{-1}.
\end{displaymath}
By Hilbert space duality we also have that $H^{-1}$ is a Hilbert space with orthonormal basis $\{ \phi^{(-1)}_j := \lambda_j^{-1/2} \phi_j \} $. In particular, the action of a distribution $\alpha = \sum_{j=0}^\infty a_j \phi^{(-1)}_j \in H^{-1}$ on an observable $f = \sum_{j=0}^\infty c_j \phi_j^{(1)} \in H^1 $ is given by $\alpha f = \sum_{j=0}^\infty a_j c_j$. It follows from the fact that $\lambda_j$ decays to 0 as $j \to \infty$ with no accumulation points other than 0 that the embeddings $H^1 \hookrightarrow H$ and $H \hookrightarrow H^{-1}$ are both compact.

\section{Spectral approximation scheme}
\label{sec:scheme}

In what follows, we will build a family of densely-defined operators $V_{z,\tau}\colon D(V_{z,\tau}) \to H$, $D(V_{z,\tau}) \subseteq H$, parameterized by $z,\tau > 0$ with the following properties.
\begin{enumerate}[label=(P\arabic*)]
    \item \label[prty]{prty:p1} $V_{z,\tau}$ is skew-adjoint.
    \item \label[prty]{prty:p2} $V_{z,\tau}$ has compact resolvent.
    \item \label[prty]{prty:p3} $V_{z,\tau}$ is real, $(V_{z,\tau} f)^* = V_{z,\tau}(f^*)$ for all $f\in D(V)$.
    \item \label[prty]{prty:p4} $V_{z,\tau}$ annihilates constant functions, $V_{z,\tau} \bm 1 = 0$.
    \item \label[prty]{prty:p5} $V_{z,\tau} \srto V$ in the iterated limits $z \to 0^+$ after $\tau \to 0^+$.
\end{enumerate}
In the above, $\tau$ is the kernel smoothing parameter introduced in \cref{sec:markov}, used here to effect compactification of the resolvent. Moreover, as we will see momentarily, the parameter $z$ can be intuitively thought of as controlling the approximate ``invertibility'' of a bounded transformation applied to $V$ prior to compactification.

Let $\tilde H = \{ f \in H: \langle \bm 1, f\rangle = 0 \}$ be the closed subspace of $H$ consisting of zero-mean functions with respect to the invariant measure $\mu$. Let also $\Pi_0= \langle \bm 1, \cdot \rangle_H$ and $\tilde \Pi = \Id - \Pi_0$, where $\tilde\Pi$ is the orthogonal projection onto $\tilde H$. Since $V \bm 1 = 0$, $\tilde H$ is an invariant subspace of $V$. To construct $V_{z,\tau}$, we begin by considering a bounded, skew-adjoint transformation of the generator, obtained from the bounded, continuous, antisymmetric function $q_z\colon i\bbR \to \bbC$,
\begin{equation}
    \label{eq:qz}
    q_z(i\omega) = \frac{i\omega}{z^2 + \omega^2}, \quad \ran q_z = i\left[-\frac{1}{2z}, \frac{1}{2z}\right],
\end{equation}
via the Borel functional calculus:
\begin{displaymath}
    Q_z := q_z(V\rvert_{\tilde H}) \equiv R_z^* V R_z, \quad Q_z \in B(\tilde H), \quad Q_z^* = - Q_z,
\end{displaymath}
where $R_z = R(z, V)$. Note that above (and in what follows when convenient) we consider $V$ and $R_z$ as operators on $\tilde H$.

The function $q_z$ is plotted in \cref{fig:qz}.
While it is not invertible, its restriction $\tilde{q}_z$ on the subset $i\Omega_z := i(-\infty, -z] \cup i[z, \infty)$ of the imaginary line \textit{is} invertible, with inverse $\tilde{q}_z^{-1}\colon i[-(2z)^{-1}, (2z)^{-1}] \setminus \{0\} \to i\bbR$,
\begin{equation}
    \label{eq:qzinverse}
    \tilde{q}_z^{-1}(i\omega) = i \frac{1 + \sqrt{1 - 4z^2\omega^2}}{2\omega}\, .
\end{equation}
As a result, $\tilde V_z := \tilde{q}_z^{-1}(Q_z)$ is a skew-adjoint operator with dense domain $D(\tilde V_z) \subset \tilde H$ that reconstructs the generator $V$ on the spectral domain $\Omega_z$. We extend $\tilde V_z$ to the full Hilbert space $H$ by defining $V_z = \tilde \Pi \tilde V_z \tilde \Pi$. The value of $z$ controls the approximate invertibility of the function $q_z$, as values in $i(-z, z) \setminus \{ 0 \}$ will be mapped to the to the ``wrong'' value under $\tilde q_z^{-1}$. This intuition informs the $z \to 0^+$ limit, which is made precise in \cref{thm:vz_conv} below. This result is central to our scheme as it allows approximating the generator in a spectral sense using the bounded, skew-adjoint operators $Q_z$.

\begin{figure}
    \centering
    \includegraphics[width=0.5\linewidth, draft=false]{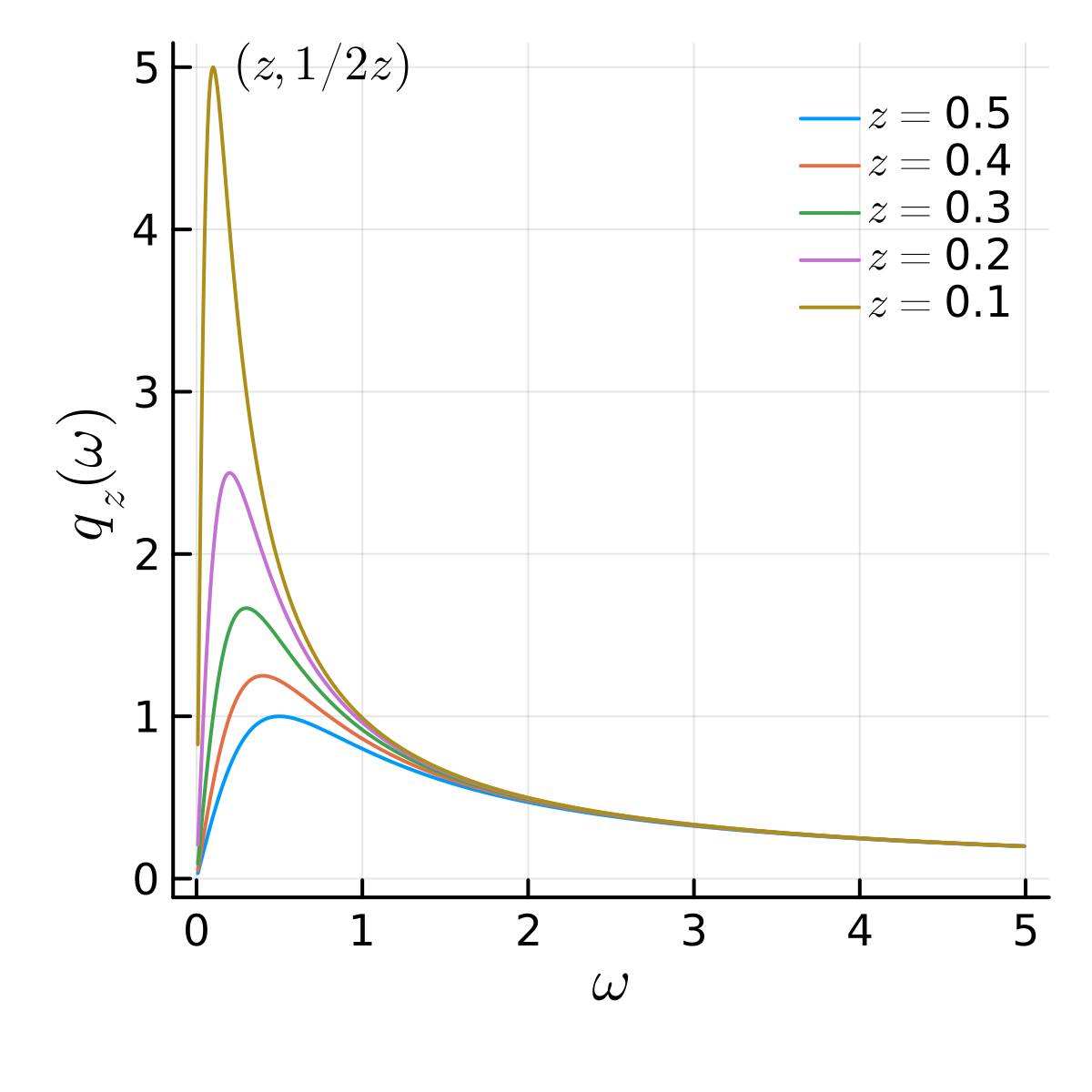}
    \caption{Graphs of the transformation function $q_z$ for the generator for representative values of $z$.}
    \label{fig:qz}
\end{figure}

\begin{theorem}
    As $z \to 0^+$, $V_z$ converges to $V$ in strong resolvent sense. As a result, it converges to $V$ in strong dynamical sense, $e^{t V_z} \sdto U^t$, and it converges spectrally in the sense of \cref{thm:spec-conv}.
    \label{thm:vz_conv}
\end{theorem}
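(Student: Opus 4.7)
The plan is to establish strong resolvent convergence $V_z \srto V$ as $z \to 0^+$; strong dynamical convergence then follows automatically from the equivalence noted after \cref{def:src}, and the claimed spectral convergence is immediate from \cref{thm:spec-conv}. First I would reduce to the invariant subspace $\tilde H$: since $V\bm 1 = V_z\bm 1 = 0$, the resolvents agree on $\spn\{\bm 1\}$, so it suffices to show strong convergence of the resolvents on $\tilde H$. Working on $\tilde H$ is convenient because ergodicity of $\Phi^t$ implies that $0$ is not an eigenvalue of $V\rvert_{\tilde H}$, i.e., the spectral measure $\tilde E \colon \mathcal B(i\bbR) \to B(\tilde H)$ of $V\rvert_{\tilde H}$ satisfies $\tilde E(\{0\}) = 0$.

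The core observation is that $V_z\rvert_{\tilde H}$ is itself a Borel functional-calculus transform of $V\rvert_{\tilde H}$. By the composition law for the Borel functional calculus applied to $Q_z = q_z(V)$, one has $\tilde V_z = (\tilde q_z^{-1} \circ q_z)(V) =: h_z(V)$. A short algebraic check using~\eqref{eq:qz} and~\eqref{eq:qzinverse} shows that $h_z(i\omega) = i\omega$ whenever $|\omega| \geq z$, and that $h_z$ maps the band $0 < |\omega| < z$ into $\Omega_z$ (so $\lvert h_z(i\omega)\rvert \geq z$ there). Consequently, by the spectral theorem, for any $\lambda \in \bbC \setminus i\bbR$ (I would take $\lambda = 1$) and every $f \in \tilde H$,
\begin{align*}
    \lVert R_\lambda(V_z) f - R_\lambda(V) f \rVert_H^2 = \int_{\{|\omega| < z\}} \left\lvert \frac{1}{\lambda - h_z(i\omega)} - \frac{1}{\lambda - i\omega}\right\rvert^2 d\langle f, \tilde E(i\omega) f\rangle,
\end{align*}
the integration being restricted to the band $\{|\omega|<z\}$ because the integrand vanishes elsewhere by the first part of the $h_z$ calculation.

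Since $h_z$ takes purely imaginary values, $\lvert \lambda - h_z(i\omega)\rvert \geq \lvert \Real \lambda\rvert$ and analogously for $\lvert \lambda - i\omega\rvert$, so the integrand is dominated by $4/(\Real\lambda)^2$ on the finite measure $\langle f, \tilde E(\cdot) f\rangle$. As the sets $\{|\omega|<z\}$ decrease to $\{0\}$ and $\tilde E(\{0\}) = 0$, dominated convergence yields the desired vanishing on $\tilde H$. The main obstacle I anticipate is justifying the composition step $\tilde q_z^{-1}(q_z(V)) = (\tilde q_z^{-1}\circ q_z)(V)$ with due care: $\tilde q_z^{-1}$ is singular at $0$, so $h_z$ is genuinely only defined on $i\bbR \setminus \{0\}$. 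This complication is precisely neutralized by the ergodicity input $\tilde E(\{0\}) = 0$, which makes the value of $h_z$ at $0$ irrelevant for the operator $h_z(V)$; once this point is nailed down, everything else is a clean spectral-theorem calculation.
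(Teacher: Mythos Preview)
Your proposal is correct and follows essentially the same route as the paper: both arguments pass to $\tilde H$, write $\tilde V_z$ via the Borel functional calculus as $(\tilde q_z^{-1}\circ q_z)(V)$, observe that this agrees with the identity on the spectral region $|\omega|\ge z$, and then control the resolvent difference over the shrinking band $|\omega|<z$. Your execution is in fact tidier---you make the role of ergodicity (namely $\tilde E(\{0\})=0$) explicit and use dominated convergence directly, whereas the paper bounds the error integral by hand and leaves the vanishing of the spectral measure at $0$ implicit.
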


\begin{proof}
    Recall the spectral resolution of $V$ in~\eqref{eq:spec_decomp}, $V = \int_{i\bbR} i \omega\, dE(i\omega)$, and define the PVM $\tilde E\colon \mathcal B(i \mathbb R) \to B(\tilde H)$ as $\tilde E = \tilde \Pi \circ E$. Note that $\tilde{E}$ does not have an atom at 0 (i.e., $\tilde{E}(\{0 \}) = 0$) and $V\rvert_{\tilde H} = \int_{i\mathbb R} i\omega \, d\tilde E(i\omega)$. For $y >0$, we also define the resolvent function $r_y\colon i\bbR \to \bbC$ as $r_y(i\omega) = 1/(y -i \omega)$, so that $R(y, A) = r_y(A)$ for any skew-adjoint operator $A$.

    We then have that
    \begin{displaymath}
        R(y, \tilde{V}_z) = r_y(\tilde{V}_z) = r_y(\tilde{q}^{-1}_z( q_z(V\rvert_{\tilde H})) = \int_{i \bbR} r_y(\tilde{q}^{-1}_z (q_z(i\omega)) \, d\tilde{E}(i\omega),
    \end{displaymath}
    giving
    \begin{align}
        \nonumber R(y, \tilde{V}_z) &= \int_{i \Omega_z} r_y(\tilde{q}^{-1}_z (q_z(i\omega)) \, d \tilde{E}(i\omega) + \int_{-z}^z r_y(\tilde{q}^{-1}_z (q_z(i\omega)) \, d \tilde{E}(i\omega) \\
        \nonumber &= \int_{i \Omega_z} r_y(i\omega) \, d \tilde{E}(i\omega) + \int_{-z}^z r_y(i\omega) \, d \tilde{E}(i\omega) - \int_{-z}^z r_y(i\omega) \, d \tilde{E}(i\omega) + \int_{-z}^z r_y(\tilde{q}^{-1}_z (q_z(i\omega)) \, d \tilde{E}(i\omega) \\
                  &= R(y, V\rvert_{\tilde H}) - \int_{-z}^z r_y(i\omega) \, d \tilde{E}(i\omega) + \int_{-z}^z r_y(\tilde{q}^{-1}_z (q_z(i\omega)) \, d \tilde{E}(i\omega) \label{eq:Rterms}
    \end{align}
    We will prove the strong resolvent convergence $\tilde{V}_z \srto V\rvert_{\tilde H}$ by showing that the two rightmost terms of \eqref{eq:Rterms} converge strongly to zero on $\tilde H$.

    For $\omega \in (-z, z) \setminus \{ 0 \}$,
    \begin{equation*}
        \tilde{q}^{-1}_z(q_z(i\omega)) = \tilde{q}^{-1}_z\left(\frac{i\omega}{z^4 + \omega^2}\right) = iz^2 / \omega,
    \end{equation*}
    so we have
    \begin{equation*}
        \abs{r_y(\tilde{q}^{-1}_z (q_z(i\omega))} = \left\lvert\frac{1}{y - z^2 / \omega}\right\rvert = \left(y^2 + \frac{z^2}{\omega^2}\right)^{-1/2} \leq \frac{1}{y}.
    \end{equation*}
    Similarly, $\abs{r_y(i\omega)} < 1/y$.

    Next, for $f \in \tilde H$, let $\tilde E_f \colon \mathcal B(i \mathbb R) \to \mathbb R_+$ be the positive finite Borel measure given by $\tilde E_f(\Omega) = \langle f, \tilde E(\Omega) f\rangle$. Using the above upper bounds for $\lvert r_y(\tilde{q}^{-1}_z(q_z(i\omega))) \rvert$ and $\abs{r_y(i\omega)}$ in combination with \cref{lem:bound} from \ref{app:bound}, we get
    \begin{align*}
       \lVert R(y, \tilde{V}_z)f - R(y, V)f \rVert_H^2 &= \int_{-iz}^{iz} |\tilde{q}^{-1}_z (q_z(i\omega)) - r_y(i\omega)|^2 \, d \tilde E_f(i\omega) \\
                                                                &= \int_{(-iz, iz) \setminus \{ 0 \}} \left\lvert\tilde{q}^{-1}_z (q_z(i\omega)) - r_y(i\omega)\right\rvert^2 \, d \tilde E_f(i\omega) \\
                                                                &\leq \frac{4}{y^2} \int_{(-iz, iz) \setminus \{ 0 \}} \, d\tilde E_f(i\omega),
   \end{align*}
    where we have used the fact that $\tilde E(\{0\}) = 0$ to obtain the second equality. It follows from upper continuity of finite measures that for every sequence $z_1, z_2, \ldots$ of strictly positive numbers decreasing to 0,
    \begin{displaymath}
        \limsup_{n \to \infty}\lVert R(y, \tilde{V}_{z_n})f - R(y, V)f \rVert_H^2 \leq \frac{4}{y^2} \tilde E_f\left(\bigcap_{n>0} (-iz_n, iz_n) \setminus \{ 0 \}\right) = \frac{4}{y^2} \tilde E_f(\emptyset) = 0,
    \end{displaymath}
    proving that $\tilde V_z \srto V$ on $\tilde H$. We can then conclude that $V_z \srto V$ on $H$ by noting that $R(y, V\rvert_{\tilde H}) - R(y, V_z) = (R(y, V) - R(y, \tilde V_z)) \tilde \Pi$.
\end{proof}

Next, let  $b_z\colon i\bbR \to i\bbR$ be any continuous extension of $\tilde{q}_z^{-1}$ to the entire imaginary line. For a choice of kernels $p_\tau\colon \mathcal M \times \mathcal M \to \mathbb R$ satisfying \crefrange{prty:K1}{prty:K5}, we approximate $Q_z$ by the compact operators $Q_{z,\tau}\colon H \to H$, $\tau > 0$, where
\begin{equation}
    \label{eq:qz_op}
    Q_{z,\tau} = R_z^* V_\tau R_z, \quad V_\tau = G_{\tau / 2} V G_{\tau / 2}.
\end{equation}
Define the unbounded, skew-adjoint operator $V_{z,\tau}\colon D(V_{z,\tau}) \to H$ as
\begin{displaymath}
    V_{z,\tau} = \tilde V_{z,\tau} \tilde \Pi, \quad \tilde V_{z,\tau} = b_z(Q_{z,\tau}).
\end{displaymath}
Our main spectral approximation result is as follows.
\begin{theorem}
    For $z, \tau > 0$ the operators $V_{z,\tau}$ satisfy \crefrange{prty:p1}{prty:p5}. As such, in the iterated limit $z \to 0^+$, $\tau \to 0^+$, $V_{z, \tau}$ converges spectrally to $V$ in the sense of \cref{thm:spec-conv}.
    \label{thm:vztau_conv}
\end{theorem}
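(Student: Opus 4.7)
My plan is to verify properties \crefrange{prty:p1}{prty:p4} by direct computation, then establish the iterated-limit convergence in \cref{prty:p5} via a two-stage argument, and finally invoke \cref{thm:spec-conv} to obtain the spectral convergence. The key structural observation is that $G_{\tau/2}$ is a self-adjoint, trace-class Markov operator whose range lies in $H^1 \subseteq D(V)$, so by the closed graph theorem $V G_{\tau/2}$ is bounded, making $V_\tau = G_{\tau/2} V G_{\tau/2}$ bounded and skew-adjoint. Hence $Q_{z,\tau} = R_z^* V_\tau R_z$ is bounded skew-adjoint, and $V_{z,\tau} = b_z(Q_{z,\tau})$ is skew-adjoint via the Borel functional calculus since $b_z$ is $i\bbR$-valued, giving \cref{prty:p1}. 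For \cref{prty:p3}, each of $V$, $G_{\tau/2}$, and $R_z$ commutes with complex conjugation $J$ ($\vec V$ is real, $p_{\tau/2}$ has a real kernel, $z$ is real), hence so does $Q_{z,\tau}$; choosing the natural odd extension of $\tilde q_z^{-1}$ for $b_z$ then ensures $V_{z,\tau} J = J V_{z,\tau}$. For \cref{prty:p4}, the identities $V\bm 1 = 0$, $G_{\tau/2} \bm 1 = \bm 1$, and $R_z \bm 1 = z^{-1}\bm 1$ combine to give $Q_{z,\tau} \bm 1 = 0$, and taking $b_z(0) = 0$ in the extension then yields $V_{z,\tau} \bm 1 = 0$.

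For \cref{prty:p2}, the trace-class property of $G_{\tau/2}$ together with boundedness of $V G_{\tau/2}$ makes $V_\tau$ compact, and hence so is $Q_{z,\tau}$. Being compact and skew-adjoint, $Q_{z,\tau}$ has purely discrete spectrum $\{i\mu_j\}$ with $\mu_j \to 0$. The explicit inverse $\tilde q_z^{-1}(i\mu) = i(1+\sqrt{1-4z^2\mu^2})/(2\mu)$ diverges as $\mu \to 0$, so the eigenvalues $b_z(i\mu_j)$ of $V_{z,\tau}$ (on the orthogonal complement of the constants) accumulate at $\pm i\infty$, establishing compact resolvent.

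The substantive step is \cref{prty:p5}. With $z > 0$ fixed, the first stage is to establish $V_{z,\tau} \srto V_z$ as $\tau \to 0^+$, where $V_z$ is the operator of \cref{thm:vz_conv}. Writing $V_z = b_z(Q_z)$ and $R_w(V_{z,\tau}) = r_w(Q_{z,\tau})$ with $r_w(\zeta) = (w - b_z(\zeta))^{-1}$ for $w \in \bbC \setminus i\bbR$, the function $r_w$ is bounded and continuous on $i\bbR$ (with $r_w(0) = 0$ from the blow-up of $b_z$ at the origin). By continuity of the bounded Borel functional calculus on strongly convergent bounded families of self-adjoint operators, it therefore suffices to show $Q_{z,\tau} \to Q_z$ in strong operator topology. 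Using $Q_{z,\tau} - Q_z = R_z^* (V_\tau - V) R_z$ and setting $g = R_z f \in D(V)$ for arbitrary $f \in H$, I would decompose
\begin{displaymath}
    (V_\tau - V) g = G_{\tau/2} V(G_{\tau/2} - I) g + (G_{\tau/2} - I) V g.
\end{displaymath}
The second summand vanishes strongly by \cref{prty:K3}. The first summand is the principal obstacle: since $V$ and $G_{\tau/2}$ do not commute, bare semigroup continuity is not enough. The natural strategy is to use the Mercer expansion of $G_{\tau/2}$ together with formula~\eqref{eq:vgrad_phi} to control $V G_{\tau/2}$ on the dense subspace $H^1$, exploiting the exponential decay of the eigenvalues $\lambda_{j,\tau}$ to upgrade strong semigroup convergence to convergence of $G_{\tau/2} g \to g$ in the graph norm of $V$ on $H^1$, and then extending by density and uniform boundedness of the intervening operators. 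Once this strong resolvent convergence $V_{z,\tau} \srto V_z$ is in hand, combining with \cref{thm:vz_conv} (which gives $V_z \srto V$ as $z \to 0^+$) yields the iterated limit in \cref{prty:p5}; the spectral convergence assertion then follows directly from \cref{thm:spec-conv}.
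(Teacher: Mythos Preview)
Your approach is essentially the same as the paper's: verify \crefrange{prty:p1}{prty:p4} directly from the construction, then obtain \cref{prty:p5} by the two-stage argument ($V_{z,\tau}\srto V_z$ as $\tau\to 0^+$ from the semigroup continuity \cref{prty:K3}, followed by $V_z\srto V$ as $z\to 0^+$ from \cref{thm:vz_conv}), and finish by invoking \cref{thm:spec-conv}. Your proposal is in fact considerably more detailed than the paper's one-paragraph sketch, which simply asserts that $V_{z,\tau}\srto V_z$ follows from $G_\tau f\to f$ without isolating the term $G_{\tau/2}V(G_{\tau/2}-I)g$ that you correctly flag as the nontrivial step (since $V$ is unbounded and does not commute with $G_{\tau/2}$).

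One caution on your proposed resolution of that term: the density-plus-uniform-boundedness extension from $H^1$ to all of $D(V)$ is not automatic. To pass from convergence on a dense set to convergence on every $g\in D(V)$ you need a $\tau$-uniform bound on $g\mapsto V(G_{\tau/2}-I)g$ as a map $D(V)\to H$, which in turn requires controlling $\lVert VG_{\tau/2}h\rVert_H$ by $\lVert h\rVert_{H_V}$ uniformly in $\tau$; this does not follow from the closed graph argument alone (that only gives boundedness of $VG_{\tau/2}$ on $H$ for each fixed $\tau$, with norm typically blowing up as $\tau\to 0^+$). The paper does not address this point either, so you are not omitting anything the paper supplies---but if you intend a self-contained proof you will need to sharpen this step.
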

\begin{proof}
\Crefrange{prty:p1}{prty:p4} follow from the construction of $V_{z, \tau}$. Property \ref{prty:p5} follows from the fact that $\lim_{\tau \to 0} G_\tau f = f$ for $f \in H$ (\cref{prty:K3}), so we have $V_{z,\tau} \srto V_z$ as $\tau \to 0^+$ and the subsequent application of \cref{thm:vz_conv}.
\end{proof}

Consider now an eigendecomposition of the compact, skew-adjoint operator $Q_{z,\tau}$,
\begin{equation}
    \label{eq:eigQ}
    Q_{z,\tau} \xi_{j, z, \tau} = \beta_{j, z, \tau} \xi_{j, z, \tau}, \quad \beta_{j, z, \tau} \in i\bbR, \quad \xi_{j,z,\tau} \in \tilde H \setminus \{ 0 \},
\end{equation}
where the eigenpairs are indexed by $j \in \bbN_0$ and $\{\xi_{j, z, \tau}\}_{j \in \bbN}$ is an orthonormal basis of $\tilde H$. Since $Q_{z,\tau}$ is a real operator (i.e., $(Q_{z,\tau} f)^* = Q_{z,\tau}(f^*)$), it follows that the eigenvectors $\xi_{j,z,\tau}$ corresponding to nonzero $\beta_{j,z,\tau}$ come in complex-conjugate pairs.

Applying $b_z$ to the eigenvalues of $Q_{z,\tau}$ leads to an eigendecomposition of $\tilde V_{z,\tau}$:
\begin{displaymath}
    \tilde V_{z,\tau} \xi_{j, z, \tau} = i \omega_{j, z, \tau} \xi_{j, z, \tau}, \quad i \omega_{j, z, \tau} = b_z(\beta_{j, z, \tau}).
\end{displaymath}
Correspondingly, $ \{\xi_{j, z, \tau}\}_{j \in \bbN} \cup \{ \xi_{0,z,\tau} = \bm 1 \}$ is an orthonormal basis of $H$ consisting of eigenfunctions of $\tilde V_{z,\tau}$ with $\xi_{0,z,\tau}$ corresponding to eigenfrequency $\omega_{0,z,\tau} = 0$.

In light of \cref{thm:vztau_conv}, we view the eigenpairs $(\omega_{j,z,\tau}, \xi_{j,z,\tau})$ as generalizations of the true generator eigenpairs $(\omega_j, \xi_j)$ from \cref{sec:spec_decomp} that provide an orthonormal basis of $H_p$. Defining the unitary operators $U^t_{z,\tau} = e^{t V_{z,\tau}}$, we have
\begin{equation}
    \label{eq:koopman_evo_approx}
    U^t_{z,\tau} f= \sum_{j \in \mathbb N_0} \langle f, \xi_{j,z,\tau} \rangle e^{i \omega_{j,z,\tau}} \xi_{j,z,\tau}, \quad \forall f \in H,
\end{equation}
which generalizes~\eqref{eq:koopman_evo_hp} to the entire Hilbert space $H$. Moreover, by equivalence of strong resolvent convergence of $V_{z,\tau}$ to $V$ (\cref{prty:p5}) and strong dynamical convergence, we have $\lim_{z\to 0^+} \lim_{\tau\to 0^+} U^t_{z,\tau} f = U^t f$, so~\eqref{eq:koopman_evo_approx} consistently approximates the true Koopman evolution of observables in $H$.

\begin{remark}
    In general, the spectrum of $Q_{z,\tau}$ does not need to be a subset of the interval $i[-(2z)^{-1}, (2z)^{-1}]$ that contains the spectrum of $Q_z$. Still, in the numerical experiments performed in this work (see \cref{sec:examples}) we have not observed any cases where the inclusion $\sigma(Q_{z,\tau}) \subseteq i[-(2z)^{-1}, (2z)^{-1}]$ is violated. With such an inclusion, $b_z(\beta_{j, z, \tau}) = \tilde{q}_z^{-1}(\beta_{j, z, \tau})$ and an explicit choice of extension $b_z \supseteq \tilde q_z^{-1}$ is not needed.
\end{remark}

In \cref{sec:numimpl} below, our goal will be to produce approximations of the eigenpairs $(\omega_{j,z,\tau}, \xi_{j,z,\tau})$ using data-driven Galerkin methods for variational generalized eigenvalue problems. Besides the $L^2$ elements $\xi_{j,z,\tau}$, we will also be interested in smooth eigenfunctions $\zeta_{j,z,\tau}$ lying in the RKHS $\mathcal H \subset C^1(M)$ from \cref{sec:rkhs}. We obtain such eigenfunctions by applying the isometry $T\colon H \to \mathcal H$, viz.
\begin{equation}
    \label{eq:zeta_def}
    \zeta_{j,z,\tau} = T \xi_{j,z,\tau}.
\end{equation}
The resulting functions $\zeta_{j,z,\tau} \in \mathcal H$ are eigenvectors of the skew-adjoint operator $W_{z,\tau} := T^* V_{z,\tau} T$ on $\mathcal H$ which is unitarily equivalent to $V_{z,\tau}$, and form an orthonormal basis of $\mathcal H(X) \subseteq \mathcal H$. Furthermore, from the polar decomposition~\eqref{eq:polar_decomp_p} we get $\iota e^{t W_\tau} T = G^{1/2} e^{t V_\tau}$. From the latter and \cref{thm:vztau_conv} we deduce that $W_\tau$ generates a smooth approximation of Koopman evolution on $H$ (while being unitary on $\mathcal H$),
\begin{displaymath}
    \lim_{\tau\to 0^+} \iota e^{t W_\tau} T f = G^{1/2} U^t f, \quad \forall f \in H.
\end{displaymath}

\section{Finite-dimensional approximation}
\label{sec:numimpl}

\begin{algorithm}
    \caption{Numerical operator approximation for training data $x_0, x_1, \dotsc, x_{N - 1}$, symmetric positive-definite Markovian kernel $p_N$, resolvent parameter $z >0$, smoothing parameter $\tau > 0$, and number of basis functions $L \leq N -1$. Additionally, $\tilde{q}_z^{-1}(i\omega)$ is the function defined in~\eqref{eq:qzinverse}, and $\vec{V}$ denotes the dynamical vector field that generates the dynamics, \eqref{eq:v_phi}. The algorithm computes a rank-$L$ skew-adjoint operator $V_{z, \tau, L, N} \in \hat{H}_N$, along with its nonzero eigenfrequencies $\omega_{j,z, \tau,L,N}$ and eigenfunctions $\xi_{j,z, \tau,L,N}$. The corresponding operator $W_{z, \tau, L, N}$ with eigenfunctions $\hat\zeta_{j,z, \tau,L,N}$ in the RKHS $\mathcal{H}_N$ is also computed.}
  \label{alg:numerical}
  \begin{algorithmic}[1]
    
      \STATE{Compute kernel eigenvectors $ \{ \phi_{j,N}\}_{j=0}^L$ and corresponding eigenvalues $ \{ \lambda_{j,N} \}_{j = 0}^L$ from the kernel integral operator defined by $p_N$, i.e. $G_Nf = \int_X p_N(\cdot, x) f(x)\, d\mu_N(x)$.}
      \STATE{Define functions $\varphi_{j,N}$ to be continuous representatives of $\phi_{j,N}$ with respect to the sampling measure $\mu_N$; that is, $\varphi_{j,N}(x_n) = \phi_{j,N}(x_n)$ for all $ n \in \{ 0, \ldots, N-1 \}$ (equivalently; $\iota_N \varphi_{j,N} = \phi_{j,N}$).}
    %   \STATE{Compute eigenfunction derivatives $\{\phi'_{j,N}\}_{j=1}^L$ through the directional derivative of $\varphi_j$ with respect to the vector field $\vec{V}$, \[\iota_N \varphi'_{j,N} = \phi'_{j,N}, \quad \varphi_j' = \vec V \cdot \nabla \varphi_j.\] We can then define the approximation of the Koopman generator applied to these eigenvectors as generator approximation with these eigenvectors, such that $V_N \varphi_{j, N} \equiv \phi'_{j,N}$.}
    \STATE{Compute eigenfunction derivatives $\{\phi'_{j,N}\}_{j=1}^L$, where $\phi'_{j,N} = \iota_N \varphi'_{j,N}$ and $\varphi_j' = \vec V \cdot \nabla \varphi_j.$ Define the Koopman generator approximation $V_N$ such that $V_N \phi_{j, N} = \phi'_{j,N}$.}
    \STATE{Compute the $L \times L$ matrix representation $\bm V_N$ of $V_N$ with elements $\bm V_{ij, N} = \langle \phi_{i, N} , V_N \phi_{j, N} \rangle_N$. Compute $L \times L$ matrices $\bm V_N^{(2)}$ and $\bm{\tilde V}_N$, where  $\bm V_{ij, N}^{(2)} = \langle V_N \phi_{i, N} , V_N \phi_{j, N} \rangle_N$, and $\bm{\tilde V}_N = (\bm V_N - \bm V_N^\top)/2$.}
    %   \STATE{Compute eigenvector derivatives $\{ \phi'_{j,N}\}_{j=1}^L$ through the directional derivative of $p_N$ with respect to the vector field $\vec{V}$ i.e. $ \phi'_{j,N} = \frac{1}{\lambda_{j,N}} \int_X p'_N(\cdot, x) \phi'_{j,N}(x) d\mu_N(x)$. We can then define the approximation of the Koopman generator applied to these eigenvectors as generator approximation with these eigenvectors, such that $V_N \varphi_{j, N} \equiv \phi'_{j,N}$.}
      \STATE{Define $\bm \Lambda_{\tau/2,N}$ to be the $L\times L$ matrix approximation of $G_{\tau / 2}$ in the $\phi_{j,N}$ basis, where $\bm \Lambda_{\tau/2,N} = \diag(\lambda_{1,\tau/2,N}, \ldots, \lambda_{L,\tau/2,N})$, $\lambda_{j,\tau/2,N} = e^{-\tau \eta_{j, N}}$, and $\eta_{j, N} = \frac{\lambda_{j,N}^{-1} - 1}{\lambda_{0,N}^{-1} - 1}$.}
      \STATE{Compute the $L \times L$ matrices $\bm A_{\tau,N} = \bm \Lambda_{\tau/2,N} \bm{\tilde V}_N \bm \Lambda_{\tau/2,N}$ and $\bm B_{z,N} = z^2 \bm I + \bm V^{(2)}_N - (\bm V_N + \bm V_N^\top)$.}
      \STATE{Solve the matrix generalized eigenvalue problem $\mathbf{A}_{\tau, N} \mathbf{c}_j = \beta_{j, z, \tau, L, N} \mathbf{B}_{z, N} \mathbf{c}_j$ for eigenvectors $\mathbf{c}_j$ and eigenvalues $\beta_{j, z, \tau, L, N}$.}
      \STATE{Compute eigenfrequencies $\omega_{j, z, \tau, L} = b_z(\beta_{j, z, \tau, L})$  with corresponding eigenvectors $\xi_{j,z,\tau,L,N}$ of $V_{z, \tau, L, N}$ by $\xi_{j,z,\tau,L,N} = \sum_{i=1}^L (z \phi_{i,N} - \phi'_{i,N}) c_{ij}.$ Set $\omega_{0, z, \tau, L,N} = 0$ and $\xi_{0,z,\tau,L,N}$ to be constant.}
      \STATE{Define the functions $\varphi'_{i,L',N} := T_{L',N} \phi'_{i,N} = \sum_{j=0}^{L'-1} \langle \phi_{j,N}, \phi'_{i,N}\rangle_N \lambda_{j,N}^{1/2} \varphi_{j,N}$, where $T_{L',N}$.}
      \STATE{Define the RKHS operator $W_{z, \tau, L, N} = \hat T^*_N V_{z, \tau, L, N} \hat T_N$ to have eigenvalues $i\omega_{j, z, \tau, L,N}$ with eigenvectors $\hat\zeta_{j,z,\tau,L,N}$ approximated by $\zeta_{j,z,\tau,L,L',N} := \sum_{i=1}^L (z \sigma_{i,N}\varphi_{i,N} - \varphi'_{i,L',N}) c_{ij}$.}
\end{algorithmic}
\end{algorithm}

\subsection{Generalized eigenvalue problem}

To take advantage of physics-informed computation of function gradients (see~\eqref{eq:v_grad} and~\eqref{eq:vgrad_phi}), we transform the eigenvalue problem \eqref{eq:eigQ} into a generalized eigenvalue problem that replaces the resolvents in the left-hand side by a positive operator that is a quadratic polynomial of $V$ in the right-hand side. First, for every $z \in \rho(V)$, $z - V \colon D(V) \to H$ is a surjective operator, so for every eigenvector $\xi_{j,z,\tau}$ there exists a unique $u_{j,z,\tau} \in D(V)$ such that $\xi_{j,z,\tau} = (z - V) u_{j,z,\tau}$, giving
\begin{equation}
    \label{eq:pre_gev}
    R_z^* V_\tau u_{j,z,\tau} = \beta_{j,z,\tau} (z - V) u_{j,z,\tau}, \quad u_{j,z,\tau} \in D(V).
\end{equation}
Next, supposing that $\beta_{j,\tau,z} \neq 0$ (which corresponds to the solutions of interest), we have $(z - V) u_{j,z,\tau} \in \ran R_z^* = D(z+V)$. Thus, applying $z+V$ to the left- and right-hand sides of~\eqref{eq:pre_gev}, we get
\begin{equation}
    \label{eq:gev_strong}
    V_\tau u_{j,z,\tau} = \beta_{j,z,\tau} (z^2 - V^2) u_{j,z,\tau}, \quad u_{j,z,\tau} \in D(V^2), \quad \beta_{j,z,\tau} \neq 0.
\end{equation}
Note that $z^2 - V^2$ is a strictly positive operator, i.e., $\langle f, (z^2 - V^2) f \rangle > 0$ whenever $f \neq 0$, by skew-adjointness of $V$.

\subsection{Variational formulation}
\label{sec:variational}

Equation~\eqref{eq:gev_strong} defines our generalized eigenvalue problem in strong form. We next transform this problem into weak (variational) form by taking the inner product of both sides by a test function $f \in D(V)$, and using skew-adjointness of $V$ to obtain
\begin{equation}
    \label{eq:pre_varev}
    \langle f, V_\tau u_{j,z\tau} \rangle = \beta_{j,z,\tau}\langle \langle f, (z^2 - V^2) u_{j,z,\tau}\rangle\rangle = \beta_{j,z,\tau} (z^2 \langle f, u_{j,z,\tau}\rangle + \langle V f, V u_{j,z,\tau}\rangle).
\end{equation}

To obtain a well-posed variational eigenvalue problem from~\eqref{eq:pre_varev}, we introduce the Hilbert space $H_V = D(V) \cap \tilde H$, equipped with the inner product $\langle f, g\rangle_V := \langle f, g\rangle + \langle  Vf, Vg\rangle$. Note that $H_V$ is a Hilbert space since $V$ is a closed operator. Moreover, $H_V$ contains $\tilde H^1 := H^1 \cap \tilde H$ (the space of zero-mean functions in $H^1$) as a subspace and embeds continuously into $H$. Considered as an operator from $H_V$ to $H$, the generator $V$ is bounded.

By continuity and compactness of the embeddings $H_V \hookrightarrow H$ and $H \hookrightarrow H^{-1}$, respectively, $H_V$ embeds into $H^{-1}$ compactly, and we have:

\begin{lemma}
    \label{lem:sesqui}
    The sesquilinear forms $A_\tau \colon H_V \times H_V \to \mathbb C$ and $B_z\colon H_V \times H_V \to \mathbb C$ with
    \begin{displaymath}
        A_\tau(f, g) = \langle f, V_\tau g \rangle, \quad B_z(f, g) = z^2 \langle f, g \rangle + \langle  V f, V g\rangle
    \end{displaymath}
    satisfy the upper bounds
    \begin{displaymath}
        \lvert A_\tau(f, g)\rvert \leq C \lVert f \rVert_{H_V} \lVert g \rVert_{H^{-1}}, \quad \lvert B_z(f, g)\rvert \leq (1+z^2) \lVert f\rVert_{H_V} \lVert g \rVert_{H_V}, \quad \forall f,g \in H_V \times H_V,
    \end{displaymath}
    for a constant $C$. In addition, $B_z$ is coercive,
    \begin{displaymath}
        \lvert B_z(f, f)\rvert \geq \min\{1, z^2\} \lVert f \rVert_{H_V}^2, \quad \forall f \in H_V.
    \end{displaymath}
\end{lemma}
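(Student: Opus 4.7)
The plan is to dispatch the $B_z$ bounds by direct Cauchy--Schwarz in $H$, and then to prove the $A_\tau$ bound by transferring $V_\tau$ onto the $f$-slot and invoking the Gelfand duality pairing between $H^1$ and $H^{-1}$. To begin, note that since $E$ inherits its norm from $H_V$, we have $\lVert f \rVert_E^2 = \lVert f \rVert_H^2 + \lVert V f \rVert_H^2$, and in particular $\lVert f \rVert_H, \lVert V f \rVert_H \leq \lVert f \rVert_E$. Applying Cauchy--Schwarz term-by-term to $B_z(f,g) = z^2 \langle f, g\rangle + \langle V f, V g\rangle$ yields $\lvert B_z(f,g)\rvert \leq z^2 \lVert f \rVert_E \lVert g \rVert_E + \lVert f \rVert_E \lVert g \rVert_E = (1+z^2) \lVert f \rVert_E \lVert g \rVert_E$. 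Coercivity follows immediately from $B_z(f,f) = z^2 \lVert f \rVert_H^2 + \lVert V f \rVert_H^2 \geq \min\{1, z^2\} (\lVert f \rVert_H^2 + \lVert V f \rVert_H^2) = \min\{1, z^2\} \lVert f \rVert_E^2$.

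For $A_\tau$, I will first exploit that $V_\tau = G_{\tau/2} V G_{\tau/2}$ is skew-symmetric on $H$ (since $V^* = -V$ and $G_{\tau/2}^* = G_{\tau/2}$), rewriting $A_\tau(f, g) = -\langle V_\tau f, g \rangle_H$. The crucial observation is that $V_\tau f$ lies in $\ran G_{\tau/2} \subseteq H^1$, so the resulting inner product may be interpreted as the Gelfand pairing $H^1 \times H^{-1} \to \mathbb C$ obtained by density extension from the $H$-inner product. Expanding both arguments in the $\{\phi_j\}$ basis (with weights $\lambda_j^{\pm 1/2}$ governing the $H^{\pm 1}$ norms) and applying Cauchy--Schwarz on the coefficient sequences gives $\lvert \langle V_\tau f, g\rangle_H \rvert \leq \lVert V_\tau f \rVert_{H^1} \lVert g \rVert_{H^{-1}}$.

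It then suffices to bound $\lVert V_\tau f \rVert_{H^1} = \lVert G^{-1/2} V_\tau f \rVert_H = \lVert (G^{-1/2} G_{\tau/2})(V G_{\tau/2}) f \rVert_H$ by a multiple of $\lVert f \rVert_E$. The operator $G^{-1/2} G_{\tau/2}$ is bounded on $H$ --- this is explicitly noted in \cref{sec:rkhs} and follows from the exponential decay of $\lambda_{j,\tau/2} = e^{-\tau \eta_j/2}$ against the $\lambda_j^{1/2}$ weights. The operator $V G_{\tau/2} \colon H \to H$ is likewise bounded: since $p_{\tau/2} \in C^1(M \times M)$ by \cref{prty:K1} and $M$ is compact, we have $V G_{\tau/2} f = \iota \int_M (\vec V \cdot \nabla_x p_{\tau/2})(\cdot, y) f(y)\, d\mu(y)$, which is a Hilbert--Schmidt integral operator with continuous kernel on a compact domain. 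Composing the two bounds and using $\lVert f \rVert_H \leq \lVert f \rVert_E$ yields $\lVert V_\tau f \rVert_{H^1} \leq C \lVert f \rVert_E$ for some $C$ depending on $\tau$, finishing the proof. The only step requiring genuine care is this last estimate on $\lVert V_\tau f \rVert_{H^1}$, since it bundles together both kernel-smoothing ingredients and the $C^1$ regularity of the Mercer kernel $p_\tau$ developed in \cref{sec:markov,sec:rkhs}; everything else is bookkeeping with Cauchy--Schwarz.
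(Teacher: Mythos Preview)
Your proof is correct and follows essentially the same approach as the paper: the $B_z$ bounds are handled identically by termwise Cauchy--Schwarz, and for $A_\tau$ both arguments rely on the boundedness of $V G_{\tau/2}$ (via the $C^1$ regularity of $p_{\tau/2}$) together with the boundedness of $G^{-1/2}G_{\tau/2}$ to convert to the $H^{-1}$ norm on $g$. The only cosmetic difference is that you phrase the last step through the Gelfand duality pairing $\lvert\langle V_\tau f, g\rangle\rvert \leq \lVert V_\tau f\rVert_{H^1}\lVert g\rVert_{H^{-1}}$, whereas the paper factors as $\langle V G_{\tau/2} f, G_{\tau/2} g\rangle$ and bounds each slot separately; the ingredients and constants are the same.
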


\begin{proof}
    Boundedness and coercivity of $B_z$ follow from
    \begin{align*}
        \lvert B_z(f, g)\rvert & \leq z^2 \lVert f\rVert_H \lVert g\rVert_H + \lVert V f\rVert_H \lVert V g\rVert_H  \\
                               &\leq z^2 \lVert f\rVert_{H_V} \lVert g\rVert_{H_V} + \lVert f\rVert_{H_V} \lVert g\rVert_{H_V}  \\
                               &= (1 + z^2)\lVert f\rVert_{H_V} \lVert g\rVert_{H_V}
    \end{align*}
    and
    \begin{align*}
        \lvert B_z(f, f)\rvert &= z^2 (\lVert f\rVert^2_H + z^{-2} \lVert Vf \rVert_H^2) \\
                               & \geq z^2 \min\{1, z^{-2} \} \lVert f \rVert_{H_V}^2 \\
                               & = \min\{z^2, 1\} \lVert  f\rVert_{H_V}^2,
    \end{align*}
    respectively. For the upper bound on $A_\tau$, observe that $V G_{\tau/2}$ is a bounded operator on $H$ satisfying $\lVert  V G_{\tau/2}\rVert \leq \lVert \vec V \cdot \nabla \rVert \lVert p_{\tau/2} \rVert_{C^1(M \times M)}$, where the operator norm $\lVert \vec V \cdot \nabla\rVert$ is taken on $B(C^1(M), C(M))$. Using this fact and boundedness of $G^{-1} G_\tau$ (see \cref{sec:markov}), we compute
    \begin{align*}
        \lvert A_\tau(f,g )\rvert &= \lvert \langle V G_{\tau/2} f, G_{\tau/2 } g\rangle\rvert \\
                                  & \leq \lVert V G_{\tau/2} f \rVert_H \lVert G_{\tau/2}g \rVert_H \\
                                  &\leq  \lVert V G_{\tau/2}\rVert\lVert f\rVert_H \lVert G_{\tau/2} G^{-1}\rVert \lVert G g\rVert_H\\
                                  &= \lVert V G_{\tau/2}\rVert \lVert G_{\tau/2} G^{-1}\rVert  \lVert f\rVert_H \lVert g\rVert_{H^{-1}} \\
                                  &\leq \lVert V G_{\tau/2}\rVert \lVert G_{\tau/2} G^{-1}\rVert  \lVert f\rVert_{H_V}\lVert g\rVert_{H^{-1}}.
    \end{align*}
    which implies the claimed upper bound for $C =  \lVert V G_{\tau/2}\rVert \lVert G_{\tau/2} G^{-1}\rVert$.
\end{proof}

Results on variational eigenvalue problems \cite[e.g.,][section~8]{BabuskaOsborn91}, \cref{lem:sesqui}, and compactness of the embedding $H_V \hookrightarrow H^{-1}$ imply the existence of a compact operator $T_{z,\tau}\colon H_V \to H_V$ such that $\alpha A_\tau(f,u) = B_z(f, u)$ holds for some $u \neq 0$ for all $f \in H_V$ if and only if $\alpha T_{z,\tau} u = u$; i.e., $(\alpha^{-1},u)$ is an eigenpair of $T_{z,\tau}$ whenever $\alpha\neq 0$. Putting together these facts, we deduce that the following is a well-posed variational eigenvalue problem, yielding solutions of~\eqref{eq:gev_strong} in $H_V$.

\begin{definition}[variational eigenvalue problem for $Q_{z,\tau}$]
    \label{def:varev}
    Find $\beta_{j,z,\tau} \in \mathbb C$ and $u_{j,z,\tau} \in H_V \setminus \{ 0 \}$ such that
    \begin{displaymath}
        A_\tau(f, u_{j,z,\tau}) = \beta_{j,z,\tau} B_z(f, u_{j,z,\tau}), \quad \forall f \in H_V.
    \end{displaymath}
\end{definition}

For every solution $(\beta_{j,z,\tau}, u_{j,z,\tau})$ of this problem with $\beta_{j,z,\tau} \neq 0$ we have that
\begin{equation}
    \label{eq:psi_xi}
    \xi_{j,z,\tau} = (z - V) u_{j,z,\tau}
\end{equation}
is an eigenvector of $Q_{z,\tau}$ with corresponding eigenvalue $\beta_{j,z,\tau}$. Conversely, for every eigenvector $\xi_{j,z,\tau}$ of $Q_{z,\tau}$ corresponding to nonzero eigenvalue $\beta_{j,z,\tau}$, $(\beta_{j,z,\tau}, u_{j,z,\tau})$ with $u_{j,z,\tau} = R_z \xi_{j,z,\tau}$ solves the problem in \cref{def:varev}. Furthermore, since $u_{j,z,\tau}$ lies in $\ran G_{\tau/2} \subset H^1$, it has a representative in $\tilde u_{j,z,\tau} \in \mathcal H \subset C^1(M)$ obtained via the Nystr\"om operator from~\eqref{eq:nystrom} as $\tilde u_{j,z,\tau} = \mathcal N u_{j,z,\tau}$. As a result, $\xi_{j,z,\tau}$ has a continuous representative
\begin{displaymath}
    \tilde \xi_{j,z,\tau} = (z - \vec V \cdot \nabla) \tilde u_{j,z,\tau} \in C(M), \quad \iota \tilde \xi_{j,z,\tau} = \xi_{j,z,\tau}.
\end{displaymath}
We obtain the corresponding smooth eigenfunctions $\zeta_{j,z,\tau} \in \mathcal H$ of $W_{z,\tau}$ via~\eqref{eq:zeta_def}.

\subsection{Galerkin scheme}
\label{sec:galerkin}

We compute approximate solutions of the variational eigenvalue problem in \cref{def:varev} using Galerkin methods. To that end, we first establish that the kernel eigenfunctions $\phi_j$ from \cref{sec:markov} provide a dense subspace $\tilde E := \spn \{ \phi_1, \phi_2, \ldots \}$ of $H_V$, in which we will compute approximate solutions. Defining $ E = \spn \{ \phi_0, \phi_1, \ldots \} $ and using $\overline{(\cdot)}$ to denote operator closure, the density of $\tilde E$ in $H_V$ will be a corollary of the following proposition.

\begin{proposition}
    The restriction $V\rvert_E$ of $V$ onto $E$ is a well-defined, essentially skew-adjoint operator; that is, $\overline{V|_E} = V$.
\end{proposition}

\begin{proof}
    Well-definition of $V\rvert_E$ follows from the fact that $\phi_j$ lies in $D(V)$ for all $j \in \mathbb N_0$ (see \cref{sec:markov}). For essential skew-adjointness, recall that a necessary and sufficient condition for a symmetric operator $A\colon D(A) \to H$ to be essentially self-adjoint is that the images $(i \pm A)(D(A))$ are dense in $H$ \cite[e.g.,][Section~VIII.2]{ReedSimon80}. Letting $A = (V|_E)/i$, it follows that $V|_E$ is essentially skew-adjoint iff $(1 \pm V)E$ are dense subspaces of $H$. Suppose that $f$ is an element of $D(V)$ such that $f \in ((1-V)E)^\perp$. Then for all $j \in \mathbb{N}_0$, we have that $\langle (1 - V) \phi_j, f \rangle = 0$, so $\langle \phi_j, (1 + V) f \rangle = 0$. Since $\{\phi_j\}_{j \in \mathbb N_0}$ is an orthonormal basis of $H$, we have $(1 + V) f = 0$ which implies that $f = 0$ because $V$ has no eigenvalues equal to $-1$. This implies that $((1-V)E)^\perp \subseteq D(V)^\perp = \{ 0 \}$. We can then conclude that $(1 - V)E$ is dense in $H$. A similar argument shows that $(1 + V)E$ is also dense in $H$. The proposition then follows.
\end{proof}

\begin{corollary}
    The space $\tilde E$ is a dense subspace of $H_V$.
    \label{cor:dense}
\end{corollary}

\begin{proof}
    Let $G(A) = \{ (f, Af) \in H \times H \, : f \in D(A) \}$ denote the graph of an operator $A\colon D(A) \to H$. The graph $G(A)$ is equipped with the inner product $\langle (f, Af), (g, Ag) \rangle_{G(A)} = \langle f, g \rangle + \langle Af, Ag \rangle$, and it is canonically isomorphic to $D(A)$ equipped with the inner product $\langle f, g \rangle_A =  \langle f, g \rangle + \langle Af, Ag \rangle$. Explicitly, the isomorphism $\epsilon_A \colon D(A) \to G(A)$ is given by $\epsilon_A f = (f, Af)$. Since $V = \overline{V|_E}$, we have that $G(V) = \overline{G(V\rvert_E)}^{\norm{\cdot}_{H \times H}}$. We also have that $G(V) = \epsilon_V( D(V)) $ and $G(V|_E) = \epsilon_{V|_E} E = \epsilon_{V}E$. As such, we get $\epsilon_V(H_V) = \overline{\epsilon_V(E)}^{\norm{\cdot}_{H \times H}}$, which implies $D(V)= \overline{E}^{\norm{\cdot}_{D(V)}}$. Since $H_V = \tilde \Pi(D(V))$ and $\tilde E = \tilde \Pi E$, it follows that $H_V = \overline{\tilde E}^{\norm{\cdot}_{H_V}}$, proving the corollary.
\end{proof}

For $L \in \mathbb N$, consider the approximation spaces $E_L = \spn \{ \phi_1, \ldots, \phi_L \}$ and the orthogonal projections $\Pi_L \colon H \to H$ and $\Pi_{V,L}\colon H_V \to H_V  $ with $\ran \Pi_L = E_L \subset H$ and $\ran \Pi_{V,L} = E_L \subset H_V$. Note that $\Pi_{V,L}$ converges strongly to the identity on $H_V$ as $L\to\infty$ by \cref{cor:dense}. Our Galerkin method solves the following problem.

\begin{definition}[variational eigenvalue problem for $Q_{z,\tau}$; Galerkin approximation]
    \label{def:varev_galerkin}
    Find $\beta_{j,z,\tau,L} \in \mathbb C$ and $u_{j,z,\tau,L} \in E_L \setminus \{ 0 \}$ such that
    \begin{displaymath}
        A_z(f, u_{j,z,\tau,L}) = \beta_{j,z,\tau,L} B_z(f, u_{j,z,\tau, L}), \quad \forall f \in E_L.
    \end{displaymath}
\end{definition}

For every solution $(\beta_{j,z,\tau,L}, u_{j,z,\tau,L})$ we define $\xi_{j,z,\tau,L} \in H$ as
\begin{displaymath}
    \xi_{j,z,\tau,L} = (z - V) u_{j,z,\tau,L},
\end{displaymath}
the continuous representative $\tilde\xi_{j,z,\tau,L} \in C(M)$ as
\begin{equation}
    \label{eq:xi_nyst}
    \tilde \xi_{j,z,\tau,L} = (z - \vec V \cdot \nabla) \tilde u_{j,z,\tau,L}, \quad \tilde u_{j,z,\tau,L} = \mathcal N u_{j,z,\tau,L},
\end{equation}
and the RKHS function $\zeta_{j,z,\tau,L} \in \mathcal H$ as
\begin{displaymath}
    \zeta_{j,z,\tau,L} = T \xi_{j,z,\tau,L},
\end{displaymath}
where $\iota \tilde \xi_{j,z,\tau,L} = \xi_{j,z,\tau,L}$. One can directly verify that any two eigenvectors $\xi_{j,z,\tau,L}$ (resp.\ $\zeta_{j,z,\tau,L}$) corresponding to distinct eigenvalues $\beta_{j,z,\tau,L}$ are orthogonal in $H$ (resp.\ $\mathcal H$), and any linearly independent set corresponding to the same eigenvalue can be chosen to be orthogonal. Moreover, when $L$ is even, the eigenvalues/eigenvectors come in complex-conjugate pairs, similarly to the eigenvalues/eigenvectors of the infinite-rank operator $Q_{z,\tau}$. Henceforth, we shall assume that $L$ is chosen even.

We have $\lim_{L\to\infty} \lVert \Pi_{V,L} f - f\rVert_{H_V} = 0$ for all $f \in H_V$ by \cref{cor:dense}. Moreover, by boundedness and coercivity of $B_z$ on $H_V \times H_V$ (\cref{lem:sesqui}), for every $f \in H_V \setminus \{ 0 \}$ there exists $L_* \in \mathbb N$ and $c>0$ such that $B_z(f, f) \geq c$ holds for all $ L > L_*$. By results on Galerkin approximation of variational eigenvalue problems \cite[e.g.,][]{BabuskaOsborn91} these facts imply that, as $L\to \infty$, every nonzero $\beta_{j,z,\tau,L}$ converges to $\beta_{j,z,\tau}$ and that the corresponding eigenspaces --- $\mathbb E^{(V)}_{j,z,\tau}:= \spn \{ u_{i,z,\tau} : \beta_{i,z,\tau} = \beta_{j,z,\tau} \}$ and $\mathbb E^{(V)}_{j,z,\tau,L} := \spn \{ u_{i,z,\tau,L} : \beta_{i,z,\tau,L} = \beta_{j,z,\tau,L} \}$ --- converge in the sense of strong convergence of orthogonal projections. That is, with $\mathcal U_{j,z,\tau} \colon H_V \to H_V$ and $\mathcal U_{j,z,\tau,L} \colon H_V \to H_V$ denoting the orthogonal projections onto $\mathbb E^{(V)}_{j,z,\tau}$ and $\mathbb E^{(V)}_{j,z,\tau,L}$, we have
\begin{displaymath}
    \lim_{L\to\infty} \lVert \mathcal U_{j,z,\tau, L} f - \mathcal U_{j,z,\tau} f \rVert_{H_V} = 0, \quad \forall f \in H_V.
\end{displaymath}
Since $V$ is bounded as linear map from $H_V$ to $H$, it follows additionally that $\lim_{L\to\infty} \lVert (z- V) (\mathcal U_{j,z,\tau,L} - \mathcal U_{j,z,\tau,}) f \rVert_H = 0$ for every $f \in H_V$. This implies in turn that the orthogonal projections $\Xi_{j,z,\tau,L} \colon H \to H$ onto the subspaces $\mathbb E_{j,z,\tau,L} := \spn \{ \xi_{i,z,\tau,L} : \beta_{i,z,\tau,L} = \beta_{j,z,\tau,L} \} \subset H$ converge strongly to the orthogonal projections $\Xi_{j,z,\tau}\colon H \to H$ onto $\mathbb E_{j,z,\tau} := \spn \{ \xi_{i,z,\tau} : \beta_{i,z,\tau} = \beta_{j,z,\tau} \} \subset H$, i.e.,
\begin{displaymath}
    \lim_{L\to\infty} \lVert \Xi_{j,z,\tau} f - \Xi_{j,z,\tau,L} f\rVert_H =0, \quad \forall f \in H.
\end{displaymath}
Defining the frequencies $\omega_{j,z,\tau,L} = b_z(\beta_{j,z,\tau,L}) / i$ and the skew-adjoint, finite-rank operators $Q_{z,\tau,L} \colon \tilde H \to \tilde H$, $\tilde V_{z,\tau,L} = b_z(Q_{z,\tau,L}) $, and $V_{z,\tau,L} \colon H \to H$, where
\begin{displaymath}
    Q_{z,\tau,L} = \sum_{j=1}^L \beta_{j,z,\tau,L} \xi_{j,z,\tau,L}\langle \xi_{j,z,\tau,L, \cdot}\rangle, \quad \tilde V_{z,\tau,L} = \sum_{j=1}^L i \omega_{j,z,\tau,L} \xi_{j,z,\tau,L}\langle \xi_{j,z,\tau,L, \cdot}\rangle,
\end{displaymath}
and $V_{z,\tau,L} = \tilde \Pi \tilde V_{z,\tau,L} \tilde \Pi$, we have the following proposition:

\begin{proposition}
    As $L\to\infty$, $Q_{z,\tau,L}$ converges to $Q_{z,\tau}$ strongly, and $V_{z,\tau,L}$ converges to $V_{z,\tau}$ in strong resolvent sense.
\end{proposition}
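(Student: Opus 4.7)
I would organize the proof in two steps corresponding to the two assertions: (1) strong convergence of the compact skew-adjoint operators $Q_{z,\tau,L}$ to $Q_{z,\tau}$ on $\tilde H$; and (2) strong resolvent convergence of $V_{z,\tau,L}=b_z(Q_{z,\tau,L})$ to $V_{z,\tau}=b_z(Q_{z,\tau})$ through the Borel functional calculus, lifted from $\tilde H$ to $H=\tilde H \oplus \mathbb C \mathbf 1$ via the fixed splitting on which both operators vanish on constants.

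For step (1), I would first establish a uniform operator-norm bound. Testing \cref{def:varev_galerkin} with $u=\xi_{j,z,\tau,L}$ yields $\beta_{j,z,\tau,L}=A_\tau(\xi,\xi)/B_z(\xi,\xi)$; combining the bound $\lvert A_\tau(\xi,\xi)\rvert \leq \lVert V_\tau\rVert\lVert \xi\rVert_H^2$ with the coercivity estimate $B_z(\xi,\xi)\geq z^2\lVert \xi\rVert_H^2$ from \cref{lem:sesqui} gives $\lvert\beta_{j,z,\tau,L}\rvert \leq \lVert V_\tau\rVert/z^2$ uniformly in $j,L$, so $\lVert Q_{z,\tau,L}\rVert \leq \lVert V_\tau\rVert/z^2$ since $Q_{z,\tau,L}$ is skew-adjoint with (orthonormalized) eigenvectors $\zeta_{j,z,\tau,L}$ and eigenvalues $\beta_{j,z,\tau,L}$. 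Next I would verify pointwise convergence on the total set $\{\zeta_{i,z,\tau}\}_{i\geq 1}$: using that the Galerkin eigenprojection $Z_{i,z,\tau,L}$ commutes with $Q_{z,\tau,L}$, I would write
\begin{equation*}
    Q_{z,\tau,L}\zeta_{i,z,\tau}=\beta_{i,z,\tau,L}\, Z_{i,z,\tau,L}\zeta_{i,z,\tau}+Q_{z,\tau,L}(I-Z_{i,z,\tau,L})\zeta_{i,z,\tau}.
\end{equation*}
The first term converges to $\beta_{i,z,\tau}\zeta_{i,z,\tau}=Q_{z,\tau}\zeta_{i,z,\tau}$ by the scalar convergence $\beta_{i,z,\tau,L}\to \beta_{i,z,\tau}$ and the strong projection convergence $Z_{i,z,\tau,L}\to Z_{i,z,\tau}$ asserted in the paragraph preceding the proposition, together with $Z_{i,z,\tau}\zeta_{i,z,\tau}=\zeta_{i,z,\tau}$. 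The second term is bounded by $\lVert Q_{z,\tau,L}\rVert\cdot\lVert (I-Z_{i,z,\tau,L})\zeta_{i,z,\tau}\rVert$ and vanishes by uniform boundedness and strong projection convergence. A standard density argument then promotes strong convergence on the total span to strong convergence on $\tilde H$, and extending both operators by zero on constants gives strong convergence on $H$.

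For step (2), I would invoke a standard Borel functional-calculus theorem for skew-adjoint operators (in the style of Reed--Simon, \emph{Methods of Modern Mathematical Physics I}, Theorems VIII.20--VIII.23): a uniformly bounded, strongly convergent sequence of skew-adjoint operators $A_L\to A$ satisfies $f(A_L)\to f(A)$ in strong resolvent sense for every Borel $f$ continuous on $\sigma(A)$ off a set of zero spectral measure for $A$. The function $b_z$ extends $\tilde q_z^{-1}$, which is continuous on $i\mathbb R\setminus\{0\}$, so its only potential discontinuity lies at $0$. I would then show that $0$ is not an eigenvalue of $Q_{z,\tau}$ on $\tilde H$: if $Q_{z,\tau}f=0$ then $V_\tau R_z f=0$ by injectivity of $R_z^*$, hence $V G_{\tau/2}R_z f=0$ by injectivity of $G_{\tau/2}$, hence $G_{\tau/2}R_z f\in \mathbb C \mathbf 1\cap \tilde H=\{0\}$, hence $f=0$ by injectivity of $G_{\tau/2}$ and $R_z$. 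Consequently the spectral measure of $Q_{z,\tau}$ places no mass at $\{0\}$, the hypothesis of the theorem is met, and $b_z(Q_{z,\tau,L})\to b_z(Q_{z,\tau})$ in strong resolvent sense on $\tilde H$. Lifting through the invariant decomposition $H=\tilde H\oplus \mathbb C\mathbf 1$---on which both families act as zero on the constant factor with trivial resolvent $\lambda^{-1}$---yields $V_{z,\tau,L}\to V_{z,\tau}$ in strong resolvent sense on $H$. The main technical subtlety I anticipate is precisely the careful handling of the singular behavior of $b_z$ at the spectral accumulation point $0$; once the no-eigenvalue-at-discontinuity condition is verified as above, the rest of the argument is routine.
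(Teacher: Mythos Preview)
Your proof is correct. Step~(1) matches the paper's argument, supplying details (the uniform operator-norm bound via coercivity of $B_z$, convergence on the eigenbasis $\{\zeta_{i,z,\tau}\}$, density) that the paper compresses to a one-line assertion. Step~(2) takes a genuinely different route: the paper does not isolate a discontinuity of $b_z$ and never needs to verify that $0$ is a non-eigenvalue of $Q_{z,\tau}$. Instead, it observes that the composite $r_z\circ b_z$, with $r_z(i\omega)=(z-i\omega)^{-1}$ the resolvent symbol, is itself bounded and continuous on $i\mathbb R$, and then applies \cref{thm:spec-conv}(ii) directly to the strongly convergent (hence SRS-convergent) bounded skew-adjoint family $Q_{z,\tau,L}\to Q_{z,\tau}$ to get $r_z\circ b_z(Q_{z,\tau,L})\to r_z\circ b_z(Q_{z,\tau})$ strongly---this being precisely the desired resolvent convergence on $\tilde H$. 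The lift to $H$ is handled by the fixed splitting $R_{z,\tau,L}=\tilde R_{z,\tau,L}+z^{-1}\Pi_0$, as you also do. The paper's route is shorter because it bypasses the eigenvalue computation entirely; your injectivity chain showing $\ker Q_{z,\tau}\cap\tilde H=\{0\}$ is, on the other hand, more transparent about the blow-up of $\tilde q_z^{-1}$ at $0$, which the paper's phrase ``continuous extension'' glosses over (continuity of $r_z\circ b_z$ at $0$ really relies on $\lvert b_z(i\lambda)\rvert\to\infty$ forcing $r_z\circ b_z\to 0$ there).
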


\begin{proof}
    The strong convergence of $Q_{z,\tau,L}$ to $Q_{z,\tau}$ is a direct consequence of the spectral convergence $\beta_{j,z,\tau,L} \to \beta_{j,\tau}$ and $\Xi_{j,z,\tau,L} \sto \Xi_{j,z,\tau}$.
    Next, observe that, for $z>0$, the resolvent function $r_z\colon i \mathbb R \to \mathbb C$, $r_z(i\omega) = z - (i\omega)^{-1}$ is a bounded continuous function that maps the imaginary line to the circle in the complex plane with radius $(2z)^{-1}$ and center $(2z)^{-1}$ \cite[see, e.g.,][]{GiannakisValva24}. Since $\tilde R_{z,\tau} = r_z \circ b_z( Q_z)$ and $\tilde R_{z,\tau,L} = r_z \circ b_z (Q_{z,\tau})$ give the resolvents of $\tilde V_{z,\tau}$ and $\tilde V_{z,\tau,L}$, respectively, the strong convergence $R_{z,\tau,L} \sto R_{z,\tau}$ follows from strong convergence of $Q_{z,\tau,L}$ to $Q_{z,\tau}$ and boundedness and continuity of $r_z \circ b_z$. The strong convergence of the resolvent $R_{z,\tau,L}$ of $V_{z,\tau,L}$ to $R_{z,\tau}$ follows from $R_{z,\tau,L} \sto R_{z,\tau}$ and the facts that $R_{z,\tau,L} = \tilde R_{z,\tau,L} + z^{-1} \Pi_0$ and $R_{z,\tau} = \tilde R_{z,\tau} + z^{-1} \Pi_0$.
\end{proof}

Similarly to the infinite-rank operator $V_{z,\tau}$, $V_{z,\tau,L}$ is unitarily equivalent to a skew-adjoint operator $W_{z,\tau,L} = T^* V_{z,\tau,L} T$ acting on the RKHS $\mathcal H$, where
\begin{displaymath}
    W_{z,\tau,L} = \sum_{j=1}^L i \omega_{j,z,\tau,L} \zeta_{j,z,\tau,L}\langle \zeta_{j,z,\tau,L, \cdot}\rangle_{\mathcal H}.
\end{displaymath}
This operator has eigenvectors $\zeta_{j,z,\tau,L} \in \mathcal H$, and can be thought of as generating a smooth approximation of the evolution in $H$ generated by $V_{z,\tau,L}$; see \cref{sec:scheme}.

Next, to examine the above Galerkin approximation from a computational standpoint, define the $L \times L$ generator matrix $\bm V = [V_{ij}]_{i,j=1}^L$ with entries $V_{ij} = \langle \phi_i, V \phi_j\rangle = -V_{ji}$. Note that for the kernel construction described in \cref{sec:markov}, we can use~\eqref{eq:vgrad_phi} to express $V_{ij}$ using the singular vectors of the integral operator $\tilde K$ and the directional derivative kernel $\hat k'$, i.e.,
\begin{equation}
    \label{eq:v_ij}
    V_{ij} = \langle \phi_i, \tilde K' \gamma_j \rangle \equiv \int_X \int_X \phi_i(x) (\vec V_x \cdot \hat k(x, z)) \gamma_j(z) \, d\mu(x) \, d\mu(z).
\end{equation}
In particular, under our assumption of known dynamical vector field $\vec V$, the integrand in the right-hand side can be evaluated in a physics-informed manner by computing derivatives of the kernel function $\hat k$. In the experiments of \cref{sec:examples}, we perform these computations using automatic differentiation. Defining $\bm V^{(2)} = [V^{(2)}_{ij}]_{i,j=1}^L$ as the $L \times L$ symmetric positive-definite matrix with elements $V^{(2)}_{ij} = \langle V\phi_i, V\phi_j\rangle$, we have
\begin{equation}
    \label{eq:v2_ij}
    V^{(2)}_{ij} = \langle \tilde K' \gamma_i, \tilde K' \gamma_j\rangle \equiv \int_X \int_X \int_X \gamma_i(y) (\vec V_x \cdot \hat k(x, y)) (\vec V_x \cdot \hat k(x, z))\gamma_j(z) \, d\mu(x) \, d\mu(y) \, d\mu(z).
\end{equation}
In what follows, $\bm \Lambda_\tau$ will be the $L \times L$ diagonal matrix equal to $\diag (\lambda_{1,\tau}, \ldots, \lambda_{L,\tau})$, and $\bm I$ will be the $L \times L$ identity matrix.

Expanding $u_{j,z,\tau,L} = \sum_{i=1}^L c_{ij} \phi_i$, $c_{ij} \in \mathbb C$, and forming the column vector $\bm c_j = (c_{1j}, \ldots, c_{Lj})^\top \in \mathbb C^L$ of expansion coefficients, solutions of the variational problem in \cref{def:varev_galerkin} can be equivalently obtained by solving the matrix generalized eigenvalue problem
\begin{equation}
    \label{eq:matgev_galerkin}
    \bm A_\tau \bm c_j = \beta_{j,z,\tau,L} \bm B_z \bm c_j,
\end{equation}
where $\bm A_\tau := [A_\tau(\phi_i, \phi_j)]_{i,j=1}^L$ and $\bm B_z := [B_z(\phi_i, \phi_j)]_{i,j=1}^L$ are $L\times L$ matrix representations of the sesquilinear forms $A_\tau$ and $B_z$, respectively, on $E_L$. Explicitly, we have
\begin{displaymath}
    \bm A_\tau = \bm \Lambda_{\tau/2} \bm V \bm \Lambda_{\tau/2}, \quad \bm B_z = z^2 \bm I + \bm V^{(2)}
\end{displaymath}
Moreover, the corresponding eigenvectors $\xi_{j,z,\tau,L}$ and $\zeta_{j,z,\tau,L}$ of $V_{z,\tau,L}$ and $W_{z,\tau,L}$, respectively, can be obtained from
\begin{equation}
    \label{eq:xi_deriv_matvec}
    \xi_{j,z,\tau,L} = \sum_{i=1}^L (z \phi_i - \phi'_i) c_{ij}, \quad \zeta_{j,z,\tau,L} = \sum_{i=1}^L (z \sigma_i\varphi_i - T\phi'_i) c_{ij},
\end{equation}
where the directional derivatives $\phi'_i \in H$ are computed again via automatic differentiation using~\eqref{eq:vgrad_phi} and the RKHS functions $T \phi'_i$ are given by~\eqref{eq:rkhs_unitary}. Note that, in general, computation of $T \phi_i'$ via~\eqref{eq:rkhs_unitary} requires evaluation of an infinite linear combination of RKHS basis functions $\psi_i$. In practical applications, we can use the partial isometries $T_{L'}$ from~\eqref{eq:rkhs_partial_iso} to truncate this sum to a finite number of terms $L' \in \mathbb N$. This leads to the approximation
\begin{equation}
    \label{eq:zeta_partial_isometry}
    {\zeta_{j,z,\tau,L,L'} := \sum_{i=1}^L (z \sigma_i\varphi_i - \varphi'_{i,L'}) c_{ij}, \quad \varphi'_{i,L'} := T_{L'} \phi'_i},
\end{equation}
which converges to $\zeta_{j,z,\tau,L}$ as $L' \to \infty$ in the norm of $\mathcal H$. For completeness, we note the formula $\tilde\xi_{j,z,\tau,L} = \sum_{i=1}^L (z \varphi_i - \varphi'_i) c_{ij} \in C(M)$ for the continuous representative $\tilde \xi_{j,z,\tau,L}$ of $\xi_{j,z,\tau,L}$ from~\eqref{eq:xi_nyst}.

\subsection{Data-driven approximation}
\label{sec:data_driven}

Practical implementation of the Galerkin scheme described in~\cref{sec:galerkin} relies on evaluation of the integrals for the generator matrix elements $V_{ij}$ in~\eqref{eq:v_ij}. Even if the dynamical vector field $\vec V$ is known, these integrals cannot typically be evaluated in closed form, e.g., due to lack of explicit knowledge of the kernel eigenbasis and/or the invariant measure. As a result, one must oftentimes resort to numerical approximation of these integrals. Our main focus is on data-driven quadrature schemes, where integrals with respect to the invariant measure $\mu$ are approximated by ergodic time averaging along numerical trajectories.

Let $x_0, x_1, \ldots$ be a sequence of points in $M$ that is equidistributed with respect to $\mu$. This means that the sampling measures $\mu_N := \sum_{n=0}^{N-1} \delta_{x_n} / N$ converge to the invariant measure in the sense of weak-$^*$ convergence of Borel measures on the compact manifold $M$,
\begin{equation}
    \label{eq:sampling_conv}
    \lim_{N\to\infty} \int_M f \, d\mu_N = \int_M f \, d\mu, \quad \forall f \in C(M).
\end{equation}
Under the ergodicity assumption in~\cref{sec:dynamical_system}, $\mu$-a.e.\ initial condition $x_0$ and Lebesgue-a.e.\ sampling interval $\Delta t>0$ will yield a sequence $x_n = \Phi^{n\, \Delta t} (x_0)$ satisfying~\eqref{eq:sampling_conv}. Moreover, for systems with so-called observable ergodic invariant measures \cite[e.g.,][]{Blank17}, $x_0$ may be drawn from a subset of $M$ of positive ambient measure (e.g., volume measure induced from a Riemannian metric). While little can be said about rates of convergence of $\int_M f \, d\mu_N$ to $\int_M f\, d \mu$ for arbitrary dynamical systems and observables, there are examples (oftentimes involving hyperbolic dynamics) exhibiting central limit theorems for sufficiently smooth observables \cite{Baladi00}, with corresponding $O(N^{-1/2})$ rates. An analysis of convergence rates is beyond the scope of this paper. However, it is worthwhile noting that the fact that our scheme does not require time-ordered snapshots (by virtue of using known equations of motion) provides flexibility in the strategy employed to generate the samples $x_n$. For example, grid-based quadrature or Markov chain Monte Carlo would also be suitable, so long as the matrices involved in the variational eigenvalue problem in \cref{def:varev_galerkin} can be consistently approximated as $N \to \infty$.

For a given sampling measure $\mu_N$, our data-driven approximation scheme is built by replacing the infinite-dimensional Hilbert space $H= L^2(\mu)$ by the finite-dimensional space $\hat H_N := L^2(\mu_N)$ with inner product $\langle f, g \rangle_N := \int_M f^* g \, d \mu_N \equiv \sum_{j=0}^{N-1} f^*(x_n) g(x_n)/N$. For simplicity of exposition, we will tacitly assume that the points $x_n$ are all distinct so that $\hat H_N$ has dimension $N$. The finite set $X_N = \{ x_0, \ldots, x_{N-1}\} \subset M$ will then denote the support of $\mu_N$.

We construct data-driven versions of the kernel $p\colon M \times M \to \mathbb R_+$, the integral operator $G\colon H \to H$, the SVD $(\phi_j \in H, \sigma_j \in \mathbb R_+, \gamma_j \in H)$ of the kernel integral operator $\tilde K\colon H \to H$, the eigenvalues $\lambda_j, \lambda_{j,\tau} $, and the continuously differentiable representatives $\varphi_j \in C^1(M)$ of $\phi_j$ analogously to the procedure described in \cref{sec:markov}, replacing throughout the invariant measure $\mu$ by $\mu_N$, and the initial kernel $k$ is chosen to have variable bandwidth (see \ref{app:vbkernel}). The resulting data-driven objects will be denoted using $N$ subscripts, with additional $\hat{(\cdot)}$ accents when needed to avoid confusion with symbols already introduced in previous sections, e.g., $p_N \colon M \times M \to \mathbb R$, $\hat G_N \colon \hat H_N \to \hat H_N$, $\phi_{j,N} \in \hat H_N$, $\sigma_{j,N} \in \mathbb R_+$, $\gamma_{j,N} \in \hat H_N$, $\hat\lambda_{j,N}, \lambda_{j,\tau,N} \in \mathbb R_+$, and $\varphi_{j,N} \in C^1(M)$. Moreover, $\iota_N \colon C(M) \to \hat H_N$ will denote the restriction map from continuous functions on $M$ to equivalence classes of discretely sampled functions in $\hat H_N$. Note that the functions $\varphi_{j,N}$ are continuous representatives of $\phi_{j,N}$ with respect to the sampling measure $\mu_N$. That is, we have $\varphi_{j,N}(x_n) = \phi_{j,N}(x_n)$ for all $ n \in \{ 1, \ldots, N-1 \}$ (equivalently, $\iota_N \varphi_{j,N} = \phi_{j,N}$).

Further discussion of these constructions along with convergence results in the large-data limit, $N\to \infty$, can be found in DGV. In brief, using results on spectral approximation of kernel integral operators \cite[e.g.,][]{VonLuxburgEtAl08}, it can be shown that every (strictly positive) eigenvalue $\lambda_j$ of $G$ can be approximated by eigenvalues of $\hat G_N$,
\begin{equation}
    \label{eq:lambda_conv}
    \lim_{N\to\infty} \lambda_{j,N} = \lim_{N\to\infty}\lambda_j,
\end{equation}
and for every corresponding eigenfunction $\phi_j \in H$ with continuous representative $\varphi_j$ there exists a sequence of eigenfunctions $\phi_{j,N} \in \hat H_N$ whose continuous representatives $\varphi_{j,N}$ converge to $\varphi_j$ in $C(M)$ norm.

In what follows, $\mathcal H_N \subset C^1(M)$ will be the RKHS with reproducing kernel $p_N$, and $\hat T_N \colon \hat H_N \to \mathcal H_N$, $\mathcal N_N \colon \hat H_N \to \mathcal H_N$, and $\mathcal E_N \colon \hat H_N \to \mathbb R_+$ the corresponding isometric embedding, Nystr\"om operator and Dirichlet energy, respectively. Note that $\mathcal N_N$ and $ \mathcal E_N$ are defined on the entire Hilbert space $\hat H_N$ by finite dimensionality of this space (unlike their densely defined counterparts $\mathcal N$ and $\mathcal E$, respectively, on the infinite-dimensional Hilbert space $H$). We also let $\psi_{j,N}$ be orthonormal basis vectors of $\mathcal H_N(X_N)$, defined analogously to $\psi_j$ in the case of $\mathcal H(X)$. For the purposes of the physics-informed scheme studied in this paper (which requires evaluation of directional derivatives $\vec V \cdot \nabla$ along the dynamical vector field), we augment \crefrange{prty:K1}{prty:K5} required of the kernels $p_\tau$ by an additional assumption on $C^1$-norm convergence of eigenfunctions.

\begin{enumerate}[label=(K\arabic*)]
    \setcounter{enumi}{5}
    \item \label[prty]{prty:K6} For every $j \in \mathbb N_0$ (and sufficiently large $N$) the eigenfunctions $\phi_{j,N} \in \hat H_N$ of $G_{\tau,N}$ are chosen such that their corresponding representatives $\varphi_{j,N} \in C^1(M)$ have a $C^1(M)$ limit $\varphi_j$,
        \begin{displaymath}
            \lim_{N\to\infty}\lVert \varphi_{j,N} - \varphi_j \rVert_{C^1(M)} = 0,
        \end{displaymath}
        where $\varphi_j$ is the $C^1(M)$ representative of the eigenfunction $\phi_j \in H$ from~\eqref{eq:varphi}.
    % \item \label[prty]{prty:K6} For every $j \in \mathbb N_0$ (and sufficiently large $N$) the eigenfunctions $\phi_{j,N} \in \hat H_N$ of $G_{\tau,N}$ are chosen such that their corresponding representatives $\varphi_{j,N} \in C^1(M)$ have a $C^1(M)$ limit $\varphi_j$ as $N \to \infty$, where $\varphi_j$ is the $C^1(M)$ representative of the eigenfunction $\phi_j \in H$ of $G_\tau$ corresponding to eigenvalue $\lambda_j$.
\end{enumerate}

We then have:

\begin{proposition}
    \label{prop:c1}
For the kernel construction described in \cref{sec:markov}, every eigenfunction $\phi_j$ of $G_\tau$ has a corresponding sequence of eigenfunctions $\phi_{j,N}$ of $G_{\tau,N}$ satisfying \cref{prty:K6}.
\end{proposition}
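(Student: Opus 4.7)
The plan is to bootstrap the $C^0(M)$ convergence $\varphi_{j,N} \to \varphi_j$ already available from DGV and \cite{VonLuxburgEtAl04} to $C^1(M)$ convergence, by exploiting the fact that each $\varphi_{j,N}$ arises as a $C^1$-smoothing of its discrete eigenvector against the kernel $p_N$. The crucial analytical input throughout is the standard fact that if $\{g_x\}_{x \in M}$ is a uniformly bounded equicontinuous family in $C(M)$, then weak-$^*$ convergence $\mu_N \to \mu$ implies $\int_M g_x \, d\mu_N \to \int_M g_x \, d\mu$ uniformly in $x$.

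First I would upgrade the kernel convergence to $C^1$. Applying the equicontinuous-family argument to $y \mapsto k(x,y)$ and to each $y \mapsto \partial_{x_i} k(x,y)$ yields $d_N \to d$ in $C^1(M)$, and the same reasoning gives $q_N \to q$ in $C^1(M)$. Since $d$, $q$, and $\tilde d$ are bounded below on the compact set $M$ (by strict positivity of $k$ and compactness), dividing and taking square roots preserves $C^1$ convergence, so $\hat k_N \to \hat k$ in $C^1(M \times M)$. A second application of the argument to the double-integral representation $p(x,y) = \int_M \hat k(x, z) \hat k(z, y) \, d\mu(z)$, differentiated under the integral sign, then gives $p_N \to p$ in $C^1(M \times M)$.

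Next I would invoke the cited spectral convergence results to obtain $\lambda_{j,N} \to \lambda_j$ and $C^0(M)$ convergence of appropriately normalized representatives $\varphi_{j,N} \to \varphi_j$, selecting bases within eigenspaces in the usual way when multiplicities exceed one. The eigenvalue equation extended to all of $M$ gives
\begin{displaymath}
    \varphi_{j,N}(x) = \lambda_{j,N}^{-1} \int_M p_N(x, y) \varphi_{j,N}(y) \, d\mu_N(y),
\end{displaymath}
and since $p_N \in C^1(M \times M)$, differentiating under the integral yields
\begin{displaymath}
    \partial_{x_i} \varphi_{j,N}(x) = \lambda_{j,N}^{-1} \int_M \partial_{x_i} p_N(x, y) \varphi_{j,N}(y) \, d\mu_N(y),
\end{displaymath}
with the analogous identity for $\partial_{x_i}\varphi_j$. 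A triangle-inequality split of the difference of these two expressions produces three terms: (a) one involving $\partial_{x_i} p_N - \partial_{x_i} p$, controlled by the first step together with uniform boundedness of $\varphi_{j,N}$; (b) one involving $\varphi_{j,N} - \varphi_j$, controlled by the $C^0$ convergence together with boundedness of $\partial_{x_i} p$; and (c) a residual $\int \partial_{x_i} p(x,\cdot)\varphi_j \, d\mu_N - \int \partial_{x_i} p(x,\cdot) \varphi_j \, d\mu$, controlled uniformly in $x$ by the equicontinuous-family argument applied to the continuous integrand. Combined with $\lambda_{j,N} \to \lambda_j$, this yields uniform convergence of $\partial_{x_i} \varphi_{j,N}$ to $\partial_{x_i} \varphi_j$, hence convergence in $C^1(M)$.

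The main obstacle is keeping every estimate uniform in the outer variable $x$; this is achieved throughout by invoking equicontinuity of the kernel sections guaranteed by $C^1$ regularity of $p_N$ and $p$ on compact $M$. A subsidiary point is that uniform control of $\lambda_{j,N}^{-1}$ (which is safe because $\lambda_j > 0$ and eigenvalues converge) and of $\|\varphi_{j,N}\|_{C(M)}$ (which follows from boundedness of $\hat K_N$ from $\hat H_N$ into $C^1(M)$, uniformly in $N$) must be verified before the triangle-inequality estimate can be closed.
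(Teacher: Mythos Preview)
Your proposal is correct and follows essentially the same strategy as the paper: bootstrap the known $C^0(M)$ eigenfunction convergence to $C^1(M)$ by writing $\varphi_{j,N}$ as a kernel integral against a $C^1$ kernel and performing a three-term split, with the $\mu_N \to \mu$ term handled by an equicontinuity/Glivenko--Cantelli argument. The paper packages this split into a standalone lemma (if $\kappa_N \to \kappa$ in $C^1(M\times M)$ and $f_N \to f$ in $C(M)$, then $I_{\kappa_N,\mu_N} f_N \to I_{\kappa,\mu} f$ in $C^1(M)$) and applies it repeatedly.

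The one substantive difference is the choice of integral representation. You use the eigenvalue equation $\varphi_{j,N} = \lambda_{j,N}^{-1} \int p_N(\cdot,y)\varphi_{j,N}(y)\,d\mu_N(y)$, which forces you to first establish $p_N \to p$ in $C^1(M\times M)$ (an extra double-integral step, though routine). The paper instead uses the SVD representation $\varphi_{j,N} = \sigma_{j,N}^{-1} \hat K_N \gamma_{j,N}$, so it only needs $\hat k_N \to \hat k$ in $C^1(M\times M)$ together with $C^0$ convergence of the continuous representatives $\tilde\gamma_{j,N}$ of the \emph{right} singular vectors. Your route is self-contained (it feeds $\varphi_{j,N}$ back into itself and avoids introducing the auxiliary $\gamma_{j,N}$), while the paper's route is slightly more economical on the kernel side (it stops at $\hat k_N$ rather than pushing through to $p_N$). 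Both are valid and the analytical content is the same.
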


\begin{proof}
    See \ref{app:proof_prop_c1}.
\end{proof}

\begin{remark}
    The reason that \cref{prty:K6} requires a choice of eigenfunctions $\varphi_{j,N}$ is that eigenvectors of linear operators are arbitrary up to multiplication by nonzero scalars, or up to a general invertible linear combination for multidimensional eigenspaces. An equivalent way of formulating the assumption without invoking a choice of eigenfunctions (but at the expense of introducing new notation) would be to require strong convergence of spectral projections onto the corresponding eigenspaces.
\end{remark}

With these assumptions and results in place, we consider the following approximation $ \hat V_N \colon \hat H_N \to C(M)$ of the dynamical vector field $\vec V \cdot \nabla \colon C^1(M) \to C(M)$,
\begin{displaymath}
    \hat V_N = (\vec V \cdot \nabla) \circ \mathcal N_N,
\end{displaymath}
and the approximation $V_N\colon \hat H_N \to \hat H_N $ of the generator $V$ given by $V_N = \iota_N \circ \hat V_N$. Unlike the true generator $V$, $V_N$ is in general not antisymmetric, so we also employ an antisymmetric approximation, $\tilde V_N = (V_N - V_N^*)/2$, where the adjoint is taken on $\hat H_N$. \Cref{fig:phi} shows plots of the $C^1(M)$ representative $\varphi_{j,N} = \mathcal N_N \phi_{j,N}$ and the directional derivative $\varphi'_{j,N} = \vec V \cdot \nabla \varphi_{j,N} \equiv \hat V_N \phi_{j,N}$ for a kernel eigenfunction $\phi_{j,N}$ computed from a dataset sampled near the Lorenz attractor in $\mathbb R^3$ (to be used in the numerical experiments of \cref{sec:examples}).

\begin{figure}
    \includegraphics[width=\linewidth,draft=false]{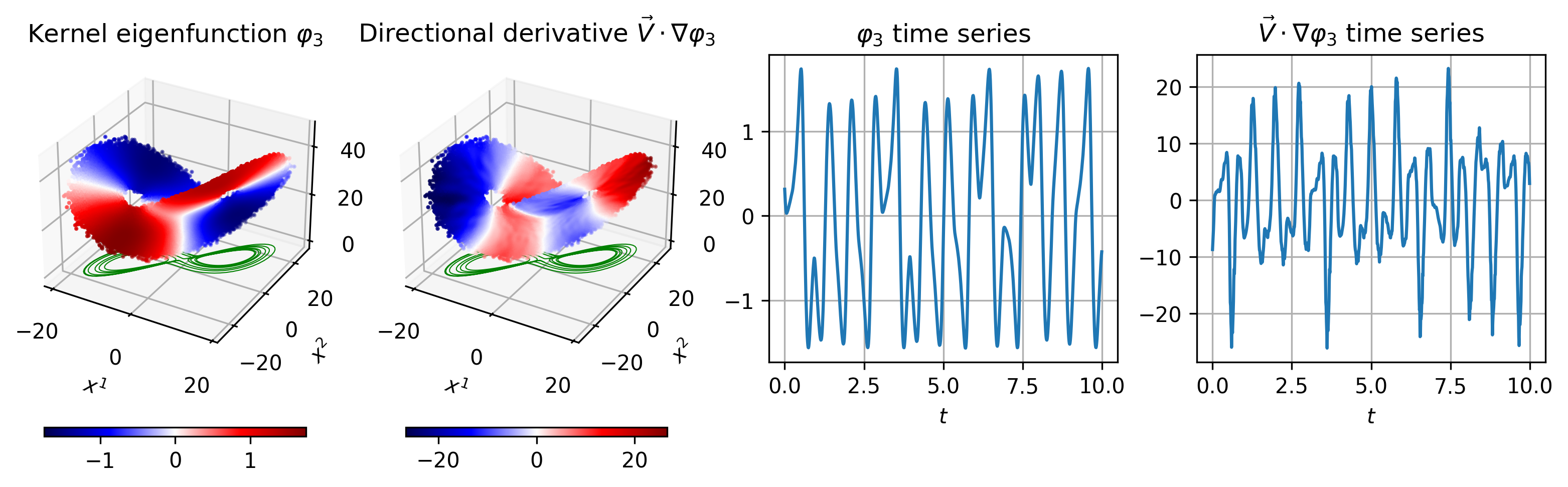}
    \includegraphics[width=\linewidth,draft=false]{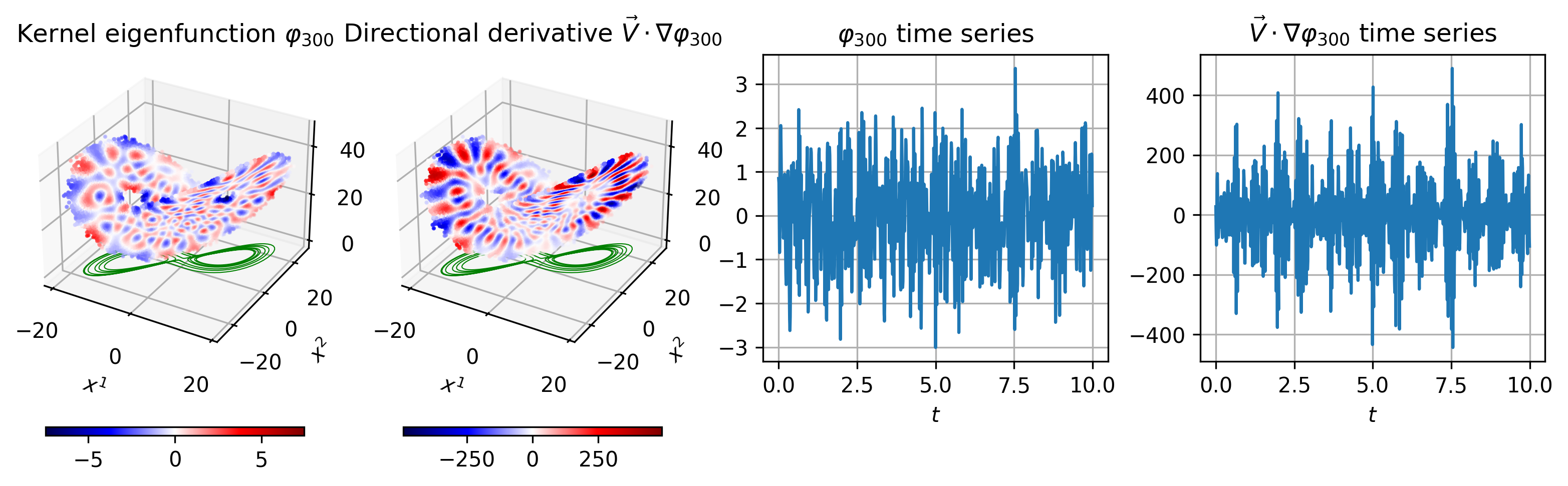}
    \caption{$C^1(M)$ representatives $\varphi_{j,N}$ of eigenfunctions $\phi_{j,N}$ (first and third column from the left), and the directional derivatives $ \varphi_{j,N}' = \vec V \cdot \nabla \varphi_{j,N} \in C(M)$ (second and fourth column) with respect to the dynamical vector field $\vec V$ of the L63 system (see~\eqref{eq:v_l63}) for $j=3$ (top row) and $j=300$ (bottom row). The eigenfunctions were computed using the training data employed in the L63 experiments in \cref{sec:examples}. The scatterplots in the first and second columns show values of $\varphi_{j,N}$ and $\varphi'_{j,N}$, respectively, on the training dataset. The time series in the third and fourth columns show the corresponding values sampled along a dynamical trajectory that is distinct from the training dataset (see again \cref{sec:examples}). The $(x^1, x^2)$ coordinates of the test trajectory are plotted as a green line in the $x^3 = 0$ plane of the three-dimensional plot domain in the first and third panels for reference. Observe that as $j$ increases $\varphi_{j,N}$ exhibits smaller-scale variability on the Lorenz attractor. The use of the variable-bandwidth kernel \eqref{eq:k_vb} was important for obtaining a well-behaved data-driven basis $\varphi_{j,N}$ at large $j$; see \cref{fig:phi_nb} for a comparison.}
    \label{fig:phi}
\end{figure}

Next, as a data-driven analog of the space $H_V$ from \cref{sec:galerkin}, we use $H_{V,N}$ --- this space is equal to $\hat H_N$ as a vector space but equipped with the inner product $\langle f, g \rangle_{V_N} := \langle f, g\rangle_N + \langle V_N f, V_N g\rangle_N$. Moreover, as analogs of the subspaces $\tilde E$ and $E_1, E_2, \ldots$, respectively, we define $\tilde E_N = \spn\{\phi_{1,N}, \ldots, \phi_{N-1,N}\}$ and $ E_{L,N} = \spn \{ \phi_{1,N}, \ldots, \phi_{L,N} \}$ for $L \in \{ 1,\ldots, N \}$. Note that since $\phi_{0,N}$ can be chosen as the constant vector $\bm 1_N := \iota_N \bm 1$ (by Markovianity of $\hat G_N$), we have that $\tilde E_N$ is equal as a vector space to $\tilde H_N := \{ f \in \hat H_N: \langle \bm 1_N, f \rangle_N =0 \}$; i.e., the space of zero-mean functions with respect to the sampling measure $\mu_N$. In what follows, $\tilde \Pi_N$ and $\Pi_{L,N}$ will be the orthogonal projections on $\hat H_N$ mapping into $\tilde H_N$ and $E_{L,N}$ respectively.

Using the approximate generator $V_N$ and its antisymmetric version $\tilde V_N$, we define sesquilinear forms $A_{\tau,N}\colon H_{V,N} \times H_{V,N} \to \mathbb C$ and $B_{z,N} \colon H_{V,N} \times H_{V,N} \to \mathbb C $ that will serve as data-driven analogs of $A_\tau$ and $B_z$ from \cref{lem:sesqui},
\begin{displaymath}
    A_{\tau,N}(f, g) = \langle f, V_{\tau,N} g \rangle_N, \quad B_{z,N}(f,g) = z^2 \langle (z - V_N)f, (z - V_N) g\rangle_N,
\end{displaymath}
where $V_{\tau,N} = G_{\tau/2,N} \tilde V_N G_{\tau/2, N}$. With these definitions, the data-driven version of the Galerkin-approximated variational eigenvalue problem from \cref{def:varev_galerkin} is as follows.

\begin{definition}[variational eigenvalue problem for $Q_{z,\tau}$; data-driven approximation]
    \label{def:varev_datadriven}
    Find $\beta_{j,z,\tau,L,N} \in \mathbb C$ and $u_{j,z,\tau,L,N} \in E_{L,N} \setminus \{ 0 \}$ such that
    \begin{displaymath}
        A_{\tau,N}(f, u_{j,z,\tau,L,N}) = \beta_{j,z,\tau,L} B_{z,N}(f, u_{j,z,\tau, L, N}), \quad \forall f \in E_{L,N}.
    \end{displaymath}
\end{definition}

Similarly to \cref{sec:galerkin}, for every solution $(\beta_{j,z,\tau,L,N}, u_{j,z,\tau,L,N})$ we compute $\xi_{j,z,\tau,L,N} = (z - V_N) u_{j,z,\tau,L,N} \in E_{L,N}$, the continuous representative $\tilde \xi_{j,z,\tau,L,N} = \mathcal N_N \xi_{j,z,\tau,L,N} \in C(M)$, the RKHS eigenfunction $\hat\zeta_{j,z,\tau,L,N} = \hat T_N \xi_{j,z,\tau,L,N} \in \mathcal H_N$, and the corresponding eigenfrequency $\omega_{j,z,\tau,L,N} = b_z(\beta_{j,z,\tau,L,N})/ i \in \mathbb R$. The vectors $\xi_{j,z,\tau,L,N}$ can again be chosen to form an orthonormal basis of $E_{L,N}$ with respect to the $\langle \cdot, \cdot\rangle_N$ inner product. Together with the corresponding eigenvalues and eigenfrequencies, they reconstruct skew-adjoint operators $Q_{z,\tau,L,N}\colon \tilde H_N \to \tilde H_N$, $\tilde V_{z,\tau,L,N} = b_z(Q_{z,\tau,L,N})$, and $V_{z,\tau,L,N} \colon \hat H_N \to \hat H_N$, where
\begin{align*}
    Q_{z,\tau,L,N} &= \sum_{j=1}^L \beta_{j,z,\tau,L,N} \xi_{j,z,\tau,L,N}\langle \xi_{j,z,\tau,L, \cdot}\rangle_N, \\
    \tilde V_{z,\tau,L,N} &= \sum_{j=1}^L i \omega_{j,z,\tau,L,N} \xi_{j,z,\tau,L,N}\langle \xi_{j,z,\tau,L, \cdot}\rangle_N, \\
    V_{z,\tau,L,N} &= \tilde \Pi_N \tilde V_{z,\tau,L,N} \tilde \Pi_N.
\end{align*}
Moreover, $V_{z,\tau,L,N}$ is unitarily equivalent to a skew-adjoint operator $W_{z,\tau,L,N} = T^*_N V_{z,\tau,L,N} T_N$ acting on the RKHS $\mathcal H_N$, where
\begin{displaymath}
    W_{z,\tau,L,N} = \sum_{j=1}^L i \omega_{j,z,\tau,L,N} \zeta_{j,z,\tau,L,N}\langle \zeta_{j,z,\tau,L,N}, \cdot\rangle_{\mathcal H_N}.
\end{displaymath}

Computationally, solving the variational eigenvalue problem in \cref{def:varev_datadriven} is equivalent to solving the generalized eigenvalue problem
\begin{equation}
    \label{eq:matgev_datadriven}
    \bm A_{\tau,N} \bm c_j = \beta_{j,z,\tau,L,N} \bm B_{z,N} \bm c_j,
\end{equation}
where $\bm A_{\tau,N} := [A_{\tau,N}(\phi_{i,N}, \phi_{j,N})]_{i,j=1}^L$ and $\bm B_{z,N} := [B_{z,N}(\phi_{i,N}, \phi_{j,N})]_{i,j=1}^L$ are $L\times L$ matrices representing $A_{\tau,N}$ and $B_{z,N}$, respectively, and $\bm c_j = (c_{1j}, \ldots, c_{Lj})^\top \in \mathbb C^L$ is a column vector containing the expansion coefficients $c_{ij}$ of $u_{j,z,\tau,L,N}$ in the $\phi_{i,N}$ basis of $E_{L,N}$. Aside from differences owing to non-antisymmetry of $V_N$, the matrices $\bm A_{\tau,N}$ and $\bm B_{z,N}$ can be computed similarly to $\bm A_\tau$ and $\bm B_z$, respectively, in the eigenvalue problem~\eqref{eq:matgev_datadriven}. Specifically, we have
\begin{displaymath}
    \bm A_{\tau,N} = \bm \Lambda_{\tau/2,N} \bm{\tilde V}_N \bm \Lambda_{\tau/2,N}, \quad \bm B_{z,N} = z^2 \bm I + \bm V^{(2)}_N - (\bm V_N + \bm V_N^\top),
\end{displaymath}
where $\bm \Lambda_{\tau/2,N} = \diag(\lambda_{1,\tau/2,N}, \ldots, \lambda_{L,\tau/2,N})$, $\bm{\tilde V}_N = (\bm V_N - \bm V_N^\top)/2$, $\bm V_N = [V_{ij,N}]_{i,j=1}^L$, and $\bm V_N^{(2)} = [V_{ij,N}^{(2)}]_{i,j=1}^L$ with $V_{ij,N} = \langle \phi_{i,N}, V_N \phi_{j,N} \rangle_N$ and $V_{ij,N}^{(2)} = \langle V_N \phi_{i,N}, V_N \phi_{j,N} \rangle_N$. \Cref{fig:mat} shows heat maps of these matrices obtained from the same L63 dataset used in the eigenfunction plots of \cref{fig:phi}. Similarly to~\eqref{eq:xi_deriv_matvec}, we form the eigenvectors $\xi_{j,z,\tau,L,N} \in \hat H_N$ and $\hat \zeta_{j,z,\tau,L,N} \in \mathcal H_N$ from the linear combinations
\begin{displaymath}
    \xi_{j,z,\tau,L,N} = \sum_{i=1}^L (z \phi_{i,N} - \phi'_{i,N}) c_{ij}, \quad \hat\zeta_{j,z,\tau,L,N} = \sum_{i=1}^L \sigma_{i,N} (z \varphi_{i,N} - \hat T_N \phi'_{i,N}) c_{ij}.
\end{displaymath}
Moreover, similarly to $\zeta_{j,z,\tau,L,L'}$ from~\eqref{eq:zeta_partial_isometry} we approximate $\hat \zeta_{j,z,\tau,L,N}$ by $\zeta_{j,z,\tau,L,L',N} \in \mathcal H_N$ using the partial isometries $T_{N,L'} \colon \hat H_N \to \mathcal H_N$ with $L' \leq N - 1$,
\begin{displaymath}
    \zeta_{j,z,\tau,L,L',N} := \sum_{i=1}^L (z \sigma_{i,N}\varphi_{i,N} - \varphi'_{i,L',N}) c_{ij}, \quad \varphi'_{i,L',N} := T_{L',N} \phi'_{i,N}.
\end{displaymath}
We also note that the vectors $\xi_{j,z,\tau,L,N} \in \hat H_N$ have continuous representatives $\tilde\xi_{j,z,\tau,L,N} \in C(M)$ (cf.\ $\tilde \xi_{j,z,\tau,L}$ in~\eqref{eq:xi_nyst}), where
\begin{equation}
    \label{eq:xi_nyst_datadriven}
    \tilde \xi_{j,z,\tau,L,N} = \sum_{i=1}^L (z \varphi_{i,N} - \varphi'_{i,N}) c_{ij}.
\end{equation}

\begin{figure}
    \includegraphics[width=\linewidth,draft=false]{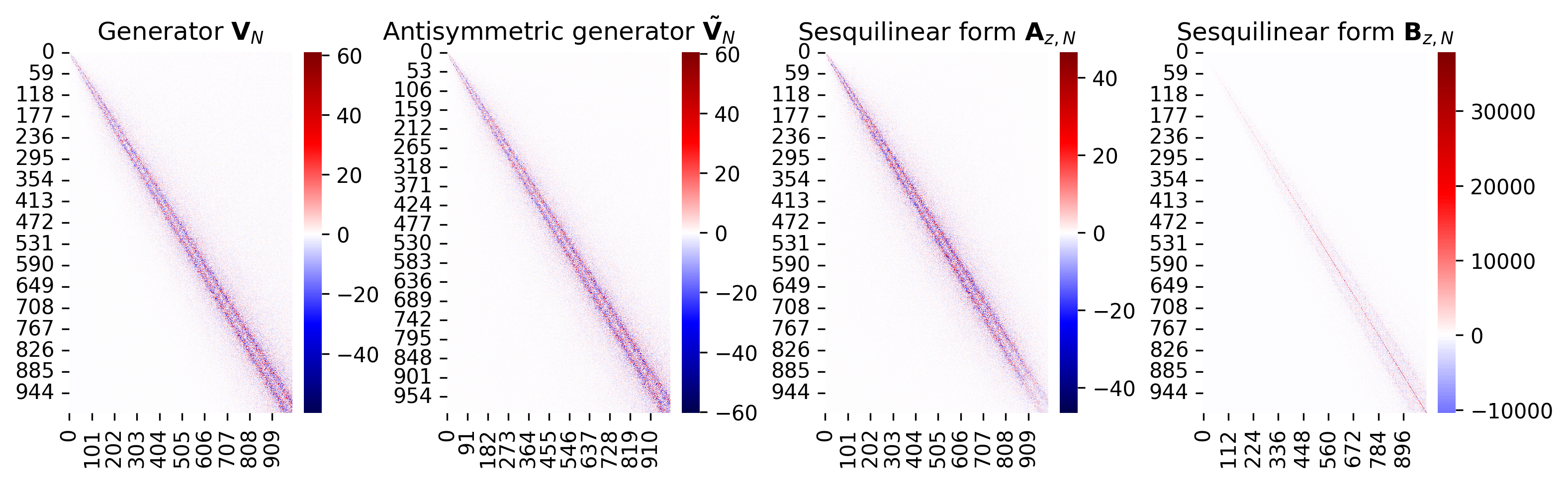}
    \caption{Matrices associated with the data-driven variational eigenvalue problem from \cref{def:varev_datadriven}. From left to right, the panels show heat maps of the generator matrix $\bm V_N$, its antisymmetric part $\bm{\tilde V}_N$, the matrix representation $\bm A_{z,N}$ of the sesquilinear form $A_{z,N}$, and the matrix representation $\bm B_{z,N}$ of the sesquilinear form $B_{z,N}$. Matrices were computed for $L=1000$, $z=0.1$, and $\tau=1\times 10^{-5}$ using the same data and remaining parameters as the L63 experiments from \cref{sec:examples}.}
    \label{fig:mat}
\end{figure}

By~\eqref{eq:lambda_conv} and \cref{prty:K6}, as $N \to \infty$ at fixed $L$, the matrices $\bm A_{\tau,N}$ and $\bm B_{z,N}$ converge to $\bm A_\tau$ and $\bm B_z$ from \cref{sec:galerkin}, respectively, in any matrix norm. This implies convergence of eigenvalues and eigenfrequencies, $\lim_{N\to\infty} \beta_{j,z,\tau,L,N} = \beta_{j,z,\tau,L}$ and $\lim_{N\to\infty} \omega_{j,z,\tau,L,N} = \omega_{j,z,\tau,L}$, respectively, as well as convergence of the corresponding spectral projections on $\mathbb C^L$. As a result, by $C^1(M)$-norm convergence of $\varphi_{j,\tau,N}$ to $\varphi_{j,\tau}$, we have convergence of the functions $\tilde\xi_{j,z,\tau,L,N}$ to $\tilde\xi_{j,z,\tau,L}$ in $C(M)$.

In summary, the results of the data-driven scheme described in this subsection converge to those from the Galerkin scheme of \cref{sec:galerkin} as $N \to \infty$ at fixed $L$, and the results of the latter converge to solutions of the variational eigenvalue problem from \cref{sec:variational} as $L\to\infty$. For completeness, we include a formula for computing the  elements of the data-driven generator matrix $\bm V_N$,
\begin{displaymath}
    V_{ij,N} = \langle \phi_{i,N}, \tilde K'_N \gamma_{j,N} \rangle_N \equiv \int_X \int_X \phi_{i,N}(x) (\vec V_x \cdot \hat k_N(x, z)) \gamma_{j,N}(z) \, d\mu_N(x) \, d\mu_N(z).
\end{displaymath}
We see that this formula is structurally very similar to \eqref{eq:v_ij}, with obvious replacements of analytical terms by data-driven ones. In particular, the directional derivative $\vec V_x \cdot \hat k_N(x, z)$ of the (data-driven) kernel function $\hat k_N$ is still computed with respect to the true vector field $\vec V$. The entries $V_{ij,N}^{(2)}$ of $\mathbf V_N^{(2)}$ can be computed in a similar number via an analogous formula to \eqref{eq:v2_ij}. It should also be noted that when working with a pullback  kernel $k^{(Y)} \colon Y \times Y \to \mathbb R$ from data space (as will be the case in the examples of \cref{sec:examples}), we have
\begin{equation}
    \label{eq:xi_dataspace}
    \tilde \xi_{j,z,\tau,L,N} = F \circ \xi^{(Y)}_{j,z,\tau,L,N}, \quad  \zeta_{j,z,\tau,L,N} = F \circ \zeta^{(Y)}_{j,z,\tau,L,N}
\end{equation}
for everywhere-defined functions $\xi^{(Y)}_{j,z,\tau,L,N}\colon Y \to \mathbb C$ and $\zeta^{(Y)}_{j,z,\tau,L,N}\colon Y \to \mathbb C$ on data space; see \cref{rk:nystrom_pullback}.

\section{Numerical examples}
\label{sec:examples}

We apply the spectral decomposition technique described in \cref{sec:numimpl} to three measure-preserving ergodic systems: a linear rotation on $\mathbb T^2$, a Stepanoff flow on $\mathbb T^2$ \cite{Oxtoby53}, and the L63 system on $\mathbb R^3$ \cite{Lorenz63}. The linear torus rotation is a prototypical system with pure point spectrum and analytically known Koopman eigenvalues and eigenfunctions. The Stepanoff flow has an ergodic invariant measure supported on a smooth manifold (the Lebesgue measure on the 2-torus), and is characterized by topological weak mixing, i.e., absence of nonconstant continuous Koopman eigenfunctions due to the presence of a fixed point. The L63 system has an ergodic invariant measure supported on a fractal set (the SRB measure on the Lorenz attractor), and is mixing with an associated continuous spectrum of the Koopman operator.

Our experimental setup follows closely that of \cite{GiannakisValva24}. In each example, we numerically generate state space trajectory data $x_0, \ldots, x_{N-1} \in \mathcal M$, $x_n = \Phi^{n\,\Delta t}(x_0)$, where $\Delta t>0$ is a fixed sampling interval and $x_0$ an arbitrary initial condition. We then embed the state space trajectory in data space $Y = \mathbb R^d$ via an embedding $F\colon X \to Y$ to produce time-ordered samples $y_0, \ldots, y_{N-1}$ with $y_n = F(x_n)$. The primary difference between our setup and that of \cite{GiannakisValva24} is that the time step $\Delta t$ will be relatively large: for example, in the L63 experiments in this work we use $\Delta t = 3$, while  \cite{GiannakisValva24} uses a significantly shorter interval $\Delta t = 0.02$ in order to approximate the Koopman resolvent by a Laplace transform. Here, we expect that this increased value of $\Delta t$ will result in a more uniform sampling of the invariant measure for a given training dataset size $N$.

Using the samples $y_n$, we build Markov operators $\hat G_N$ and compute their eigenvectors $\phi_{j,N}$ and corresponding eigenvalues $\hat \lambda_{j,N}$ via the approach described in \cref{sec:markov,sec:data_driven}, using a variable-bandwidth kernel $k^{(Y)}\colon Y \times Y \to \mathbb R$ of the form~\eqref{eq:k_vb}. We then solve the variational eigenvalue problem in \cref{def:varev_datadriven} to obtain eigenfunctions $\xi_{j,z,\tau,L,N}$ of the approximate generator $V_{z,\tau,L,N}$ and their corresponding eigenfrequencies $\omega_{j,z,\tau,L,N}$.
 
We order the computed eigenpairs $(\omega_{1,z,\tau,L,N}, \xi_{1,z,\tau,L,N}), (\omega_{2,z,\tau,L,N}, \xi_{2,z,\tau,L,N}), \ldots$ in order of increasing Dirichlet energy $E_{j,z,\tau,L,N} := \mathcal E_N(\xi_{j,z,\tau,L,N})$. As discussed in \cref{sec:rkhs}, the Dirichlet energy measures the regularity of a given function and is larger for functions that project onto $\phi_{j,N}$ with small corresponding eigenvalues $\hat \lambda_{j,N}$ of $\hat G_N$. The regularity measured with the Dirichlet energy can also be a useful (post-hoc) criterion for identifying spectral pollution; see \cite[Corollary 3]{DasEtAl21}. Alternatively, one can sort eigenfunctions in terms of the decay of their autocorrelation function as in \cite{GiannakisValva24}, which acts as a metric of eigenfunction predictability and ``closeness'' to  a true Koopman eigenfunction. Recall, in particular, from~\cref{sec:spec_decomp} that a Koopman eigenfunction $\xi_j \in \tilde{H}$ with corresponding eigenfrequency $\omega_j \in \mathbb R$ will have an autocorrelation function that behaves as a pure complex phase, $C_{\xi_j\xi_j}(t) = e^{i\omega_j t}$, without amplitude decay.

Following computation of the eigenpairs $(\omega_{j,z,\tau,L,N}, \xi_{jz,\tau,L,N})$, we build the RKHS eigenfunctions $\zeta_{j,z,\tau,L,L',N} \in \mathcal H_N$ and the corresponding data space functions $\zeta^{(Y)}_{j,z,\tau,L,L',N}$ using the partial isometry $T_{L,N} \colon \hat H_N \to \mathcal H_N$ (see \cref{rk:nystrom_pullback}). To assess the out-of-sample behavior of our scheme, we evaluate these functions on test data $\tilde y_0, \ldots, \tilde y_{N-1} \in Y$ with $\tilde y_n = F(\tilde x_n) $ obtained from a trajectory $\tilde x_n = \Phi^{n\, \tilde \Delta t}(\tilde x_0)$ with different initial condition $\tilde x_0 \in \mathcal M$ and sampling interval $\widetilde{\Delta t} > 0$ to those used for the generation of the training data. We choose $\widetilde{\Delta t} \ll \Delta t$ to test the ability of our scheme to reconstruct time series of observable values at high temporal resolution from temporally coarse training data.

For the remainder of this section, if there is no risk of confusion we suppress $z$, $\tau$, $L$, $L'$, and $N$ subscripts from our notation for eigenfunctions and eigenfrequencies, i.e., $\zeta_{j,z,\tau,L,L',N} \equiv \zeta_j$, $\omega_{j,z,\tau,L,N} \equiv \omega_j$, and $E_{j,z,\tau,L,N} \equiv E_j$. A summary of the dataset attributes and numerical parameters employed in our experiments is provided in \cref{tab:parameters}.

\begin{table}
    \caption{Dataset attributes and numerical parameters used in the torus rotation, Stepanoff flow, and L63 experiments.}
    \label{tab:parameters}
    \centering\small
    \begin{tabular}{lccc}
        & Torus rotation & Stepanoff flow & Lorenz 63 system \\
        \hline
        Data space dimension $d$ & 4 & 4 & 3 \\
        Training samples $N$ & 60,000 & 68,000 & 80,000 \\
        Training sampling interval $\Delta t$ & $\sqrt{7} \approx 2.65 $ & 3.0 & 3.0 \\
        Resolvent parameter $z$ & 0.1 & 0.1 & 0.1 \\
        Regularization parameter $\tau$ & $3 \times 10^{-3}$ & $1\times 10^{-6}$ & $1 \times 10^{-5}$ \\
        Number of basis functions $L$, $L'$ & 400 & 1500 & 1000 \\
        Test samples $\tilde N$ & 2000 & 2000 & 2000 \\
        Test sampling interval $\widetilde{\Delta t}$ & 0.01 & 0.01 & 0.01 \\
        \hline
   \end{tabular}
\end{table}

\begin{figure}[t]
    \centering
    \includegraphics[width=.45\linewidth, draft=false]{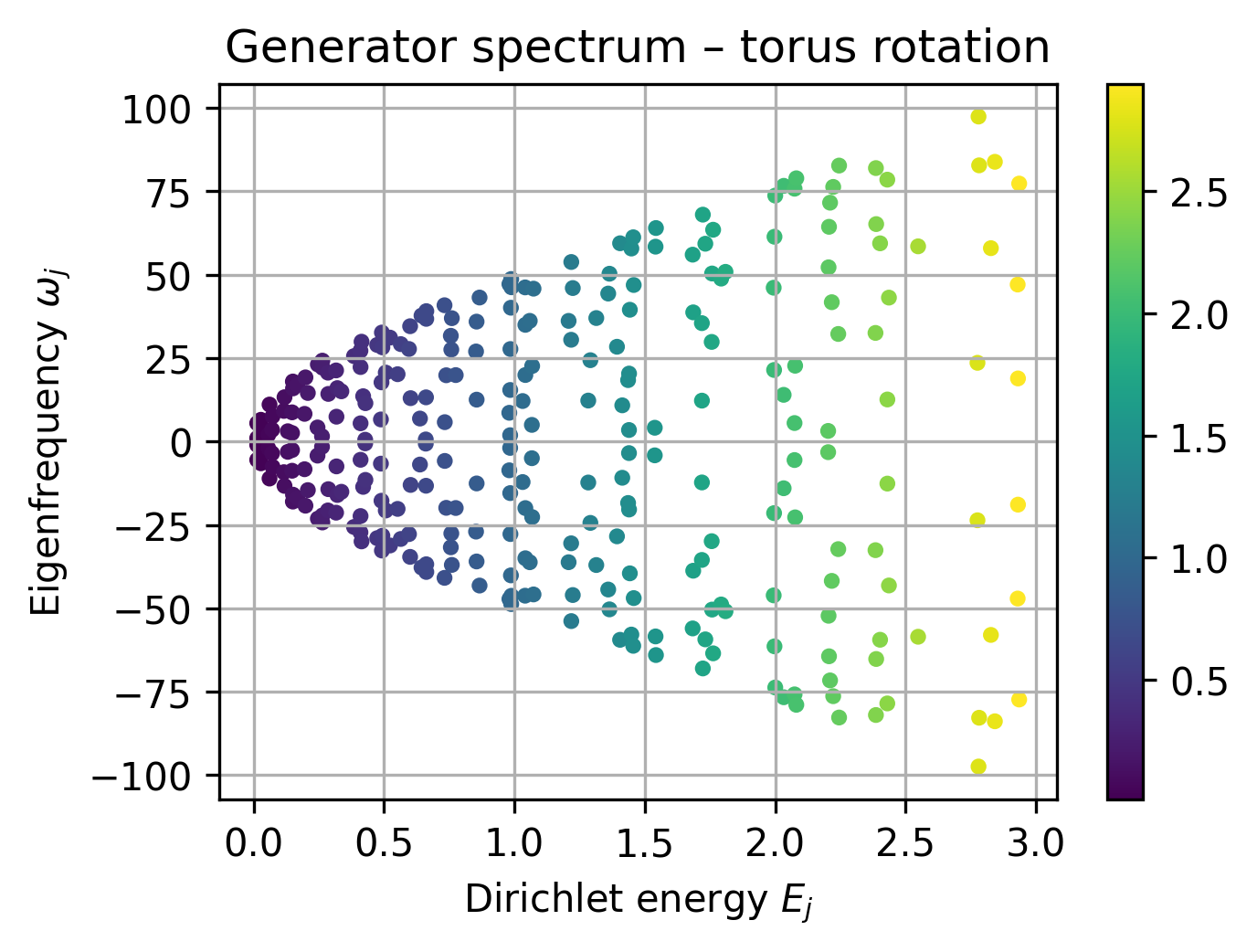}
    \hfill
    \includegraphics[width=.45\linewidth, draft=false]{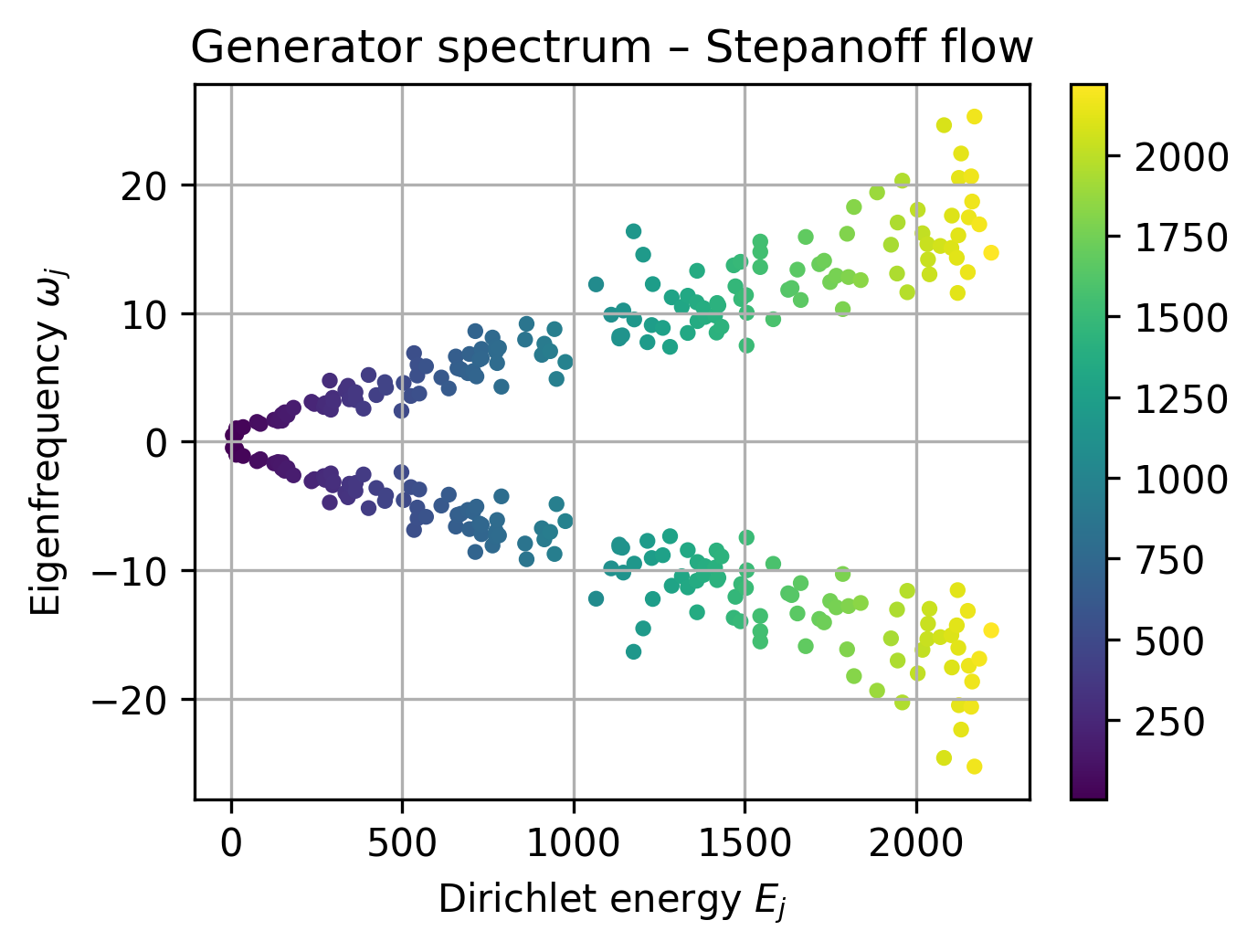}\\
    \includegraphics[width=.45\linewidth, draft=false]{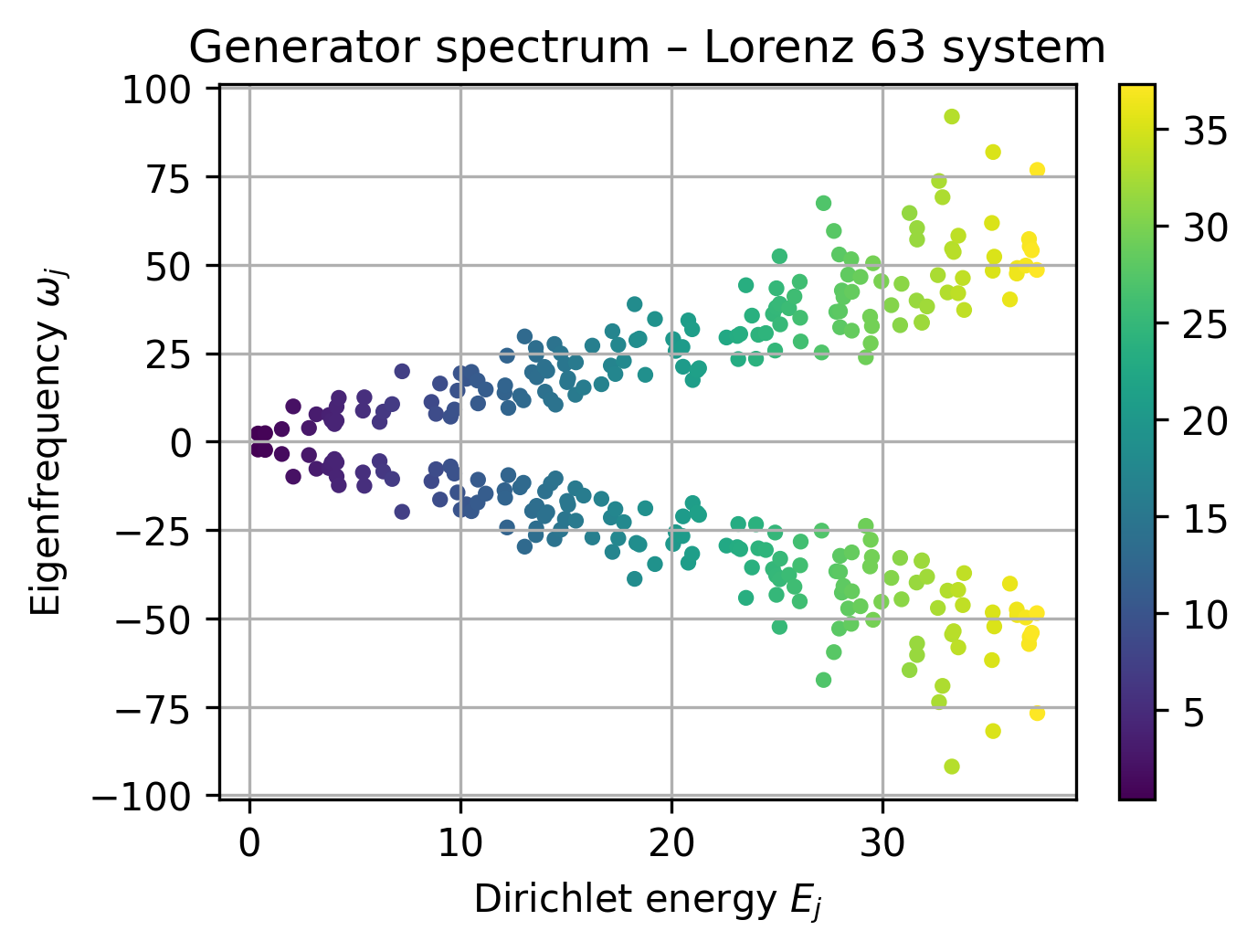}
    \caption{Regularized generator spectra for the torus rotation, Stepanoff flow, and L63 system. The plots show the first 300 eigenfrequencies $\omega_j$, ordered in increasing order of Dirichlet energy $E_j$, versus $E_j$.}
    \label{fig:spectra}
\end{figure}

\subsection{Linear torus rotation}
\label{sec:torus}

The linear torus rotation is generated by the vector field $\vec V\colon \mathbb T^2 \to T \mathbb T^2$ with components $\vec V(x) = (1, \alpha)$ in canonical angle coordinates, $x = (x^1, x^2) \in [0, 2\pi)^2$, where $\alpha$ is an irrational frequency parameter. The resulting dynamical flow is given in closed form by $\Phi^t(x) = \left( x + (1, \alpha ) t \right) \mod 2\pi$, and has a unique ergodic invariant Borel probability measure $\mu$ given by the normalized Haar measure on $\mathbb T^2$. Thus, in this example we have $\mathcal M = M = X = \mathbb T^2$. One readily verifies that the generator $V$ on $H= L^2(\mu)$ is diagonal in the Fourier basis $ \{ \phi_l(x) = e^{i l \cdot x} \}_{l \in \mathbb Z^2}$ of $H$,
\begin{displaymath}
    V \phi_l = i \alpha_l \phi_l, \quad \alpha_l = l_1 + \alpha l_2, \quad l = (l_1, l_2).
\end{displaymath}

On the basis of these facts, our data-driven eigenfunctions $\zeta_j$ should approximate (up to a phase factor) $\phi_{l_j}$ for some $l_j = (l_{1j}, l_{2j}) \in \mathbb Z^2$ depending on the ordering $\zeta_1, \zeta_2, \ldots$ used. Moreover, the corresponding eigenfrequencies $\omega_j$ should be approximately equal to integer linear combinations of the basic frequencies of the rotation, $\omega_j \approx l_{1j} + l_{2j} \alpha$. As described in \cref{sec:spec_decomp}, when normalized to unit norm in $H$, each eigenfunction $\phi_l$ induces a semiconjugacy with a circle rotation of frequency $\alpha_l$, and by continuity of Fourier functions that semiconjugacy is topological. This means that $\zeta_j$ should take values near the unit circle in the complex plane, and for a given initial condition $x \in \mathbb T^2$ the real and imaginary parts of the time series $t \mapsto \zeta_j(\Phi^t(x))$ should be sinusoids oscillating with frequency $\alpha_{l_j}$ and with a relative phase difference of 90$^\circ$.

We compute eigenfunctions $\zeta_j$ and their corresponding eigenfrequencies $\omega_j$ using a dataset of $N= \text{60,000}$ samples $x_n \in \mathbb T^2$ sampled at an interval $\Delta t = \sqrt{7} \approx 2.65$. Note that $\Delta t$ sits between the slow and fast oscillation periods of the system, $2\pi / \alpha \approx 1.15 < \Delta t < 2 \pi$. The dynamical states $x_n$ are embedded in the data space $Y = \mathbb R^4$ by means of the standard (flat) embedding of the 2-torus, $F(x) = (\cos x^1, \sin x^1, \cos x^2, \sin x^2)$, and we compute data-driven basis functions $\phi_{j,N}$ using the pullback kernel construction described in \cref{sec:markov} for the data $y_n = F(x_n)$. We compute eigenpairs $(\omega_j, \zeta_j)$ using $L=400$ such basis functions and the resolvent and regularization parameters $z = 0.1$ and $\tau= 0.003$, respectively. In addition, we evaluate the continuous representatives $\zeta_j$ on a dynamical trajectory $\tilde x_n$ of $\tilde N = 2000$ samples taken every $\widetilde{\Delta t} = 0.01$ time units. Note that $\widetilde{\Delta t} \ll 2 \pi / \alpha$ so the out-of-sample time series ($\zeta_j(\tilde x_0), \zeta_j(\tilde x_1), \ldots$) should resolve the fast oscillatory frequency of the system as well as a number of its harmonics.

\Cref{fig:eigs_torus} shows representative eigenfunction results from these computations, visualized as scatterplots of the real and imaginary parts of $\zeta_j$ on the 2-torus, traceplots of $\zeta_j(\tilde x_0), \zeta_j(\tilde x_1), \ldots$ in the complex plane, and the corresponding time series of $\Real\zeta_j(\tilde x_n)$ and $\Imag\zeta_j(\tilde x_n)$. The results are broadly consistent with the theoretical properties of Koopman eigenfunctions and eigenfrequencies mentioned above. In particular, the scatterplots of $\Real \zeta_j$ and $\Imag \zeta_j$ (first two columns of \cref{fig:eigs_torus} from the left) have the structure of plane waves consistent with the real and imaginary parts of Fourier functions. Moreover, the traceplots of $\zeta_j(\tilde x_n)$ (third column from the left) take values very close to the unit circle in $\mathbb C$, and the time series of $\Real\zeta_j(\tilde x_n)$ and $\Imag\zeta_j(\tilde x_n)$ (rightmost column) are near-perfect sinusoids with frequencies consistent with the computed eigenfrequencies $\omega_j$. In addition, as seen from the example of $\zeta_6$ in the third row of \cref{fig:eigs_torus}, the numerical results well-capture the multiplicative group structure of Koopman eigenfunctions and additive group structure of the corresponding eigenfrequencies. In this case, $\zeta_6$ exhibits a $(-1, 1)$ wavevector on the torus, and this is consistent with the product $\zeta_1 \zeta_3$ between eigenfunctions $\zeta_1$ and $\zeta_3$ (with wavevectors $(-1, 0)$ and $(0,1)$, respectively) shown in the first two rows. The corresponding eigenfrequency, $\omega_6 \approx 4.50$, is also consistent with $\omega_1 + \omega_3 \approx -1.00 + 5.49$ to two significant digits.

\begin{figure}
    \includegraphics[width=\linewidth, draft=false]{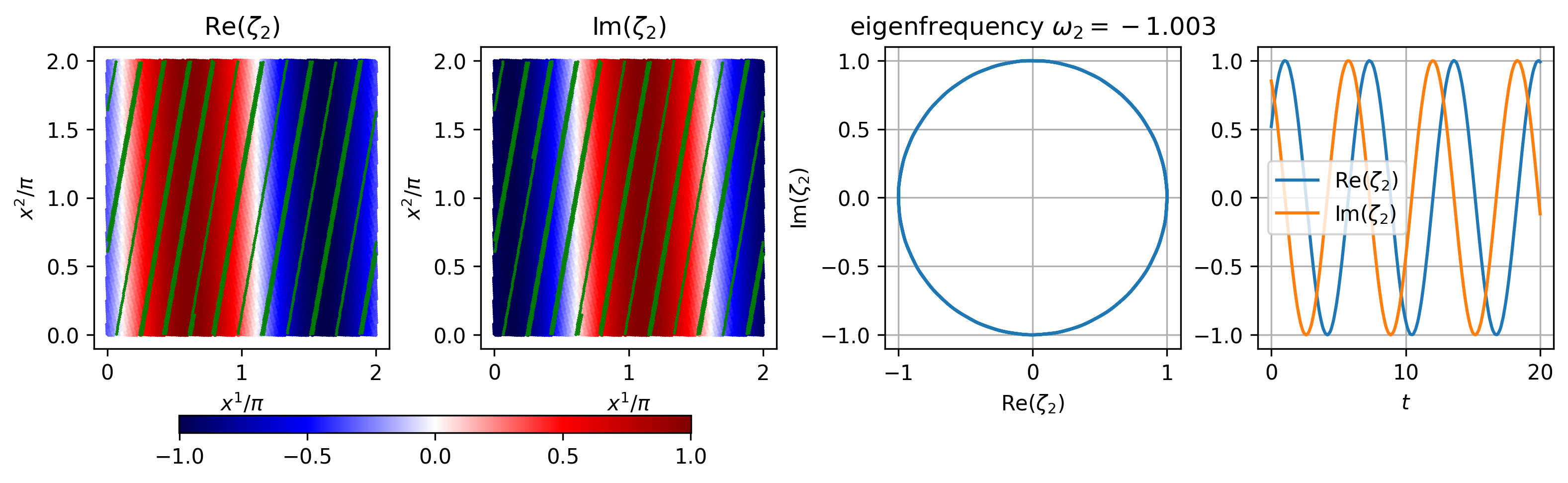}
    \includegraphics[width=\linewidth, draft=false]{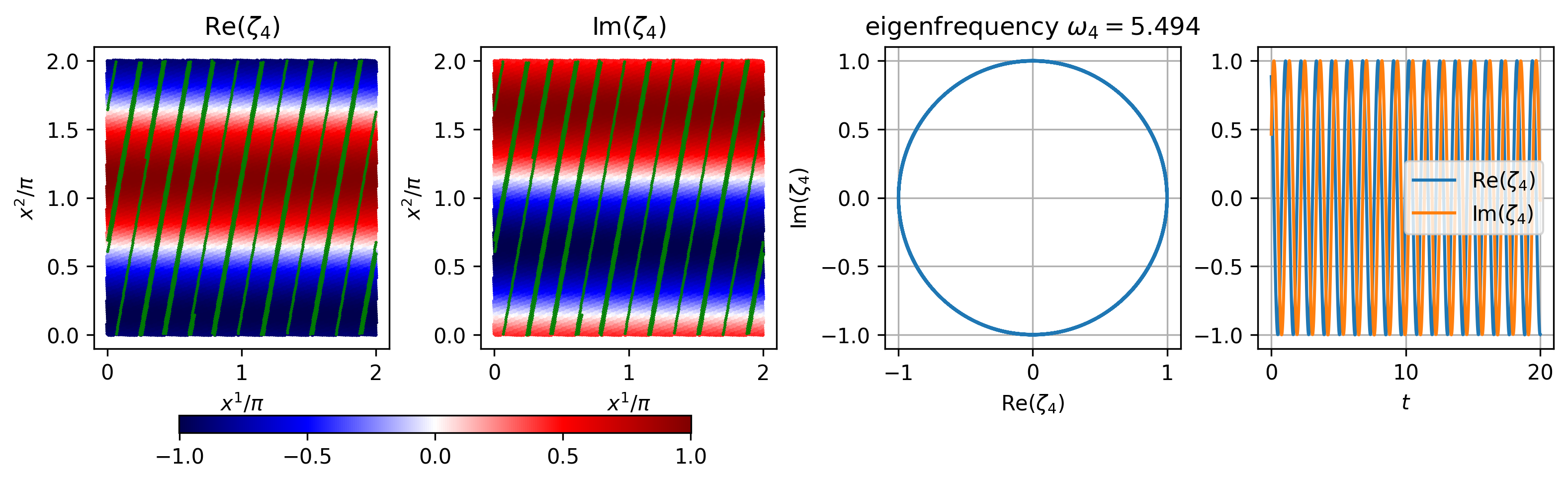}
    \includegraphics[width=\linewidth, draft=false]{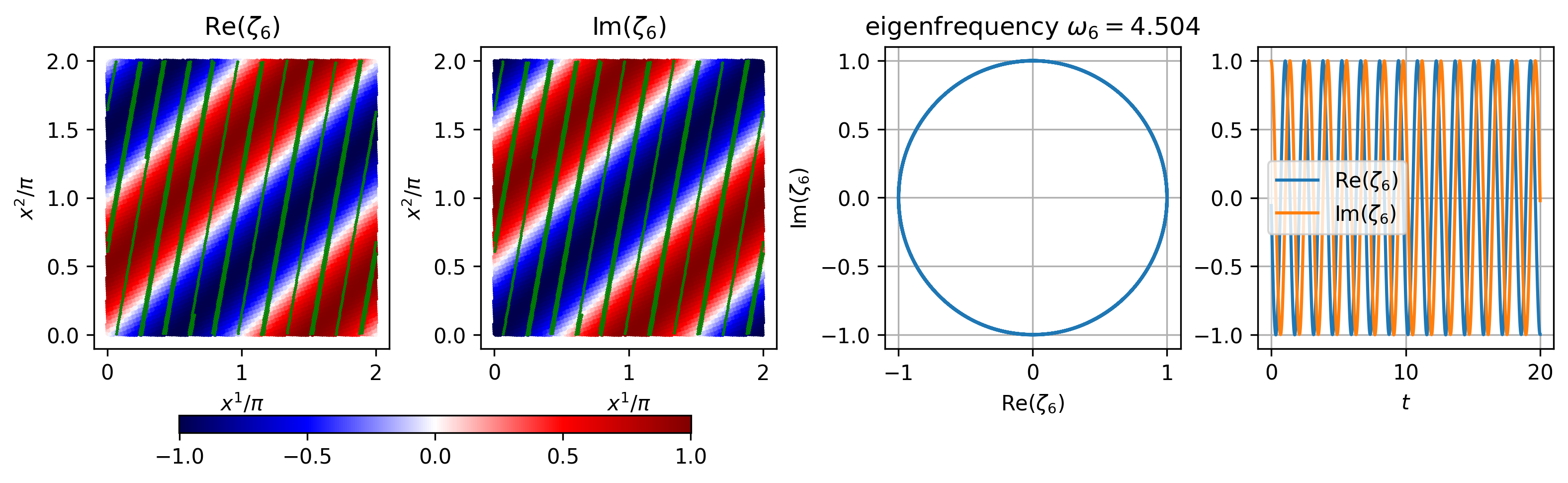}
    \caption{Representative generator eigenfunctions $\zeta_j$ for the linear torus rotation. In each row, the first two panels from the left show scatterplots of the real and imaginary parts of $\zeta_j(x_n)$ on the training data $x_n \in \mathbb T^2$. The third and fourth panels show the evolution of $\zeta_j(\tilde x_n)$ in the complex plane and time series of corresponding real and imaginary parts, sampled along the test trajectory $\tilde x_n$. The test trajectory $\tilde x_n$ is plotted with green lines in the first two panels for reference.}
    \label{fig:eigs_torus}
\end{figure}

In closing this subsection, we note that despite the highly structured nature of this system, accurate numerical spectral decompositions of the generator is non-trivial in approximation spaces of high dimension (here, $L$) since the point spectrum $\sigma_p(V) = \{ i (l_1 + l_2\alpha) \}_{l_1, l_2 \in \mathbb Z}$ is a dense subset of the imaginary line. Indeed, see the top row in \cite[Figure~5]{DasEtAl21} for an illustration of poor numerical performance of a naive approximation scheme for the generator as $L$ increases at fixed $N$ without using regularization.

\subsection{Stepanoff flow}
\label{sec:stepanoff}

Our next example comes from a class of Stepanoff flows on the 2-torus studied by \cite{Oxtoby53}. The dynamical vector field $\vec V\colon \mathbb T^2  \to \mathbb T^2$ has the coordinate representation $\vec V(x) = (V^1(x), V^2(x))$, where
\begin{equation}
    \label{eq:vec_stepanoff}
    V^1(x) = V^2(x) + (1- \alpha) (1 - \cos x^2), \quad V^2(x) = \alpha(1 - \cos(x^1 - x^2)),
\end{equation}
$x = (x^1, x^2) \in [0, 2\pi)^2$ and $\alpha$ is a real parameter. The Stepanoff vector field $\vec V$ has zero divergence with respect to the Haar measure $\mu$,
\begin{displaymath}
    \divr_\mu \vec V = \frac{\partial V^1}{\partial x^1} + \frac{\partial V^2}{\partial x^2} = 0,
\end{displaymath}
which implies that $\mu$ is an invariant measure under the associated flow $\Phi^t$. Thus, we have $\mathcal M = M = X = \mathbb T^2$ as in the linear rotation example from \cref{sec:torus}. Still, a major difference between the linear torus rotation and the Stepanoff flow is that the latter exhibits a fixed point at $x=0$, $\vec V(0) = 0$. In \cite{Oxtoby53} it is shown that the normalized Haar measure is the unique invariant Borel probability measure of this flow that assigns measure 0 to the singleton set $ \{ 0 \} \subset \mathbb T^2$ containing the fixed point.

Since any continuous, non-constant Koopman eigenfunction induces a semiconjugacy with a circle rotation of nonzero frequency, the existence of the fixed point at $x=0$ implies that the system has no continuous Koopman eigenfunctions; i.e., it is topologically weak-mixing. In fact, \cite{Oxtoby53} shows that when $\alpha$ is irrational the Stepanoff flow is topologically conjugate to a time-reparameterized irrational rotation on $\mathbb T^2$ with frequency parameters $(1, \alpha)$, for a time-reparameterization function that is singular at $x=0$. There are many examples of time-reparameterizations of pure-point-spectrum systems that yield measure-theoretically mixing dynamics \cite{KatokThouvenot06,Kocergin73}, including examples on tori \cite{Fayad02}. While, to our knowledge, and there are no results in the literature on the measure-theoretic mixing properties of Stepanoff flows, these considerations on time reparameterization in conjunction with the singularity of the time-parameterization function at $x=0$ suggest that the Koopman operator for the Stepanoff flow on $H=L^2(\mu)$ has non-trivial continuous spectrum. At the very least, the absence of continuous Koopman eigenfunctions implies that data-driven spectral computations are non-trivial for this class of systems even if eigenfunctions exist in $L^2(\mu)$.

Similarly to the linear rotation experiments from \cref{sec:torus}, we generate training data $y_n \in \mathbb R^4$ by embedding discrete-time samples $x_n \in \mathbb T^2$ of a dynamical trajectory under the Stepanoff flow using the flat embedding $F\colon \mathbb T^2 \to \mathbb R^4$. In this case, the flow $\Phi^t$ is not available analytically, so we compute the $x_n$ by numerical solution of the initial-value problem~\eqref{eq:v_phi}. We generate $N = \text{68,000}$ samples at a sampling interval $\Delta t = 3.0$. Using those samples, we compute basis functions $\phi_{j,N}$ as in \cref{sec:torus} and generator eigenpairs $(\omega_j, \zeta_j)$ using $L=1500$ basis functions and the parameters $z=0.1$, $\tau= 1 \times 10^{-6}$. We again generate a test dataset $\tilde x_n$ of $\tilde N = 2000$ samples taken on a dynamical trajectory, which independent of the training data every. The samples are taken at $\widetilde{\Delta t} = 0.01$ time units and we evaluate the continuous functions $\zeta_j$ on that dataset.

\Cref{fig:eigs_stepanoff} shows generator eigenfunction results obtained from these experiments. The layout of the plots is similar to that of the torus rotation results in \cref{fig:eigs_torus}. We see that despite the non-integrable nature of the dynamics, the depicted eigenfunctions $\zeta_j$ exhibit an approximate form of cyclicity. Particular eigenfrequencies, e.g., $\omega_{21} \approx 2.10 $ appear to be approximate harmonics of a basic frequency $\omega_3 \approx 1.03$. The level sets of the real and imaginary parts of the corresponding eigenfunctions $\zeta_j$ exhibit characteristic ``S-shaped'' patterns that are broadly aligned with the dynamical flow (see the second and third rows from the top in \cref{fig:eigs_stepanoff}).

\begin{figure}
    \includegraphics[width=\linewidth, draft=false]{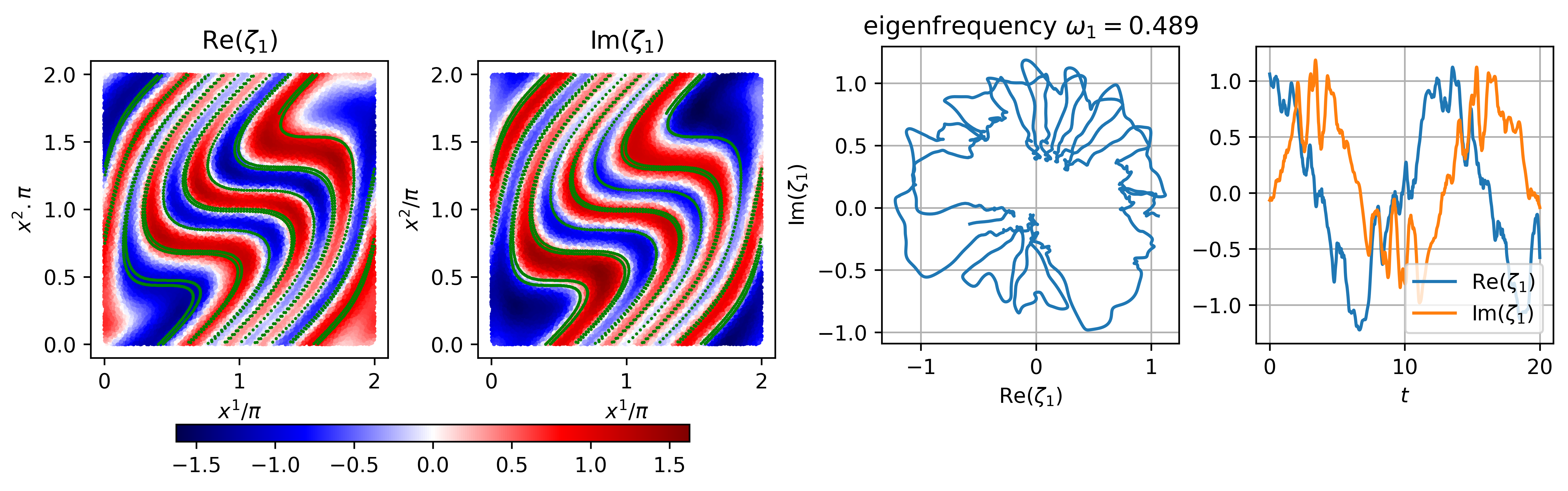}
    \includegraphics[width=\linewidth, draft=false]{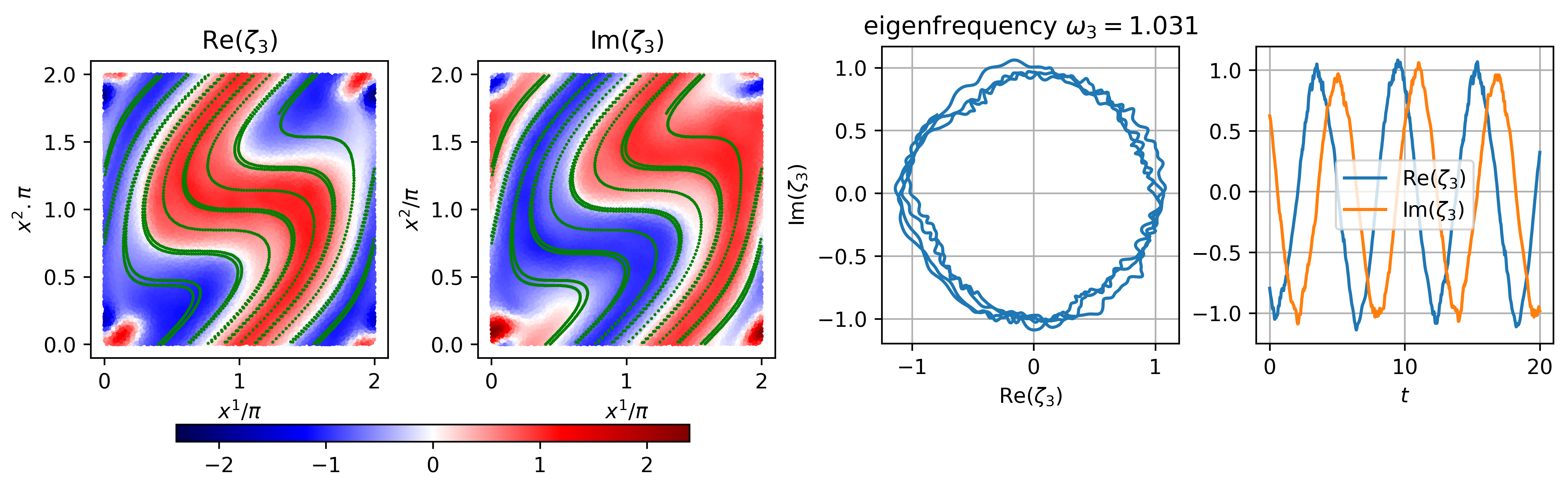}
    \includegraphics[width=\linewidth, draft=false]{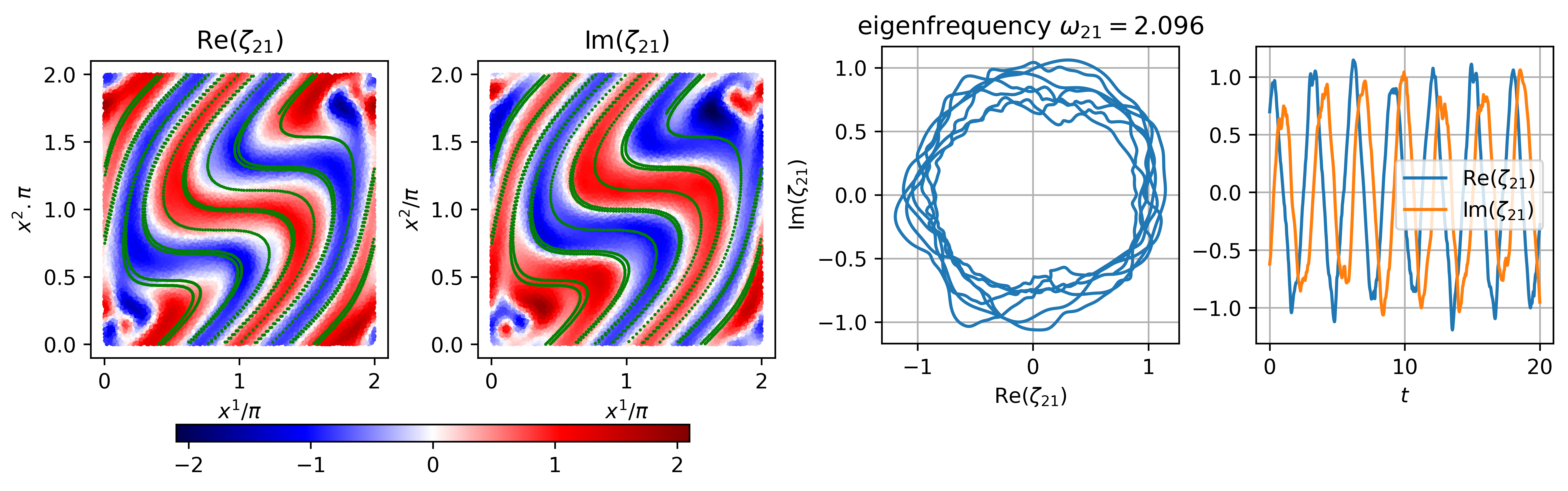}
    \includegraphics[width=\linewidth, draft=false]{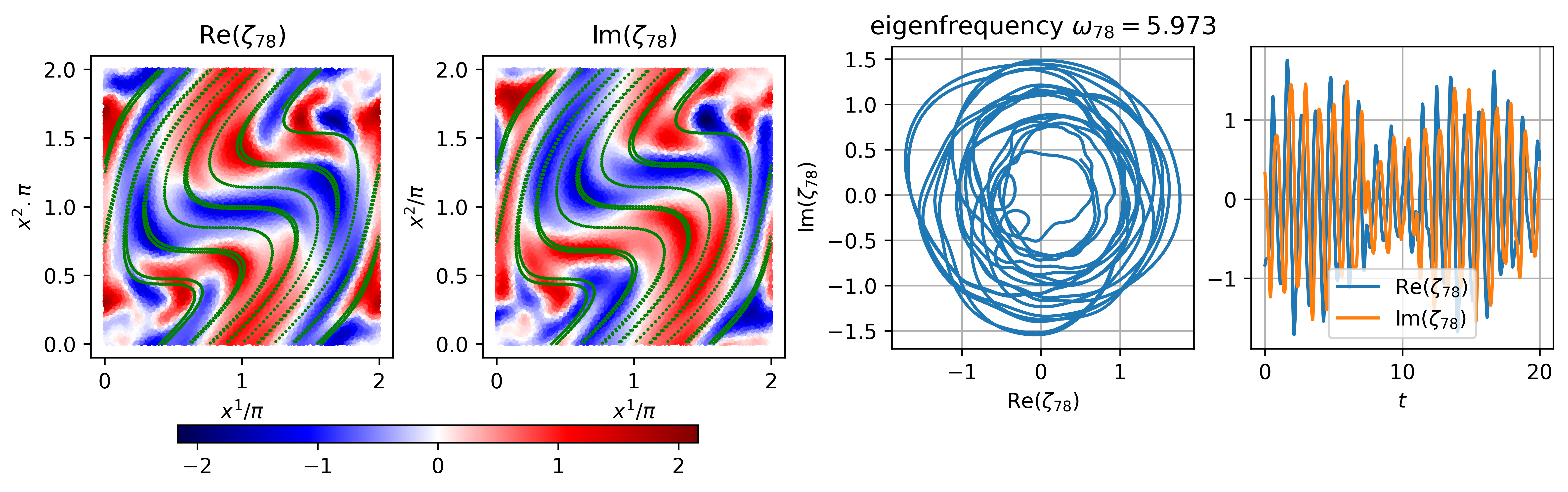}
    \caption{As in \cref{fig:eigs_torus}, but for regularized generator eigenfunctions for the Stepanoff flow.}
    \label{fig:eigs_stepanoff}
\end{figure}

A notable feature of these patterns is smaller-scale oscillations near the fixed point at $0$; this is particularly evident for the higher-frequency eigenfunction $\zeta_{97}$ shown in the bottom row of the figure. The presence of these small-scale oscillations is qualitatively consistent with the slowing down of the dynamical flow near the fixed point. In particular, an orbit passing through a region near the fixed point has to cross more eigenfunction wavefronts in a given time interval to produce consistent phase evolution $\zeta_j(\Phi^t(x)) \approx e^{i\omega_j t} \zeta_j(x) $ with more rapidly evolving orbits away from the fixed point.

Meanwhile, the leading nonconstant eigenfunction with respect to the Dirichlet energy ordering, $\zeta_1$, displayed in the top row of \cref{fig:eigs_stepanoff}, exhibits a similar S-shaped structure with weaker spatial variability near the fixed point and a correspondingly smaller eigenfrequency $\omega_1 \approx 0.49$. The corresponding time series exhibit oscillatory behavior that is generally consistent with a periodicity of $2\pi / \omega_1 \approx 12.8$, but the dominant low-frequency oscillations are evidently mixed with higher-frequency components.

As a final remark on the Stepanoff experiments, we mention that besides eigenfunctions such as $\zeta_1$, $\zeta_3$, and $\zeta_{21}$ (whose level sets are predominantly aligned with the dynamical flow), the numerical generator spectrum contains other eigenfunctions, e.g., $\zeta_{78}$ in the bottom row of \cref{fig:eigs_stepanoff}, featuring oscillations transverse to the flow and approximately cyclical time series behavior.

\subsection{Lorenz 63 system}
\label{sec:l63}

Our third numerical example is the L63 system on $\mathbb R^3$, generated by the vector field $\vec V \colon \mathbb R^3 \to T \mathbb R^3 \cong \mathbb R^3$, where
\begin{equation}
    \label{eq:v_l63}
    \vec V(x) = (V^1(x), V^2(x), V^3(x)) = (-\sigma(x^2 - x^1), x^1(\rho - x^3) - x^2, x^1 x^2 - \beta x^3)
\end{equation}
and $x = (x^1, x^2, x^3)$. We use the standard parameter values $\beta = 8/3$, $\rho = 28$, $\sigma = 10$, and the identity for the observation map $F\colon \mathbb R^3 \to \mathbb R^3$. For this choice of parameters, the L63 system is known to have a compact attractor $X \subset \bbR^3$ with fractal dimension $\approx 2.06$ that supports a unique (and observable) SRB measure $\mu$ \cite{Tucker99}. The system is also known to be mixing with respect to $\mu$ \cite{LuzzattoEtAl05}, which implies that the associated unitary group of Koopman operators $U^t$ has no nonzero eigenfrequencies. The $H_p$ subspace is then a one-dimensional space of  constant functions, while $H_c$ contains all zero-mean functions in $L^2(\mu)$ (i.e., $H_c = \tilde H$; see \cref{sec:spec_decomp}).

The L63 system with the standard parameter values is dissipative with respect to Lebesgue measure, $\divr_\text{Leb} \vec V = \sum_{i=1}^3 \frac{\partial V^i}{x^i} < 0$, and can be shown to possess compact absorbing balls containing $X$ \cite{LawEtAl14}. Setting the forward-invariant manifold $M$ to such an absorbing ball, we have $X \subset M \subset \mathcal M = \mathbb R^3$ and \crefrange{prty:K1}{prty:K6} are all rigorously satisfied. Compared to the linear rotation and Stepanoff flow, where the support of the invariant measure $\mu$ is a smooth manifold, $X= \mathbb T^2$, a challenging aspect of the L63 system is that $\mu$ is supported on a fractal set. The kernel eigenbasis $\phi_{j,N}$, provides an effective way of building Galerkin approximation spaces in such settings with invariant measures supported on geometrically complex sets.

In this example, we use a training dataset consisting of $N = \text{80,000}$ samples $y_n = x_n \in \mathbb R^3$ which are taken at a sampling interval of $\Delta t = 3$ time units from a numerical trajectory that was allowed to equilibrate near the Lorenz attractor. Note that $\Delta t$ is significantly longer than the Lyapunov timescale $1/\Lambda \approx 1.10$, where $\Lambda \approx 0.91$ is the positive Lyapunov exponent of the L63 system for the standard parameter values \cite{Sprott03}. As a result, we expect the training samples $x_n$ to exhibit a degree of statistical independence. Using this dataset, we compute generator eigenpairs $(\omega_j,\zeta_j)$ using $L=1000$ basis functions $\phi_{j,N}$ and the parameters $z=0.1$ and $\tau= 1 \times 10^{-5}$. As in \cref{sec:torus,sec:stepanoff}, we generate a test dataset $\tilde y_n = \tilde x_n$ of $\tilde N= 2000$ samples at higher temporal resolution $\widetilde{\Delta t} = 0.01$, and we evaluate the continuous functions $\zeta_j$ on the $\tilde x_n$ data.

\begin{figure}
    \includegraphics[width=\linewidth, draft=false]{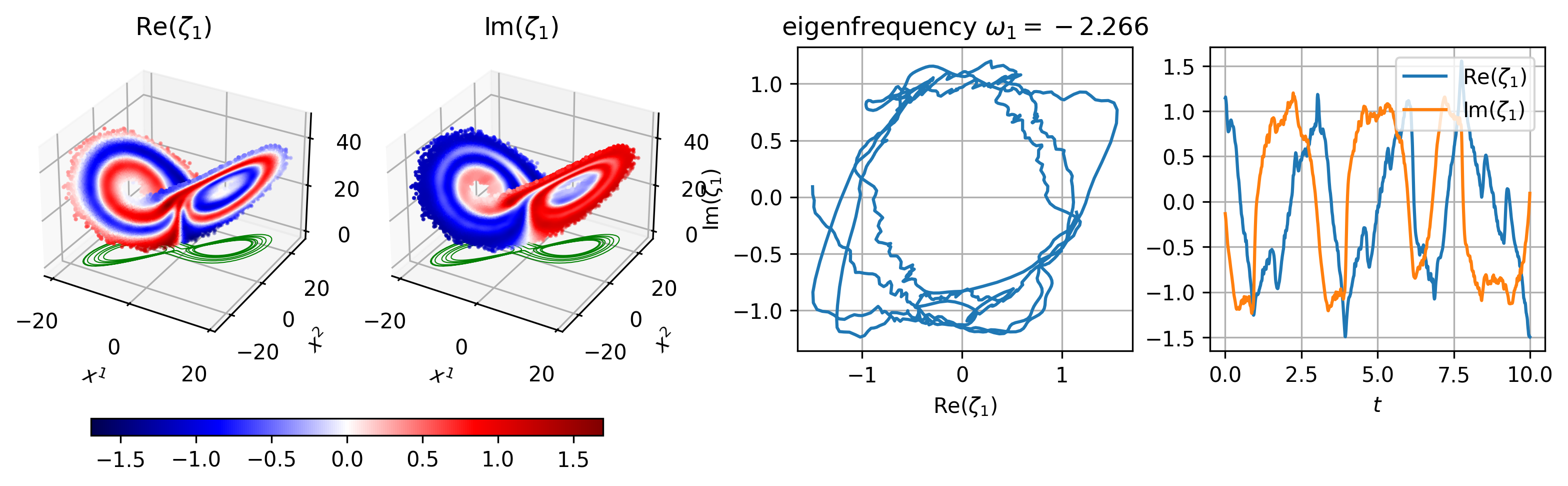}
    \includegraphics[width=\linewidth, draft=false]{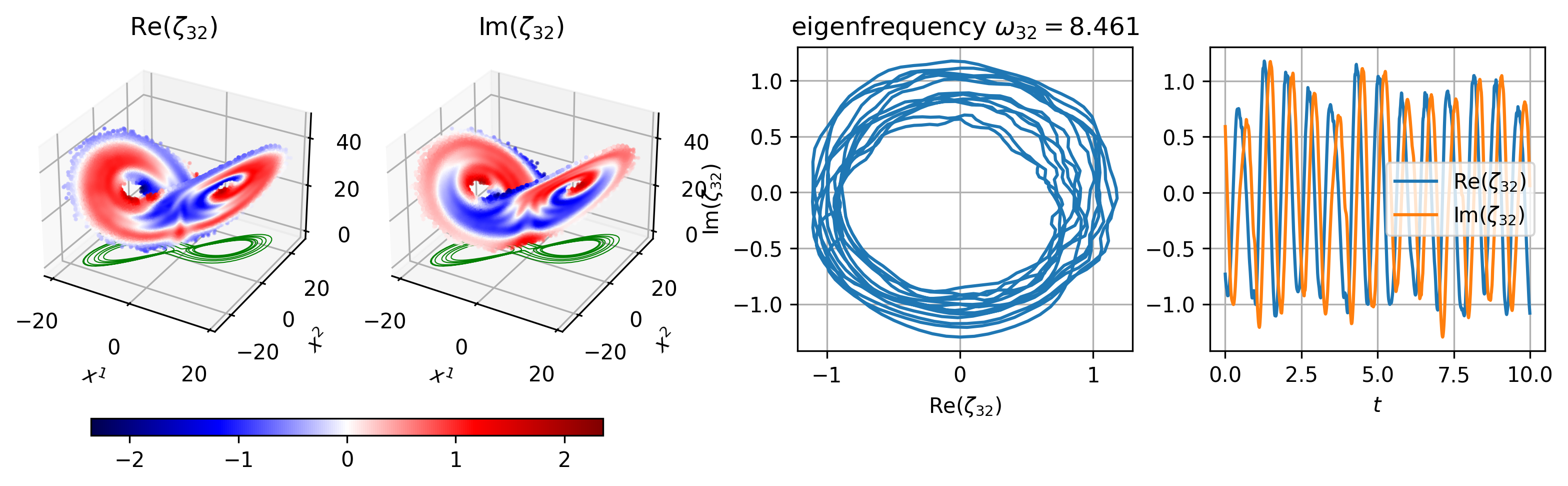}
    \includegraphics[width=\linewidth, draft=false]{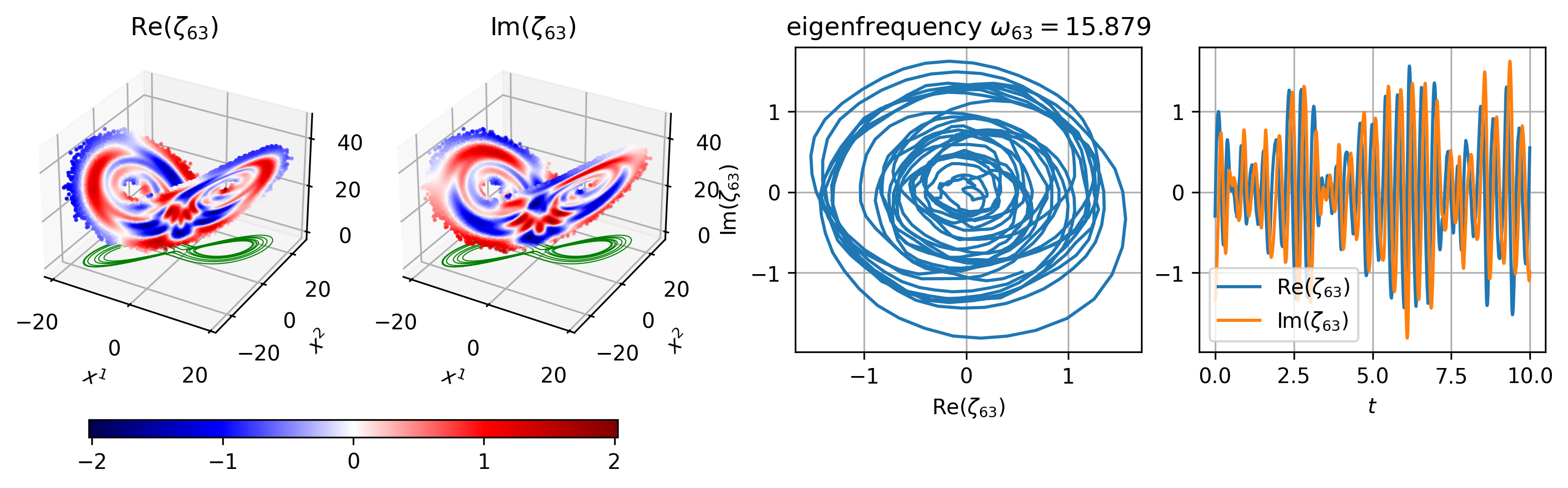}
    \includegraphics[width=\linewidth, draft=false]{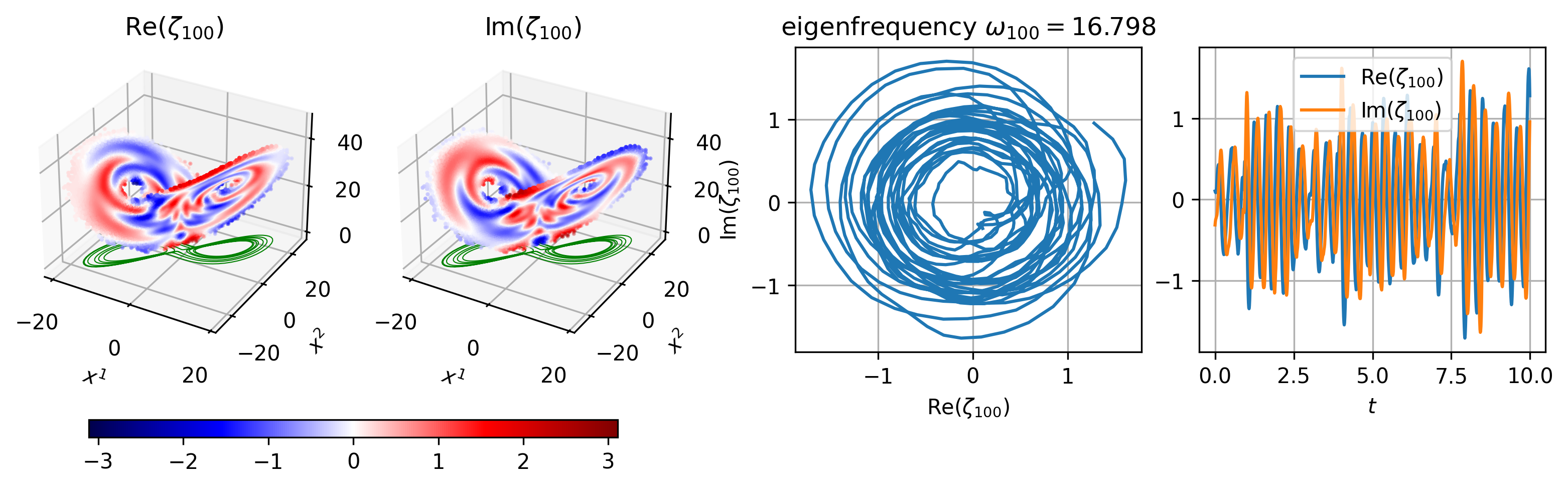}
    \caption{As in \cref{fig:eigs_torus}, but for regularized generator eigenfunctions for the L63 system.}
    \label{fig:eigs_l63}
\end{figure}

\Cref{fig:eigs_l63} shows representative eigenfunction results obtained from these experiments, visualized as scatterplots on the Lorenz attractor (first and second columns from the left), traceplots in the complex plane (third column from the left), and time series plots of the real and imaginary parts of $\zeta_j$ (rightmost column). Despite the mixing nature of the dynamics, the extracted eigenfunctions exhibit approximate cyclicity that persists on significantly longer timescales than the Lyapunov timescale of the system. One of these eigenfunctions, $\zeta_{32}$ (second row in \cref{fig:eigs_l63}), with corresponding eigenfrequency $\omega_j \approx 8.46$, shows a characteristic ``wavenumber-1'' pattern on the lobes on the attractor that qualitatively resembles results obtained by several other methods; see, e.g., \cite{KordaEtAl20,DasEtAl21,FroylandEtAl21,GiannakisValva24,ColbrookEtAl24}. Other results in \cref{fig:eigs_l63} include eigenfunction $\zeta_{100}$ (bottom row) with a corresponding eigenfrequency $\omega_{100} \approx 16.8$ that appears to be a ``wavenumber-2'' harmonic of $\zeta_{32}$. Additionally, the eigenfunction  $\zeta_{63}$ (third row) has an eigenfrequency $\omega_{63} \approx 15.9$ that is also close to $2 \omega_{32}$ but exhibits smaller-scale oscillations along the radial directions of the attractor lobes. As a lower-frequency example, eigenfunction $\zeta_1$ (top row), has an eigenfrequency of $\omega_1 \approx -2.23$ and a corresponding oscillatory timescale of $2\pi / \lvert \omega_1 \rvert \approx 2.78$ that is approximately 2.5 times longer than the Lyapunov timescale. Capturing such low-frequency coherent patterns is challenging with data-driven techniques, as evidenced by the noisier nature of the time series plots in the top row of \cref{fig:eigs_l63}. Nonetheless, the evolution of $\zeta_1$ is predominantly cyclical.

\section{Concluding remarks}
\label{sec:conclusions}

We have proposed a data-driven technique for spectral approximation of Koopman operators of continuous-time, measure-preserving, ergodic dynamical systems. A key feature of this method is that it is physics-informed, in the sense that it makes use of known equations of motion through the dynamical vector field without requiring access to pairs of snapshot data to approximate the Koopman operator. Another primary element is a bounded transformation of the Koopman generator, $V$, that renders it into a skew-adjoint operator that is amenable to compact approximations by smoothing with Markovian kernel integral operators. This leads to a two-parameter family $V_{z,\tau}$, $z,\tau>0$, of skew-adjoint operators on $L^2$ with compact resolvent, and thus with a discrete spectrum consisting entirely of eigenvalues and an associated orthonormal basis of eigenfunctions. Our main theoretical result, \cref{thm:spec-conv}, established spectral convergence of the family $V_{z,\tau}$ to the generator $V$ in the iterated limit of $z \to 0^+$ after $\tau\to 0^+$. The approximation of these operators in the data-driven setting converges in four successive limits: namely that of the number of data snapshots $N \to \infty$, basis functions $L \to \infty$, smoothing parameter $\tau\to 0^+$, and resolvent parameter $z \to 0^+$. In the discrete-time setting it has been shown that three successive limits are required for consistent spectral approximation of unitary Koopman operators of measure-preserving systmems  \citep{colbrook_limits_2024}. The use of iterated limits is thus likely to be necessary when accurately computing the spectrum of Koopman generators. Compared to our previous work, the physics-informed approach presented in this paper avoids having to take a limit of vanishing sampling interval, $\Delta t \to 0^+$, needed for finite-difference approximation of the generator \cite{DasEtAl21} or approximation of the Koopman resolvent via a Laplace transform \cite{GiannakisValva24}.

In addition, we have shown that the eigendecomposition of $V_{z,\tau}$ can be formulated as a generalized eigenvalue problem involving $V$ and the Markov smoothing operators. We developed variational Galerkin methods for approximating solutions of this problem in finite-dimensional spaces spanned by kernel eigenfunctions.  A noteworthy aspect of these approximation schemes is that they employ automatic differentiation to evaluate the action of the dynamical vector field on kernel functions, thus making direct use of equations of motion. Moreover, as with other kernel methods, these numerical schemes are well-suited for handling invariant measures supported on geometrically complex sets (e.g., fractal attractors) and/or measures with supports embedded in high-dimensional ambient data spaces. In particular, our usage of kernel integral operators serves the dual purpose of basis learning for Galerkin approximation and smoothing for spectral regularization of the generator.

The eigenfunctions of $V_{z,\tau}$ can be interpreted as generalizations of Koopman eigenfunctions of pure point spectrum systems, in the sense of providing an orthonormal basis of $L^2$ that contains elements with approximately cyclical behavior and slow decay of correlations. In addition, every eigenfunction in our construction has a representative in an RKHS of continuously differentiable functions, allowing out-of-sample evaluation by means of Nystr\"om operators that can be used, e.g., to reconstruct eigenfunction time series at high temporal resolution from coarsely sampled (in time) training data. We have demonstrated these features through a suite of numerical experiments involving low-dimensional systems with different spectral characteristics: (i) an ergodic torus rotation as a prototypical system with pure point spectrum; (ii) a Stepanoff flow on the 2-torus as an example with topological weak mixing (absence of continuous Koopman eigenfunctions) and smooth invariant measure; and (iii) the L63 system as an example with measure-theoretic mixing and invariant measure supported on a fractal set.

In terms of future work, it would be interesting to characterize limits of the eigenfunctions of $V_{z,\tau}$ in a space of distributions such as $H^{-1}$ (see \cref{sec:rkhs}), similarly to the analysis of \cite{ColbrookEtAl24} for unitary Koopman operators.  Another possible direction would be to employ the eigendecomposition of $V_{z,\tau}$ in supervised learning schemes for the Koopman evolution of observables.

\section*{Acknowledgments}

Dimitrios Giannakis acknowledges support from the US Office of Naval Research under MURI grant N00014-19-1-242, and the US Department of Defense, Basic Research Office under Vannevar Bush Faculty Fellowship grant N00014-21-1-2946. Claire Valva was supported by the US National Science Foundation Graduate Research Fellowship under grant DGE-1839302. The authors declare no competing interests.

\section*{Code availability}

Python code reproducing the numerical results in this paper is available at the repository \url{https://github.com/dg227/NLSA} under directory \url{Python/examples/resolvent_compactification}.

\appendix

\section{Auxiliary results}
\label{app:aux_results}

\subsection{Proof of \cref{prop:c1}.}
\label{app:proof_prop_c1}

First, note that when implemented with a data-adapted kernel $k$ such as the variable-bandwidth Gaussian kernel \eqref{eq:k_vb}, the bistochastic normalization procedure described in \cref{sec:markov} may start from a data-dependent kernel $k_N \colon \mathcal M \times \mathcal M \to \mathbb R$ approximating $k$; e.g.,
\begin{equation}
    \label{eq:k_vb_datadriven}
    k_N(x,y) = \exp \left( - \frac{\lVert x-y\rVert^2}{\epsilon^2 \rho_N(x)\rho_N(y)}\right),
\end{equation}
where $\rho_N \colon \mathcal M \to \mathbb R$ is a data-driven approximation of the bandwidth function $\rho$ in~\eqref{eq:k_vb}. For simplicity of exposition, in what follows we prove \cref{prop:c1} for a fixed, data-independent kernel $k$ whose restriction on $M \times M$ is $C^1$. A similar method of proof can be employed for data-dependent kernels $k_N$ that converge to $k$ as $N\to\infty$ in $C^1(M\times M)$ norm. This will hold, for instance, in the case of~\eqref{eq:k_vb_datadriven} where the bandwidth functions $\rho_N$ are built using kernel density estimation with $C^1$ kernels; see \cite{Giannakis19,DasEtAl21} for further details.

In the following, $C(M, T^*M)$ will denote the Banach space of continuous dual vector fields on $M$ (sections of the cotangent bundle $T^*M$), equipped with the norm $\lVert \alpha \rVert_{C(M,T^*M)} = \left\lVert x \mapsto \lVert \alpha(x)\rVert_{T^*_xM} \right\rVert_{C(M)}$. Here, $\lVert \cdot \rVert_{T^*_x(M)}$ is the norm on the dual tangent space $T_x^*M$ at $x \in M$ induced by some $C^1$ Riemannian metric on $M$ (all such choices of metric yield equivalent norms by compactness of $M$).  Letting also $d\colon C^1(M) \to C(M, T^*M)$ denote the exterior derivative, we can define the $C^1(M)$ norm on functions as
\begin{equation}
    \label{eq:c1_norm}
    \lVert f\rVert_{C^1(M)} = \lVert f \rVert_{C(M)} + \lVert df  \rVert_{C(M, T^*M)}.
\end{equation}

With $\kappa \in C^1(M \times M)$ and $\nu$ a Borel probability measure, we let $I_{\kappa,\nu} \colon C(M) \to C^1(M)$ be the integral operator defined as $I_{\kappa,\nu} f = \int_M \kappa(\cdot, x)f(x)\, d\nu(x)$ and $\mathfrak d_\kappa \in C(M \times M)$ the positive, continuous function $\mathfrak d_\kappa(x, y) = \lVert d_x\kappa(\cdot, y)\rVert_{T^*_x(M)}$. Using~\eqref{eq:c1_norm}, we get the following bound for the $C^1$ norm of $I_{\kappa,\nu} f$ that will be useful below,
\begin{align}
    \nonumber\lVert I_{\kappa,\nu} f\rVert_{C^1(M)} &= \lVert I_{\kappa,\nu} f  \rVert_{C(M)} + \lVert d I_{\kappa,\nu} f \rVert_{C(M, T^*M)} \\
                                     \nonumber & \leq \lVert \kappa\rVert_{C(M\times M)} \lVert f \rVert_{C(M)} + \lVert \mathfrak d_\kappa\rVert_{C(M \times M)} \lVert f\rVert_{C(M)} \\
                                     \label{eq:c1_bound} &= \lVert \kappa\rVert_{C^1(M\times M)} \lVert f\rVert_{C(M)}.
\end{align}
We also define the covector-field-valued integral operator $I_{d\kappa,\nu}\colon C(M) \to C(M, T^*M)$ as
\begin{displaymath}
    I_{d\kappa,\nu} f = \int_M d\kappa(\cdot, x) f(x) \, d\nu(x).
\end{displaymath}

Next, let $\mathbb E_\nu f = \int_M f \, d\nu$ denote the expectation of $f \in C(M)$ with respect to a Borel probability measure $\nu$ on $M$. Following \cite{VonLuxburgEtAl08}, we will employ the notion of Glivenko--Cantelli classes to identify sets of functions with uniform convergence of ergodic averages.

\begin{definition}
    Let $\nu_N$ be a sequence of Borel probability measures that converges, as $N\to \infty$, to $\nu$ in weak-$^*$ sense. A set $\mathcal G \subseteq C(M)$ is said to be a Glivenko--Cantelli class with respect to $\nu_N$ and $\nu$ if $\lim_{N\to\infty}\sup_{g \in \mathcal G} \lvert \mathbb E_{\nu_N} g - \mathbb E_\nu g  \rvert = 0$.
\end{definition}

If $\kappa\colon \mathcal M \times \mathcal M \to  \mathbb R$ and $f\colon M \to \mathbb R$ are continuous, it can be shown by results of \cite{VonLuxburgEtAl08} that the set $\mathcal G_{\kappa,f} = \{ \kappa(x,\cdot)f: x \in M \}$ is Glivenko--Cantelli with respect to the invariant measure $\mu$ and sampling measures $\mu_N$. For a compact manifold $M$, $\mathcal G_{d\kappa, f} = \{ y \mapsto d_x \kappa(\cdot, y)f(y): x \in M \}$ can also be shown to be a Glivenko--Cantelli class within $C(M, T^*M)$. We then have:

\begin{lemma}
    \label{lem:c1_conv}
    Let $\kappa_N \in C^1(M\times M)$ and $f_N \in C(M)$ be sequences of kernels and functions converging to $\kappa \in C^1(M \times M)$ and $f \in C(M)$, respectively, as $N \to \infty$. Then, $g_N = I_{\kappa_N, \mu_N} f_N$ converges to $g = I_{\kappa,\mu} f$ in $C^1(M)$.
\end{lemma}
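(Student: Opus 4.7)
\medskip

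\noindent\textbf{Proof plan.} The plan is to decompose $g_N - g$ via a triple-triangle-inequality, isolating the kernel convergence, function convergence, and measure convergence separately, and to bound each piece in $C^1(M)$ using either the operator bound~\eqref{eq:c1_bound} or the Glivenko--Cantelli property stated above the lemma.

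First I would write
\begin{equation*}
  g_N - g = I_{\kappa_N - \kappa,\,\mu_N} f_N \;+\; I_{\kappa,\,\mu_N}(f_N - f) \;+\; \bigl(I_{\kappa,\,\mu_N} f - I_{\kappa,\,\mu} f\bigr),
\end{equation*}
and bound each summand in $C^1(M)$. For the first two, the operator bound~\eqref{eq:c1_bound} applied to the data-independent probability measure $\mu_N$ (in whose derivation the only use of $\nu$ was that it is a probability measure) yields
\begin{align*}
  \lVert I_{\kappa_N - \kappa,\,\mu_N} f_N\rVert_{C^1(M)} &\le \lVert \kappa_N - \kappa\rVert_{C^1(M \times M)}\lVert f_N\rVert_{C(M)},\\
  \lVert I_{\kappa,\,\mu_N}(f_N - f)\rVert_{C^1(M)} &\le \lVert \kappa\rVert_{C^1(M \times M)}\lVert f_N - f\rVert_{C(M)}.
\end{align*}
Since $\kappa_N \to \kappa$ in $C^1(M\times M)$ and $f_N \to f$ in $C(M)$, and $\lVert f_N\rVert_{C(M)}$ is bounded, both right-hand sides tend to zero.

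The remaining, and main, task is to show that the measure-discrepancy term $h_N := I_{\kappa,\mu_N} f - I_{\kappa,\mu} f$ tends to $0$ in $C^1(M)$. For its $C(M)$ norm, note that for every $x \in M$,
\begin{equation*}
  h_N(x) = \mathbb{E}_{\mu_N}\bigl[\kappa(x,\cdot)f\bigr] - \mathbb{E}_\mu\bigl[\kappa(x,\cdot)f\bigr],
\end{equation*}
so $\lVert h_N\rVert_{C(M)} \le \sup_{g \in \mathcal G_{\kappa, f}} \lvert \mathbb E_{\mu_N} g - \mathbb E_\mu g\rvert \to 0$ by the Glivenko--Cantelli property of $\mathcal G_{\kappa,f}$. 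For the derivative, differentiation under the integral (justified by $\kappa \in C^1(M \times M)$ and compactness of $M$) gives $dh_N(x) = \mathbb E_{\mu_N}[d_x\kappa(\cdot,\cdot)f] - \mathbb E_\mu[d_x\kappa(\cdot,\cdot)f]$, and the Glivenko--Cantelli property of the covector-valued class $\mathcal G_{d\kappa,f}$ stated in the text yields
\begin{equation*}
  \lVert dh_N\rVert_{C(M, T^*M)} = \sup_{x \in M}\Bigl\lVert \mathbb E_{\mu_N}\bigl[d_x\kappa(\cdot,y)f(y)\bigr] - \mathbb E_\mu\bigl[d_x\kappa(\cdot,y)f(y)\bigr]\Bigr\rVert_{T^*_x M} \;\longrightarrow\; 0.
\end{equation*}
Combining the three bounds gives $\lVert g_N - g\rVert_{C^1(M)} \to 0$.

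The only nontrivial ingredient is the covector-valued Glivenko--Cantelli step, which the paper has already asserted before the lemma; once that is granted, the argument is essentially a routine triangle-inequality plus application of~\eqref{eq:c1_bound}. The main delicate point to verify carefully is that differentiation and integration commute uniformly in $x$, which follows from continuity of $d\kappa$ and compactness of $M$.
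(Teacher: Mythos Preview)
Your proof is correct and follows essentially the same approach as the paper: the same three-term telescoping decomposition (kernel change, function change, measure change), the same use of~\eqref{eq:c1_bound} for the first two terms, and the same appeal to the Glivenko--Cantelli classes $\mathcal G_{\kappa,f}$ and $\mathcal G_{d\kappa,f}$ for the $C(M)$ and $C(M,T^*M)$ parts of the third term.
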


\begin{proof}
    We split the estimate for $\lVert g_N - g\rVert_{C^1(M)}$ into three terms,
    \begin{displaymath}
        \lVert g_N -g \rVert_{C^1(M)} \leq T_{1,N} + T_{2,N} + T_{3,N},
    \end{displaymath}
    where
    \begin{align*}
        T_{1,N} &= \lVert I_{\kappa_N,\mu_N} f_N - I_{\kappa,\mu_N} f_N\rVert_{C^1(M)}, \\
        T_{2,N} &= \lVert I_{\kappa,\mu_N} f_N - I_{\kappa,\mu_N} f \rVert_{C^1(M)}, \\
        T_{3,N} &= \lVert I_{\kappa,\mu_N} f - I_{\kappa,\mu} f\rVert_{C^1(M)}.
    \end{align*}
    We can estimate $T_{1,N}$ and $T_{2,N}$ using~\eqref{eq:c1_bound} as
    \begin{align*}
    T_{1,N} &= \lVert (I_{\kappa_N} - I_{\kappa,\mu_N}) f_N\rVert_{C^1(M)} \leq \lVert \kappa_N - \kappa\rVert_{C^1(M\times M)} \lVert  f_N\rVert_{C(M)}, \\
        T_{2,N} &= \lVert I_{\kappa, \mu_N} (f_N - f)\rVert_{C^1(M)} \leq \lVert\kappa \rVert_{C^1(M\times M)} \lVert f_N - f\rVert_{C(M)},
    \end{align*}
    and we deduce that $\lim_{N\to\infty} T_{1,N} = \lim_{N\to\infty} T_{2,N} =0$ from the facts that $\lim_{N\to\infty} \lVert \kappa_N - \kappa \rVert_{C^1(M \times M)} = 0$ and $\lim_{N\to\infty}\lVert f_N - f\rVert_{C(M)} =0$. For $T_{3,N}$, we have
    \begin{displaymath}
        T_{3,N} = \lVert I_{\kappa,\mu_N} f - I_{\kappa,\mu} f\rVert_{C(M)} + \lVert I_{d\kappa,\mu_N} f - I_{d\kappa,\mu} f\rVert_{C(M, T^*M)},
    \end{displaymath}
    and it follows that $\lim_{N\to\infty} T_{3,N} = 0$ since $\mathcal G_{\kappa,f}$ and $ \mathcal G_{d\kappa, f}$ are Glivenko--Cantelli classes.
\end{proof}

We will now use \cref{lem:c1_conv} repeatedly to prove the proposition.

First, consider the normalization procedure used to construct the asymmetric kernel function $\hat k_N\colon \mathcal M \times \mathcal M \to \mathbb R$ (the data-driven analog of $\hat k$ from \eqref{eq:k_asym}):
\begin{displaymath}
    d_N = \int_M k(\cdot, x) \, d\mu_N(x), \quad q_N = \int_M \frac{k(\cdot, x)}{d_N(x)} \, d\mu_N(x), \quad \hat k_N(x, y) = \frac{k(x,y)}{d_N(x) q^{1/2}_N(y)}.
\end{displaymath}
Applying \cref{lem:c1_conv} for $\kappa= k$ and $f_N=\bm 1$, it follows that $d_N$ converges to $d$ in $C^1(M)$, and so does $1/d_N$ to $1/d$ since $1/d$ is strictly positive on the compact manifold $M$. Thus, from \cref{lem:c1_conv} with $\kappa = k$ and $f_N = 1 / d_N$, we get $C^1(M)$ convergence of $q_N$ to $q$. We therefore deduce $C^1(M \times M)$ convergence of $\hat k_N$ to $\hat k$.

Consider now the integral operators $\hat K_N \colon \hat H_N \to C^1(M)$, $\hat K_N^\top \colon \hat H_N \to C^1(M) $, and $\tilde K_N\colon \hat H_N \to \hat H_N$, where
\begin{displaymath}
    \hat K_N f = \int_M \hat k_N(\cdot, x)f(x) \, d\mu_N(x), \quad \hat K_N^\top f = \int_M \hat k_N(x, \cdot)f(x) \, d\mu_N(x),
\end{displaymath}
and $\tilde K_N = \iota_N \hat K_N$. Consider also an SVD of $\tilde K_N$,
\begin{displaymath}
    \tilde K_N = \sum_{j=0}^{N-1} \phi_{j,N} \sigma_{j,N} \langle \gamma_{j,N}, \cdot \rangle_N
\end{displaymath}
where $\sigma_{j,N} \geq 0$ are singular values, and $\{\phi_{j,N}\}_{j=0}^{N-1}$ and $\{\gamma_{j,N}\}_{j=0}^{N-1}$ are orthonormal bases of $\hat H_N$ consisting of left and right singular vectors, respectively. Since $\tilde K_N \tilde K_N^* \equiv G_N$ and $\tilde K_N^* \tilde K_N$ converge spectrally to $\tilde K \tilde K^* \equiv G$ and $\tilde K^* \tilde K$, respectively, for every $j \in \mathbb N$ there exists $N_* > 0$ such that, by strict positivity of $\sigma_j$, $\sigma_{j,N} > 0$ for all $N > N_*$. For every such $N$, the corresponding singular vectors $\phi_{j,N}$ and $\gamma_{j,N}$ have $C^1(M)$ representatives
\begin{displaymath}
    \varphi_{j,N} = \frac{1}{\sigma_{j,N}} \hat K_N \gamma_{j,N}, \quad  \tilde\gamma_{j,N} = \frac{1}{\sigma_{j,N}} \hat K_N^\top \phi_{j,N}.
\end{displaymath}
Moreover, for every choice of left and right singular vectors $\phi_j$ and $\gamma_j$ of $\hat K$ corresponding to $\sigma_j$, with continuous representatives $\varphi_j$ and $\tilde\gamma_j$, respectively, there exist sequences $(\phi_{j,N})_N$ and $(\gamma_{j,N})_N$ of left and right singular vectors of $\tilde K_N$ such that $\lim_{N\to\infty} \varphi_{j,N} = \varphi_j$ and $\lim_{N\to\infty} \tilde\gamma_{j,N} = \tilde \gamma_j $ in $C(M)$. Here, $\tilde \gamma_j$ is the continuous representative of $\gamma_j$ defined analogously to $\tilde \gamma_{j,N}$ using an integral operator $\hat K^\top\colon H \to C^1(M)$.

We complete the proof by verifying that the convergence of $\tilde\varphi_{j,N}$ to $\varphi_j$ is, in fact, in $C^1(M)$ norm. Indeed, since
\begin{displaymath}
    \varphi_{j,N} = \frac{1}{\sigma_{j,N}} \int_M \hat k_N(\cdot, x)\gamma_{j,N}(x)\, d\mu_N(x) = \frac{1}{\sigma_{j,N}} \int_M \hat k_N(\cdot, x)\tilde \gamma_{j,N}(x)\, d\mu_N(x) = \frac{1}{\sigma_{j,N}} I_{\hat k_N, \mu_N} \tilde \gamma_{j,N},
\end{displaymath}
it follows from \cref{lem:c1_conv} with $\kappa = \hat k_N$ and $f_N = \tilde \gamma_{j,N}$, and the fact that $\sigma_{j,N} \to \sigma_j$, that $\varphi_{j,N}$ converges to $\varphi_j$ in $C^1(M)$, proving the proposition.

\subsection{Results from the Borel functional calculus}
\label{app:bound}

Let $\mathbb H$ be a Hilbert space, $\Sigma$ a $\sigma$-algebra on a set $\mathbb X$, and $E \colon \Sigma \to B(H)$ a PVM. We recall the following standard results from the Borel functional calculus \cite[e.g.,][]{Folland16}:
\begin{itemize}
    \item For every $f \in \mathbb H$, the map $E_f\colon \Sigma \to \mathbb R_+$ with $E_f(S) = \langle f, E(S) f \rangle_{\mathbb H} $ is a positive finite measure.
    \item For every bounded $\Sigma$-measurable function $g\colon \mathbb X \to \mathbb C$ and every $f \in \mathbb H$, we have $\lVert G f \rVert_{\mathbb H}^2 = \int_{\mathbb X} \lvert g \rvert^2 \, dE_f$, where $G = \int_{\mathbb X} g \, dE$.
\end{itemize}

The following lemma is used in the proof of \cref{thm:vz_conv}.

\begin{lemma}
    Let $A\colon D(A) \to \mathbb H$ be a self-adjoint operator on a Hilbert space $\mathbb H$ with associated PVM $E\colon \mathcal B(\mathbb R) \to B(\mathbb H)$. Then, for every bounded Borel-measurable function $g\colon \mathbb R \to \mathbb C$ and Borel-measurable set $S \subseteq \mathbb R$, we have $ \lVert G_S f \rVert_{\mathbb H} \leq \lVert g \rVert_\infty \sqrt{E_f(S)}$, where $G_S = \int_{\mathbb S} g \, dE$.
    \label{lem:bound}
\end{lemma}

\begin{proof}
    We have $G_S = g_S(A)$, where $g_S = \chi_S g $ and $\chi_S \colon \mathbb R \to \mathbb R$ is the characteristic function of $S$. Then, by the properties of the Borel functional calculus listed above, we get
    \begin{displaymath}
        \lVert G_S f \rVert^2_{\mathbb H} = \int_{\mathbb X} \lvert g_S \rvert^2 \, dE_f \leq \lVert g_S\rVert^2_\infty E_f(S) \leq \lVert g\rVert_\infty^2 E_f(S). \qedhere
    \end{displaymath}
\end{proof}

\section{Variable-bandwidth kernels}
\label{app:vbkernel}
Here, we briefly describe the construction of the variable-bandwidth kernels employed for the numerical experiments in \cref{sec:examples}. In the applications that we consider, the state space $\mathcal M$ is embedded in a data space $Y$ by means of a map $F\colon \mathcal M \to Y$. We assume that $Y$ has an appropriate differentiable structure such that the restriction of $F$ onto $M$ is $C^1$. In this scenario, it is natural to work with kernels $k$ obtained by pullbacks of kernels $k^{(Y)} \colon Y \times Y \to \mathbb R$ on data space, i.e., $k(x,y) = k^{(Y)}(F(x), F(y))$.  As a concrete example, we mention the Gaussian radial basis function (RBF) kernel on $Y = \mathbb R^d$,
\begin{equation}
    \label{eq:rbf}
    k^{(Y)}(x, y) = k^\text{RBF}_\epsilon(x,y) \equiv  e^{- \lVert x- y\rVert^2 / \epsilon^2},
\end{equation}
where $\lVert \cdot \rVert$ is the Euclidean 2-norm and $\epsilon>0$ a bandwidth parameter.

In the experiments of \cref{sec:examples} we use a variable-bandwidth generalization  \cite{BerryHarlim16} of the Gaussian RBF, given by
\begin{equation}
    \label{eq:k_vb}
    k^{(Y)}(x,y) = \exp \left( - \frac{\lVert x-y\rVert^2}{\epsilon^2 r(x)r(y)}\right).
\end{equation}
Here, $r \in C^1(Y)$ is a strictly positive bandwidth function designed to take large (small) values in regions of small (large) sampling density of the data relative to Lebesgue measure on data space. In this paper, we  employ the bandwidth function $r(x) = \rho_N^{-1/m}(x)$, where $m \in \mathbb R$ is a numerically computed dimension parameter for the support of $\mu$, and $\rho_N$ is a density function given by
\begin{gather*}
    \rho_N(x) = \int_M \bar p_N(x, y) \, d\mu_N(y)
\end{gather*}
for a Markov kernel $\bar p_N \colon M \times M \to \mathbb R$. Setting $\bar k(x, y) = k^\text{RBF}_{\bar\epsilon}(F(x), F(y))$ for a bandwidth parameter $\bar \epsilon$ independent from $\epsilon$, we build $\bar p_N$ by applying to $\bar k$ the normalization procedure introduced in the diffusion maps algorithm \cite{CoifmanLafon06}:
\begin{displaymath}
   \bar p(x, y)= \frac{\bar k(x, y)}{\bar d_N(x) \bar q_N(x) \bar q_N(y)}, \quad d_N(x) = \int_M \frac{\bar k(x, y)}{\bar q_N(x) \bar q_N(y)} \, d\mu_N(y), \quad \bar q_N(x) = \int_M \bar k(x,y) \, d\mu_N(y).
\end{displaymath}
Moreover, we tune the bandwidth parameters $\epsilon$ and $\bar\epsilon$ automatically using a variant of a procedure proposed by \cite{CoifmanEtAl08}. This procedure also yields the dimension parameter $m$.

With this choice of kernel $\bar p_N$, if $\mu$ is a smooth measure supported on a Riemannian manifold then $\rho_N$ approximates the density of $\mu$ relative to the Riemannian volume measure. It can be shown (e.g., \cite{Giannakis19}) that as $\bar \epsilon$ and $\epsilon$ become small, the resulting Markov-normalized variable-bandwidth kernel approximates the heat kernel associated with a conformally transformed Riemannian metric whose volume measure has uniform density relative to $\mu$. In other words, the kernel-induced Riemannian geometry ``balances out'' the sampling distribution of the training data.

While in many applications $\mu$ is not smooth, empirically we find that the variable-bandwidth procedure improves the regularity of numerically computed eigenfunctions. For example, see \cref{fig:phi} and \cref{fig:phi_nb} for a comparison between kernel eigenfunctions computed on the Lorenz attractor with and without the use of variable-bandwidth kernels. The addition of the variable-bandwidth step leads to considerably smoother eigenfunctions $\phi_{j,N}$ and their directional derivatives under the dynamical vector field, particularly in regions near the boundary of the attractor where sampling is sparse.

For further details on the construction of the kernel~\eqref{eq:k_vb} and the bandwidth tuning algorithm as used in this paper we refer the reader to Appendices~A of \cite{DasEtAl21,Giannakis19}, as well as the code repository accompanying this paper.

\begin{figure}
    \includegraphics[width=\linewidth,draft=false]{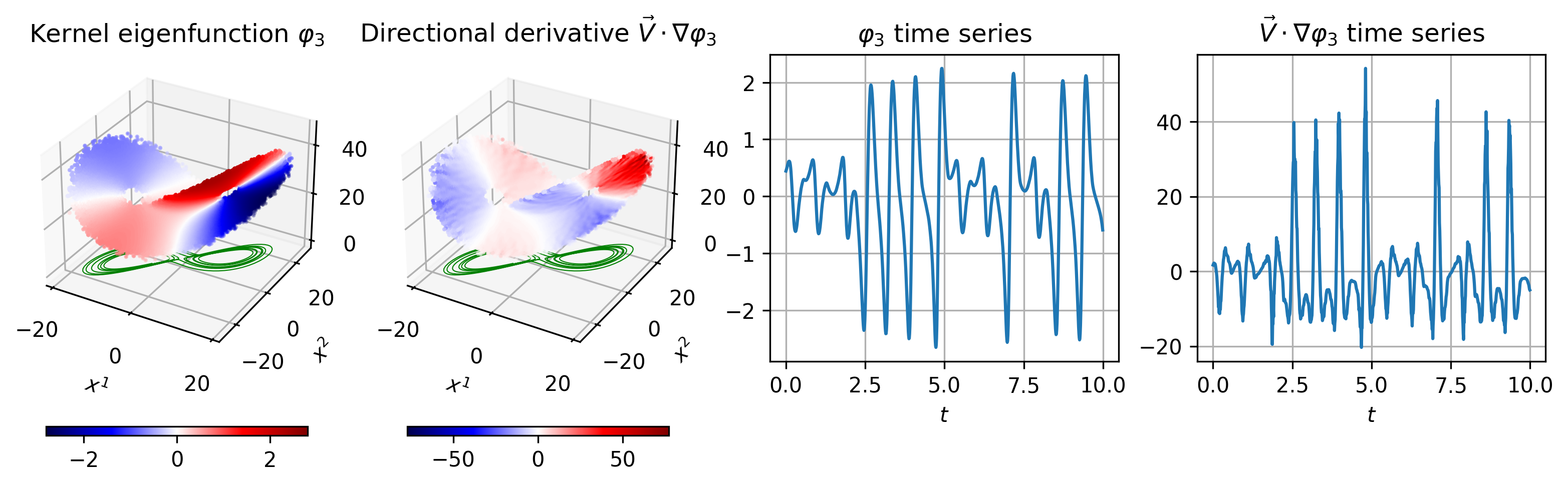}
    \includegraphics[width=\linewidth,draft=false]{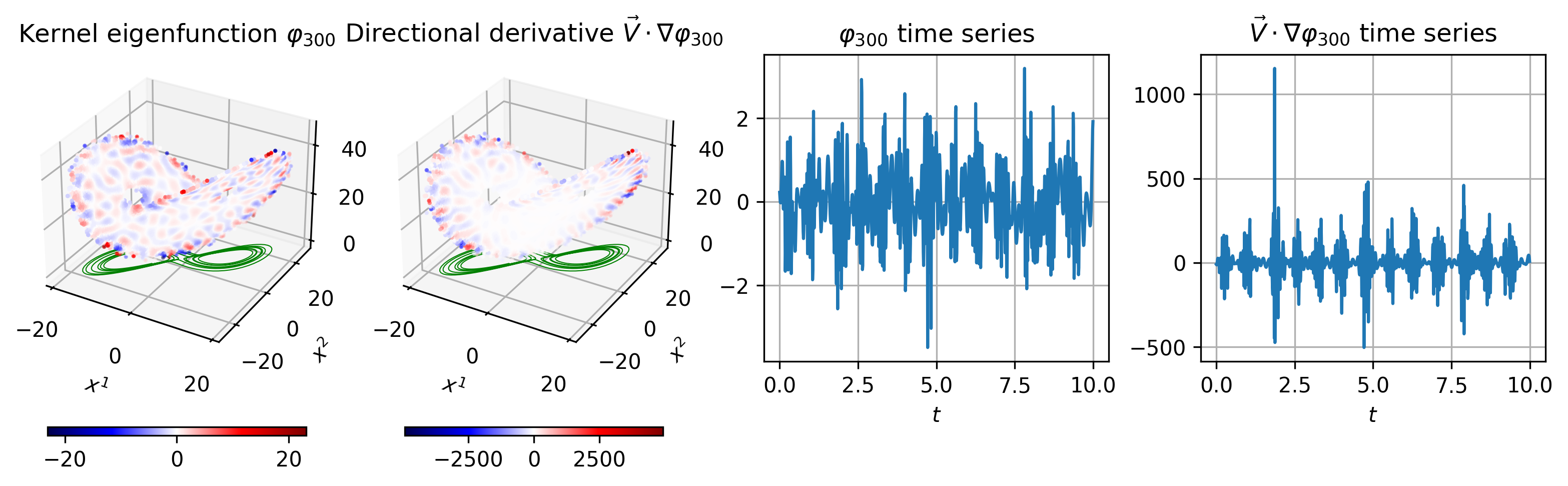}
    \caption{As in \cref{fig:phi}, but for eigenfunctions $\phi_{j,N}$ and directional derivatives $\varphi_{j,N}'$ computed using bistochastic Markov normalization of the fixed-bandwidth kernel~\eqref{eq:rbf}. While leading eigenfunctions are qualitatively similar to their variable-bandwidth counterparts (e.g., compare $\phi_{3,N}$ in the top row with the corresponding eigenfunction in \cref{fig:phi}), eigenfunctions further down the spectrum become significantly less regular. For example, $\phi_{300,N}$ shown in the bottom row is dominated by ``spikes'' near the boundary of the attractor where sampling is relatively sparser. The lack of regularity affects the directional derivatives $\varphi'_{j,N}$, as manifested by the spiking behavior of the $\varphi'_{300,N}$ time series in this example (bottom right panel).}
    \label{fig:phi_nb}
\end{figure}

% \bibliographystyle{elsarticle-num}
% \biboptions{sort&compress}
% \bibliography{referencesDG.bib,referencesCV.bib}

\begin{thebibliography}{10}
\expandafter\ifx\csname url\endcsname\relax
  \def\url#1{\texttt{#1}}\fi
\expandafter\ifx\csname urlprefix\endcsname\relax\def\urlprefix{URL }\fi
\expandafter\ifx\csname href\endcsname\relax
  \def\href#1#2{#2} \def\path#1{#1}\fi

\bibitem{Baladi00}
V.~Baladi, Positive Transfer Operators and Decay of Correlations, Vol.~16 of Advanced Series in Nonlinear Dynamics, World Scientific, Singapore, 2000.

\bibitem{EisnerEtAl15}
T.~Eisner, B.~Farkas, M.~Haase, R.~Nagel, Operator Theoretic Aspects of Ergodic Theory, Vol. 272 of Graduate Texts in Mathematics, Springer, Cham, 2015.

\bibitem{Froyland99}
G.~Froyland, Ulam's method for random interval maps, Nonlinearity 12~(4) (1999) 1029--1052.
\newblock \href {https://doi.org/10.1088/0951-7715/12/4/318} {\path{doi:10.1088/0951-7715/12/4/318}}.

\bibitem{DellnitzJunge99}
M.~Dellnitz, O.~Junge, On the approximation of complicated dynamical behavior, SIAM J. Numer. Anal. 36 (1999) 491.
\newblock \href {https://doi.org/10.1137/S0036142996313002} {\path{doi:10.1137/S0036142996313002}}.

\bibitem{Mezic05}
I.~Mezi\'c, Spectral properties of dynamical systems, model reduction and decompositions, Nonlinear Dyn. 41 (2005) 309--325.
\newblock \href {https://doi.org/10.1007/s11071-005-2824-x} {\path{doi:10.1007/s11071-005-2824-x}}.

\bibitem{KlusEtAl18}
S.~Klus, F.~N\"uske, P.~Koltai, H.~Wu, I.~Kevrekidis, C.~Sch{\"u}tte, F.~No{\'e}, Data-driven model reduction and transfer operator approximation, J. Nonlinear Sci. 28 (2018) 985--1010.
\newblock \href {https://doi.org/10.1007/s00332-017-9437-7} {\path{doi:10.1007/s00332-017-9437-7}}.

\bibitem{BruntonEtAl22}
S.~L. Brunton, M.~Budisi{\'c}, E.~Kaiser, J.~N. Kutz, Modern {K}oopman theory for dynamical systems, SIAM Rev. 64~(2) (2022) 229--340.
\newblock \href {https://doi.org/10.1137/21M1401243} {\path{doi:10.1137/21M1401243}}.

\bibitem{OttoRowley21}
S.~E. Otto, C.~W. Rowley, Koopman operators for estimation and control of dynamical systems, Annu. Rev. Control Robot. Auton. Syst. 4 (2021) 59--87.
\newblock \href {https://doi.org/10.1146/annurev-control-071020-010108} {\path{doi:10.1146/annurev-control-071020-010108}}.

\bibitem{Colbrook24}
M.~Colbrook, The multiverse of dynamic mode decomposition algorithms, in: Handbook of Numerical Analysis, Amsterdam, 2024, p.~88.

\bibitem{Mezic13}
I.~Mezi\'c, Analysis of fluid flows via spectral properties of the {K}oopman operator, Annu. Rev. Fluid Mech. 45 (2013) 357–--378.
\newblock \href {https://doi.org/10.1146/annurev-fluid-011212-140652} {\path{doi:10.1146/annurev-fluid-011212-140652}}.

\bibitem{FroylandEtAl21}
G.~Froyland, D.~Giannakis, B.~Lintner, M.~Pike, J.~Slawinska, Spectral analysis of climate dynamics with operator-theoretic approaches, Nat. Commun. 12 (2021).
\newblock \href {https://doi.org/10.1038/s41467-021-26357-x} {\path{doi:10.1038/s41467-021-26357-x}}.

\bibitem{WuEtAl17}
H.~Wu, F.~N{\"u}ske, F.~Paul, S.~Klus, P.~Koltai, F.~No{\'e}, Variational {K}oopman models: {S}low collective variables and molecular kinetics from short off-equilibrium simulations, J. Chem. Phys. 146 (2017).
\newblock \href {https://doi.org/10.1063/1.4979344} {\path{doi:10.1063/1.4979344}}.

\bibitem{SusukiEtAl16}
Y.~Susuki, I.~Mezi{\'c}, F.~Raak, T.~Hikihara, Applied {K}oopman operator theory for power systems technology, NOLTA 7~(4) (2016) 430--459.
\newblock \href {https://doi.org/10.1587/nolta.7.430} {\path{doi:10.1587/nolta.7.430}}.

\bibitem{MauroyEtAl20}
A.~Mauroy, I.~Mezi{\'c}, Y.~Susuki (Eds.), The {K}oopman Operator in Systems and Control, no. 484 in Lecture Notes in Control and Information Sciences, Springer, 2020.
\newblock \href {https://doi.org/10.1007/978-3-030-35713-9} {\path{doi:10.1007/978-3-030-35713-9}}.

\bibitem{Walters81}
P.~Walters, An Introduction to Ergodic Theory, Vol.~79 of Graduate Texts in Mathematics, Springer-Verlag, New York, 1981.

\bibitem{Tucker99}
W.~Tucker, The {L}orenz attractor exists, C. R. Acad. Sci. Paris, Ser. I 328 (1999) 1197--1202.

\bibitem{LuzzattoEtAl05}
S.~Luzzatto, I.~Melbourne, F.~Paccaut, The {L}orenz attractor is mixing, Comm. Math. Phys. 260~(2) (2005) 393--401.

\bibitem{RaissiEtAl19}
M.~Raissi, P.~Perdikaris, G.~E. Karniadakis, Physics-informed neural networks: {A} deep learning framework for solving forward and inverse problems involving nonlinear partial differential equations, J. Comput. Phys. 378 (2019) 686--707.
\newblock \href {https://doi.org/10.1016/j.jcp.2018.10.045} {\path{doi:10.1016/j.jcp.2018.10.045}}.

\bibitem{BaddooEtAl23}
P.~J. Baddoo, B.~Herrmann, B.~J. McKeon, J.~N. Kutz, S.~L. Brunton, Physics-informed dynamic mode decomposition, Proc. R. Soc. A 479 (2023).
\newblock \href {https://doi.org/10.1098/rspa.2022.0576} {\path{doi:10.1098/rspa.2022.0576}}.

\bibitem{DasEtAl21}
S.~Das, D.~Giannakis, J.~Slawinska, Reproducing kernel {H}ilbert space compactification of unitary evolution groups, Appl. Comput. Harmon. Anal. 54 (2021) 75--136.
\newblock \href {https://doi.org/10.1016/j.acha.2021.02.004} {\path{doi:10.1016/j.acha.2021.02.004}}.

\bibitem{GiannakisValva24}
D.~Giannakis, C.~Valva, Consistent spectral approximation of {K}oopman operators using resolvent compactification, Nonlinearity 37~(7) (2024).
\newblock \href {https://doi.org/10.1088/1361-6544/ad4ade} {\path{doi:10.1088/1361-6544/ad4ade}}.

\bibitem{Koopman31}
B.~O. Koopman, Hamiltonian systems and transformation in {H}ilbert space, Proc. Natl. Acad. Sci. 17~(5) (1931) 315--318.
\newblock \href {https://doi.org/10.1073/pnas.17.5.315} {\path{doi:10.1073/pnas.17.5.315}}.

\bibitem{KoopmanVonNeumann32}
B.~O. Koopman, J.~von Neumann, Dynamical systems of continuous spectra, Proc. Natl. Acad. Sci. 18~(3) (1932) 255--263.
\newblock \href {https://doi.org/10.1073/pnas.18.3.255} {\path{doi:10.1073/pnas.18.3.255}}.

\bibitem{Stone32}
M.~H. Stone, On one-parameter unitary groups in {H}ilbert space, Ann. Math 33~(3) (1932) 643--648.
\newblock \href {https://doi.org/doi.org/10.2307/1968538} {\path{doi:doi.org/10.2307/1968538}}.

\bibitem{Schmudgen12}
K.~Schm{\"u}dgen, Unbounded Self-Adjoint Operators on {H}ilbert Space, Vol. 265 of Graduate Texts in Mathematics, Springer Science+Business Media, 2012.

\bibitem{Oliveira09}
C.~R. de~Oliveira, Intermediate Spectral Theory and Quantum Dynamics, Vol.~54 of Progress in Mathematical Physics, Birkh{\"a}user, Basel, 2009.

\bibitem{Chatelin11}
F.~Chatelin, Spectral Approximation of Linear Operators, Classics in Applied Mathematics, Society for Industrial and Applied Mathematics, Philadelphia, 2011.

\bibitem{SriperumbudurEtAl11}
B.~K. Sriperumbudur, K.~Fukumizu, G.~R. Lanckriet, Universality, characteristic kernels and {RKHS} embedding of measures, J. Mach. Learn. Res. 12 (2011) 2389--2410.

\bibitem{CoifmanHirn13}
R.~Coifman, M.~Hirn, Bi-stochastic kernels via asymmetric affinity functions, Appl. Comput. Harmon. Anal. 35~(1) (2013) 177--180.
\newblock \href {https://doi.org/10.1016/j.acha.2013.01.001} {\path{doi:10.1016/j.acha.2013.01.001}}.

\bibitem{PaulsenRaghupathi16}
V.~I. Paulsen, M.~Raghupathi, An Introduction to the Theory of Reproducing Kernel {H}ilbert Spaces, Vol. 152 of Cambridge Studies in Advanced Mathematics, Cambridge University Press, Cambridge, 2016.

\bibitem{SteinwartChristmann08}
I.~Steinwart, A.~Christmann, Support Vector Machines, Information Science and Statistics, Springer, New York, 2008.

\bibitem{BabuskaOsborn91}
I.~Babu\v{s}ka, J.~Osborn, Eigenvalue problems, in: P.~G. Ciarlet, J.~L. Lions (Eds.), Finite Element Methods (Part 1), Vol.~II of Handbook of Numerical Analysis, North-Holland, Amsterdam, 1991, pp. 641--787.

\bibitem{ReedSimon80}
M.~Reed, B.~Simon, Methods of Modern Mathematical Physics I: Functional Analysis, Academic Press, New York, 1980.

\bibitem{Blank17}
M.~Blank, Egodic averaging with and without invariant measures, Nonlinearity 30 (2017) 4649--4664.
\newblock \href {https://doi.org/10.1088/1361-6544/aa8fe8} {\path{doi:10.1088/1361-6544/aa8fe8}}.

\bibitem{VonLuxburgEtAl08}
U.~von Luxburg, M.~Belkin, O.~Bousquet, Consitency of spectral clustering, Ann. Stat. 26~(2) (2008) 555--586.
\newblock \href {https://doi.org/10.1214/009053607000000640} {\path{doi:10.1214/009053607000000640}}.

\bibitem{Oxtoby53}
J.~C. Oxtoby, Stepanoff flows on the torus, Proc. Amer. Math. Soc. 4 (1953) 982--987.

\bibitem{Lorenz63}
E.~N. Lorenz, Deterministic nonperiodic flow, J. Atmos. Sci. 20 (1963) 130--141.
\newblock \href {https://doi.org/10.1175/1520-0469(1963)020<0130:DNF>2.0.CO;2} {\path{doi:10.1175/1520-0469(1963)020<0130:DNF>2.0.CO;2}}.

\bibitem{KatokThouvenot06}
A.~Katok, J.-P. Thouvenot, Spectral properties and combinatorial constructions in ergodic theory, in: B.~Hasselblatt, A.~Katok (Eds.), Handbook of Dynamical Systems, Vol.~1B, North-Holland, Amsterdam, 2006, Ch.~11, pp. 649--743.

\bibitem{Kocergin73}
A.~V. Ko\v{c}ergin, Time changes in flows and mixing, Math. USSR Izv. 7~(6) (1973) 1273--1294.
\newblock \href {https://doi.org/10.1070/IM1973v007n06ABEH002087} {\path{doi:10.1070/IM1973v007n06ABEH002087}}.

\bibitem{Fayad02}
B.~Fayad, Analytic mixing reparametrizations of irrational flows, Ergod. Th. \& Dynam. Sys. 22 (2002) 437--468.
\newblock \href {https://doi.org/10.1017/s0143385702000214} {\path{doi:10.1017/s0143385702000214}}.

\bibitem{LawEtAl14}
K.~Law, A.~Shukla, A.~M. Stuart, Analysis of the {3DVAR} filter for the partially observed {L}orenz'63 model, Discrete Contin. Dyn. Syst. 34~(3) (2013) 1061--10178.
\newblock \href {https://doi.org/10.3934/dcds.2014.34.1061} {\path{doi:10.3934/dcds.2014.34.1061}}.

\bibitem{Sprott03}
J.~C. Sprott, Chaos and Time-Series Analysis, Oxford University Press, Oxford, 2003.

\bibitem{KordaEtAl20}
M.~Korda, M.~Putinar, I.~Mezi\'c, Data-driven spectral analysis of the {K}oopman operator, Appl. Comput. Harmon. Anal. 48~(2) (2020) 599--629.
\newblock \href {https://doi.org/10.1016/j.acha.2018.08.002} {\path{doi:10.1016/j.acha.2018.08.002}}.

\bibitem{ColbrookEtAl24}
M.~J. Colbrook, C.~Drysdale, A.~Horning, \href{https://arxiv.org/abs/2405.00782}{Rigged {D}ynamic {M}ode {D}ecomposition: {D}ata-driven generalized eigenfunction decompositions for {K}oopman operators} (2024).
\newline\urlprefix\url{https://arxiv.org/abs/2405.00782}

\bibitem{colbrook_limits_2024}
M.~J. Colbrook, I.~Mezić, A.~Stepanenko, \href{http://arxiv.org/abs/2407.06312}{Limits and {Powers} of {Koopman} {Learning}}, arXiv:2407.06312 [math] (Jul. 2024).
\newblock \href {https://doi.org/10.48550/arXiv.2407.06312} {\path{doi:10.48550/arXiv.2407.06312}}.
\newline\urlprefix\url{http://arxiv.org/abs/2407.06312}

\bibitem{Giannakis19}
D.~Giannakis, Data-driven spectral decomposition and forecasting of ergodic dynamical systems, Appl. Comput. Harmon. Anal. 47~(2) (2019) 338--396.
\newblock \href {https://doi.org/10.1016/j.acha.2017.09.001} {\path{doi:10.1016/j.acha.2017.09.001}}.

\bibitem{Folland16}
G.~B. Folland, A {Course} in {Abstract} {Harmonic} {Analysis}, Chapman and Hall/CRC, 2016.
\newblock \href {https://doi.org/10.1201/b19172} {\path{doi:10.1201/b19172}}.

\bibitem{BerryHarlim16}
T.~Berry, J.~Harlim, Variable bandwidth diffusion kernels, Appl. Comput. Harmon. Anal. 40~(1) (2016) 68--96.
\newblock \href {https://doi.org/10.1016/j.acha.2015.01.001} {\path{doi:10.1016/j.acha.2015.01.001}}.

\bibitem{CoifmanLafon06}
R.~R. Coifman, S.~Lafon, Diffusion maps, Appl. Comput. Harmon. Anal. 21 (2006) 5--30.
\newblock \href {https://doi.org/10.1016/j.acha.2006.04.006} {\path{doi:10.1016/j.acha.2006.04.006}}.

\bibitem{CoifmanEtAl08}
R.~R. Coifman, Y.~Shkolnisky, F.~J. Sigworth, A.~Singer, Graph {L}aplacian tomography from unknown random projections, IEEE Trans. Image Process. 17~(10) (2008) 1891--1899.
\newblock \href {https://doi.org/10.1109/tip.2008.2002305} {\path{doi:10.1109/tip.2008.2002305}}.

\end{thebibliography}

\end{document}